\newcommand{\C}{\ensuremath{\mathcal{C}}}
\newcommand{\sym}{\ensuremath{\mathrm{Sym}}}
\newcommand{\diam}{\ensuremath{\mathrm{Diam} \; }}
\newcommand{\graph}{\ensuremath{\C(G,X)}}
\newcommand{\hatgraph}{\ensuremath{\C(\hat G,\hat X)}}
\newcommand{\ep}{\varepsilon}
\newcommand{\zz}{\mathbb{Z}}
\newcommand{\rr}{\mathbb{R}}
\newcommand{\bu}{\mathbf{u}}
\newcommand{\bv}{\mathbf{v}}
\newcommand{\bw}{\mathbf{w}}
\newtheorem{thm}{Theorem}[section]
\newtheorem{lemma}[thm]{Lemma}
\newtheorem{prop}[thm]{Proposition}
\theoremstyle{definition}
\newtheorem{notation}[thm]{Notation}
\newtheorem{defn}[thm]{Definition}
\begin{document}
    \title{\Large\textbf{Commuting Involution Graphs for $\tilde C_n$}}
\date{}
      \author{Sarah Hart and Amal Sbeiti Clarke\thanks{\noindent Dept of Economics, Mathematics and
      Statistics,
Birkbeck College, Malet Street, London, WC1E 7HX. \hspace*{4mm}
s.hart@bbk.ac.uk,akasso2@mail.bbk.ac.uk }}
  \maketitle
 \vspace*{-10mm}
\begin{abstract}
\noindent In this article we consider commuting graphs of
involution conjugacy classes in the affine Weyl group of type $\tilde
C_n$. We show that where the graph is connected the diameter is at
most $n+2$.\\ MSC(2000): 20F55, 05C25, 20D60.
\end{abstract}
 \section{Introduction}
 Consider $X$ a subset of a group $G$. The elements of $X$ constitute the vertices of the  commuting graph \graph, in which $x, y \in X$ are joined by an edge whenever $ xy =  yx$. If $X$ is a set of involutions then we call \graph\ a commuting involution graph. Commuting graphs have been studied by many authors in various contexts. For example Fischer~\cite{cominvsym}  studied commuting involution graphs for the case when $X$ is a conjugacy class of involutions, in his work on  3-transposition groups. Segev and Seitz looked in \cite{segseitz} at commuting graphs for finite simple groups $G$ where $X$ consists of the non-identity elements of $G$. More recently Giudici and Pope~\cite{commuting} gave some results on bounding the diameters of commuting graphs of finite groups. Commuting graphs for elements of order 3 have been considered in \cite{athirah}; there have also been many papers dealing with the case where $X$ consists of all non-identity elements of a given group such as for example \cite{comalg}. \\
  
 In  \cite{cominfinite}, Bates et al looked into the commuting involution graph \graph\ where $X$ is a conjugacy class of involutions in $G$ and $G$ is $\sym(n)$. The remaining finite Coxeter groups were analysed in \cite{Finite}. Commuting involution graphs in affine Coxeter groups of type $\tilde A_n$ have been considered in \cite{Perkins}. In this article, we investigate the commuting involution graphs $C(G,X)$ for type $\tilde{C}_{n}$.\\
 
 For the rest of this article, let $G_n$ be an affine Weyl group of type $\tilde C_n$, for some $n\geq 2$, writing $G$ when $n$ is not specified, and let $X$ be a conjugacy class of involutions of $G$. We write $\diam \graph$ for the diameter of \graph\
 when \graph\ is a connected graph, in other words the maximum distance $d(x,y)$ between any $x, y \in X$ in the graph. Since conjugation by any group element induces a graph automorphism, we can determine the diameter by fixing any $a \in X$,  and then  $\diam \graph = \max\{d(x,a): x \in X\}$. Our main result is the following.
 
 \begin{thm}\label{summary} Let $X$ be a conjugacy class of involutions in the group $G_n$ of type $\tilde C_n$. If the commuting graph \graph\ is connected, then its diameter is at most $n+2$.
 \end{thm}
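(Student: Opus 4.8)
The plan is to work in a concrete model of $G_n$ and to read off involutions and commuting pairs combinatorially. I would realise $G_n$ as a semidirect product $W \ltimes L$, where $W$ is the finite Weyl group of type $C_n$ (the group of signed permutations of $\{1,\dots,n\}$) and $L$ is the translation lattice, writing a general element as $t_\lambda w$ with $w \in W$ and $\lambda \in L$. Since $(t_\lambda w)^2 = t_{\lambda + w\lambda}\, w^2$, an element is an involution exactly when $w^2 = 1$ and $w\lambda = -\lambda$, so the translation part of any involution lies in the $(-1)$-eigenspace of its linear part. I would then record, in a short commuting lemma, precisely when two such involutions commute, in terms of the commuting of their linear parts together with a compatibility condition on the translations. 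Because conjugation by any element of $G_n$ induces a graph automorphism of \graph, as noted in the introduction, it suffices to fix one convenient $a \in X$ and to bound $d(x,a)$ over all $x \in X$, so that $\diam \graph = \max\{d(x,a) : x \in X\}$.

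Next I would classify the conjugacy classes of involutions: each is determined by the $W$-class of the linear part $w$ (a product of reflections in mutually orthogonal roots, so indexed by how many coordinates are swapped positively, swapped negatively, or sign-changed) together with the orbit of an admissible translation $\lambda$ under the relevant twisted action. For the fixed class $X$ I would choose a standard representative $a$ whose linear part acts on an initial block of coordinates and whose translation is in a canonical form, keeping the $\supp$ of $a$ and the eigenspace carrying its translation as simple as possible.

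The heart of the argument is a path-construction that splits $n+2$ into an affine cost of at most $2$ and a finite cost of at most $n$. First I would show that from any $x \in X$ one can move, in at most two steps of the commuting graph, to an involution whose translation part is trivial, that is, one lying in a suitable conjugate of the finite parabolic $W \cong W(C_n)$; the idea is to pass through an auxiliary involution of $X$ that commutes with $x$ and ``absorbs'' the translation, exploiting the relation $w\lambda = -\lambda$. Arranging that $a$ itself already lies in such a finite parabolic, the problem reduces to bounding distances between two involutions inside the finite group of type $C_n$. There I would align the two involutions one coordinate-block at a time: each elementary move replaces a positive transposition, negative transposition, or sign-change of the current vertex by one agreeing further with $a$, while commuting with that vertex, so at most $n$ such moves suffice. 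Summing the two contributions yields $d(x,a) \le n+2$.

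The main obstacle, I expect, is controlling the translation lattice uniformly across all classes. The constraint $w\lambda = -\lambda$ means that the available translations, and hence the auxiliary involutions used in the affine step, depend sensitively on the type of $w$ (how many sign-changes versus signed transpositions it contains), so the affine reduction must be verified case by case, including the awkward pure-sign-change classes and the small-rank base cases. One must also check throughout that each proposed neighbour genuinely lies in $X$ and genuinely commutes with the current vertex, and take care at the boundary between connected and disconnected classes so that the bound is asserted only where \graph\ is connected. Finally, tight bookkeeping is needed to keep the finite cost at $n$ rather than a larger multiple, so that the total comes out to be exactly $n+2$.
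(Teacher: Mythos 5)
The decomposition you propose --- an ``affine cost'' of at most $2$ to reach a translation-free involution, followed by a ``finite cost'' of at most $n$ inside $W(C_n)$ --- does not work, and the failure is not a matter of bookkeeping. The obstruction is Lemma \ref{1cycle} of the paper: a negative $1$-cycle $\overset{\overset{\lambda}{-}}{(\alpha)}$ commutes with $\overset{\overset{\mu}{-}}{(\alpha)}$ only when $\lambda=\mu$. Consequently, if $x$ has a negative $1$-cycle at position $\alpha$ with a ``wrong'' translation label, any neighbour of $x$ either carries the \emph{same} label at $\alpha$ or has moved $\alpha$ into a fixed point or a transposition. Since the number $l$ of fixed points and the number $m$ of transpositions are class invariants, each step of a path can alter the translation label at only about $l+2m$ positions. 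When $m=0$ and $l=1$ (a connected class whenever both $k_e,k_o\ge 1$ or $k_e k_o = 0$ with $n$ large), the paper's Theorem \ref{disc} shows the diameter is $n+1$ or $\lceil n/l\rceil$: the translations genuinely cost $\Theta(n)$ steps to repair and cannot be ``absorbed'' in two moves. Your proposed split would give a bound of roughly $2+(\text{finite diameter})+O(1)$, and for $m=0$ the finite graph is complete (Theorem \ref{finite}(i)), so your argument would yield a constant bound --- contradicting the actual diameter $n+1$. The costs in your decomposition are, in effect, the wrong way round: in $\tilde C_n$ it is the affine (translation) data, not the underlying signed permutation, that forces the diameter to grow linearly in $n$.

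A second, subtler issue is the conjugacy classification. Classes of involutions in $\tilde C_n$ are \emph{not} parametrised by the $W$-class of the linear part together with a lattice orbit in the naive sense you describe: the paper's Theorem \ref{conjclass} shows that the parity of the translation label on each negative $1$-cycle is a conjugacy invariant (the ``labelled cycle type'' $(m,k_e,k_o,l)$ splits a single $W$-class into several $G$-classes). Any correct argument must track this parity, since it governs which negative $1$-cycles can be paired into commuting transpositions (Lemma \ref{2cycle}(iii)) and is the source of several of the disconnected cases in Theorem \ref{main}. The paper's actual proof replaces your two-phase reduction with an induction on $n$ (deleting a cycle already in agreement with $a$ and passing to $G_{n-1}$ or $G_{n-2}$), together with an explicit $\Theta(n/l)$-step relabelling procedure for the negative $1$-cycles; that is where the $n$ in the bound $n+2$ really comes from.
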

 
 The proof of Theorem \ref{summary} is broken into several cases depending on the type of the conjugacy class. To describe this further, and in order to state our results about connectedness, we need to describe a parameterisation of the involution conjugacy class. We will explain in Section~2 that involutions in $G$ can be written as `labelled permutations' (Notation \ref{notation}). These are permutations expressed as products of disjoint cycles in which every cycle has a sign and an integer written above it. For example in $G_{10}$ one of the involutions is ${\stackrel{\stackrel{0}{+}}{(1 2)}}{\stackrel{\stackrel{0}{+}}{(3 4)}}\stackrel{\stackrel{3}{-}}{(5)} \stackrel{\stackrel{4}{-}}{(6)}
          \stackrel{\stackrel{3}{-}}{(7)} 
          \stackrel{\stackrel{1}{-}}{(8)} 
                    \stackrel{\stackrel{0}{-}}{(9)} 
                              \stackrel{\stackrel{0}{+}}{(10)}$. A cycle is positive if it has a plus sign, and negative if it has a minus sign. The {\em labelled cycle type} of an involution $\sigma$ will be a quadruple $(m,k_e,k_o,l)$ where $m$ is the number of transpositions, $k_e$ is the number of negative $1$-cycles with an even number above them, $k_o$ is the number of negative 1-cycles with an odd number above them, and $l$ is the number of positive 1-cycles. For the example in $G_{10}$, its labelled cycle type is $(2,2,3,1)$.\\

 We will show (Theorem \ref{conjclass}) that involution conjugacy classes in $G$ are parametrised by labelled cycle type. We may now give the conditions under which the graph \graph\ is connected.
 
 \begin{thm}\label{main}   
 Let $X$ be a conjugacy class of involutions in $G_n$ (where $n \geq 2$) with labelled cycle type $(m,k_e,k_o,l)$. Then \graph\ is disconnected in each of the following cases.
           \begin{trivlist}
    \item[(\textit{i})] $m=0$ and $l=0$;
          \item[(\textit{ii})] $m > 0$, $l=0$ and either $k_e = 1$ or $k_o=1$;
         \item[(\textit{iii})] $m > 0$ and $\max(k_e, k_o, l) = 1$;
         \item[(\textit{iv})] $n=4$ and $m=1$; 
     \item[(\textit{v})] $n=6$, $m=1$ and $k_e =k_o=2$.  
        \end{trivlist}   
 In all other cases, \graph\ is connected.\end{thm}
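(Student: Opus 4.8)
The plan is to fix, inside each class $X$ of labelled cycle type $(m,k_e,k_o,l)$, a convenient base involution $a$ and to use two standard reductions. Since conjugation by any $g\in G_n$ induces a graph automorphism and $G_n$ acts transitively on $X$ (Theorem~\ref{conjclass}), $\graph$ is connected iff every $x\in X$ lies in the connected component of $a$. Writing each involution in the labelled-permutation form of Notation~\ref{notation} as a pair $(\pi,v)$, with $\pi$ a signed involution and $v\in\zz^n$ the translation vector, two involutions $x=(\pi,v)$ and $y=(\rho,v')$ are adjacent iff $xy$ is again an involution, which in coordinates reads $\pi\rho=\rho\pi$ together with $(\pi-I)v'=(\rho-I)v$. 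I would read off both the obstructions to connection and the available moves from this description, noting that the class fixes $m,k_e,k_o,l$, so that the only freedom inside $X$ is the assignment of the four cycle-roles to the $n$ coordinates and the values of the integer labels within their parity cosets.

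For the five disconnected cases I would exhibit, in each, a map $f\colon X\to A$ into a small abelian group that is constant along every edge yet non-constant on $X$. Case~(\textit{i}) is immediate and extreme: when $m=l=0$ every element is $(-I,v)$, and a one-line computation gives $(-I,v)(-I,v')=(I,v-v')$, so two such commute only if $v=v'$; hence $\graph$ is edgeless and, as $|X|>1$ for $n\ge2$, disconnected. For~(\textit{ii}) and~(\textit{iii}) the obstruction is a $\zz/2$-quantity built from the parities of the negative $1$-cycle labels and the coordinates occupied by transpositions: when some kind of $1$-cycle occurs only once (a lone even or lone odd negative cycle with $l=0$, or $\max(k_e,k_o,l)=1$) the commuting condition above leaves this quantity unchanged along every edge, while it takes both values on $X$. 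Cases~(\textit{iv}) $n=4,m=1$ and~(\textit{v}) $n=6,m=1,k_e=k_o=2$ are genuinely sporadic; here $X$ is small and I would verify the splitting into at least two components directly, reading off the separating invariant by hand.

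For the remaining (connected) cases I would assemble a short list of elementary commuting moves inside $X$, each given by an explicit small involution of the correct type commuting with the current one: relocating a transposition across a pair of coordinates, transporting a $1$-cycle past a transposition, and interchanging the coordinates of two like $1$-cycles while reconciling their labels. The hypotheses defining the non-exceptional cases are exactly what is needed to apply these: one requires $\max(k_e,k_o,l)\ge2$, and when $l=0$ also $k_e\neq1$ and $k_o\neq1$, so that a matched pair of cycles of a common kind is always available as a ``helper''; a positive $1$-cycle ($l\ge1$) is especially flexible and can substitute for a second like negative cycle, which is why the $l=0$ restriction is stronger. I would then show, by induction on $n$ --- peeling off a transposition or a matched pair of $1$-cycles and passing to a smaller, still non-exceptional class --- that these moves carry an arbitrary $x$ to the base $a$, checking the low-rank base cases directly.

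The main obstacle I anticipate is pinning down the connectivity threshold precisely and separating it cleanly from the sporadic failures. The generic moves degenerate exactly when a parameter drops to $1$ or when the whole class is too small, so the delicate points are (a) proving that $\max(k_e,k_o,l)\ge2$ together with the $l=0$ caveat genuinely suffices, and (b) showing that the borderline families in~(\textit{ii})--(\textit{v}), and in particular the two sporadic classes~(\textit{iv}) and~(\textit{v}), really break into components rather than merely resisting the generic argument. This is where I expect the explicit invariant constructions and the finite case-checks to carry the real weight, and where the induction must be set up so as never to drop into an exceptional smaller class.
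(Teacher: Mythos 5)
Your outline correctly identifies the two-sided structure of the argument (edge-invariants for the five disconnected families, explicit commuting moves plus induction on $n$ for connectedness), and your treatment of case (\textit{i}) is complete and matches the paper. But as it stands the proposal has genuine gaps on both sides. For disconnectedness: your description of the invariant in cases (\textit{ii}) and (\textit{iii}) as ``a $\zz/2$-quantity built from parities'' is too weak. The paper's actual invariant for (\textit{ii}) is sharper: when $l=0$ and, say, $k_o=1$, every element in the component of $a$ must contain the \emph{exact} labelled cycle $\overset{\overset{1}{-}}{(n)}$ (both the position $n$ and the label $1$ are frozen), which follows from Lemma~\ref{1cycle} and Lemma~\ref{2cycle}(iii); a mere parity count would not separate components, since every element of $X$ has the same number of odd-labelled negative $1$-cycles by definition of the class. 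More seriously, your claim that in cases (\textit{iv}) and (\textit{v}) ``$X$ is small'' and can be ``verified directly'' is false: these classes are infinite (the labels range over infinitely many integers), so no finite check is available. The paper handles (\textit{v}) with a genuine invariant --- the partition $T(x)$ of $\{1,\ldots,6\}$ into the three pairs determined by the transposition and the two like-parity pairs of negative $1$-cycles, shown to be edge-constant via Lemma~\ref{2cycle} --- and handles (\textit{iv}) by projecting to the finite Weyl group $B_4$ and quoting the known disconnectedness there (Theorem~\ref{finite}(v) together with Lemma~\ref{easy}, i.e.\ the observation that disconnectedness of $\hatgraph$ forces disconnectedness of $\graph$). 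That projection lemma is absent from your plan and you would need either it or a replacement.

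On the connectedness side the gap is larger: the statement ``I would then show, by induction on $n$, that these moves carry an arbitrary $x$ to the base $a$'' is precisely the content that occupies most of Section~\ref{sec3} of the paper (Theorems~\ref{disc}, \ref{connectm=1l=0}, \ref{m>0l>0} and the surrounding lemmas), and it does not follow formally from the availability of a ``matched pair of helper cycles.'' The induction repeatedly threatens to land in one of the exceptional smaller classes (e.g.\ peeling two coordinates off an $n=8$, $m=1$, $k_e=k_o=3$ class can drop you into the disconnected $n=6$, $m=1$, $k_e=k_o=2$ class), and the paper must route around this with bespoke paths in the base cases $n=5,6,7,8$ (Lemmas~\ref{n=5} and \ref{n=6}, and the $n=7,8$ subcases inside Theorem~\ref{m>0l>0}). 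You flag this danger yourself in your final paragraph, but flagging it is not resolving it; without the explicit base-case constructions the inductive scheme is not yet a proof.
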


We note that the bound on diameter given in Theorem \ref{summary} is best possible. For example we have verified that when $n = 8$, $m=1$ and $k_e = k_o = 3$, the commuting involution graph has diameter 10. In Section \ref{sec2} we will establish notation and describe the conjugacy classes of involutions in $G$. Section 3 is dedicated to proving the main theorems. In Section 4 we give examples of selected commuting involution graphs. 
         
\section{Involution conjugacy classes in $G$}\label{sec2}

For the remainder of this paper $a$ is a fixed involution with $X$ its conjugacy class in the affine irreducible Coxeter group $G$. Our first job is to establish what involutions in $G$ look like.\\

Let $W$ be a finite Weyl group with root system $\Phi$ and $\Phi^{\vee}$ the set of coroots. The affine Weyl group $\tilde W$ is the semidirect product of $W$ with translation group $Z$ of the coroot lattice $L(\Phi^{\vee})$  of $W$. See, for example, \cite[Chapter 4]{humphreys} for more detail. For $\alpha, \tau \in W$ and $\bu, \bv \in Z$ we have $$(\alpha,\bv)(\tau,\bu) = (\alpha\tau,\bv^\tau + \bu).$$

For the rest of the paper $W_n$ will denote a Coxeter group of type $C_n$, and set $G_n = \tilde W_n$. We write, respectively, $W$ and $G$ whenever it is not necessary to specify $n$. We may take the roots of $W$ to be $\pm 2e_i$ and $\pm e_i \pm e_j$, for $1 \leq i \leq n$, where $\{e_1, \ldots, e_n\}$ is the standard orthonormal basis for $\rr^n$. The coroots are then $\pm e_i$ and $\pm e_i \pm e_j$. Therefore in this case $$Z  = \{(\lambda_1, \ldots, \lambda_n): \lambda_i \in \zz\}\cong \zz^n.$$ 
We may view the elements of $W$ as signed permutations; they act on $\rr^n$ by permuting the subscripts of basis vectors and changing their signs.  To obtain a signed permutation we write a
 permutation in Sym($n$) (including 1-cycles), add a plus sign or a
 minus sign above each $i$, and say $i$ is positive or negative
 accordingly. We adopt the convention of reading the sign first;
 that is, if $w =(\stackrel{-}{1}\hspace{0.2cm} \stackrel{+}{2}\hspace{0.2cm}\stackrel{-}{3} ) \in W$,
  then $1^w=-2$ , $2^w=3$ and $3^w=-1$.\\

   Expressing $\sigma$ as a product of disjoint cycles, we say that a
   cycle $(i_{1}\cdots i_{r})$ of $\sigma$ is {\em positive} if there is
   an even number of minus signs above its elements, and {\em
   negative} if the number of minus signs is odd. For example, $ (\stackrel{+}{1}\hspace{0.2cm} \stackrel{+}{3}\hspace{0.2cm}\stackrel{-}{2} )$    is a negative cycle, whereas $(\stackrel{-}{5}\;\stackrel{-}{6})$ is
   positive. It is straightforward to check that an
   involution of $W$ only has 1-cycles (positive or
   negative) and positive 2-cycles. By the definition of group multiplication in $G_n$, we see that
     the element $(\sigma, \mathbf{v})$ of $G$ is an involution
     precisely when $(\sigma^2, \mathbf{v}^{\sigma} + \mathbf{v}) =(1,\mathbf{0})$. This allows us to characterise the involutions in $G_n$.

  \begin{lemma} \label{invn}
  A non-identity element $(\sigma, \mathbf{v})$ of $G_n$ is an involution if and only if $\sigma$, when expressed as a product of disjoint signed cycles, has the form
        \[\sigma =(\stackrel{+}{a_1} \overset{+}{b_1})\cdots 
        (\stackrel{+}{a_t} \overset{+}{b_t}) (\stackrel{-}{a_{t+1}} \overset{-}{b_{t+l}}) \cdots (\stackrel{-}{a_m} \overset{-}{b_m}) 
      \stackrel{-}{(c_{2m+1})} \cdots \stackrel{-}{(c_{n-l})} (\stackrel{+}{d_{n-l+1}})\cdots\stackrel{+}{(d_n)}\]
        for some $a_i, b_i, c_i, d_i, t, m$ and $l$; and, writing $\bv = (v_1, \ldots, v_n)$, we have   
       $v_{b_i} = -v_{a_i}$ when $\sigma$ contains  $(\stackrel{+}{a_i} \overset{+}{b_i})$, $v_{b_i} = v_{a_i}$ when $\sigma$ contains $(\stackrel{+}{a_i} \overset{+}{b_i})$ and $v_{d_i} = 0$ for $n-l < i \leq n$.
       \end{lemma}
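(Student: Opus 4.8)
The plan is to start from the criterion already recorded just before the statement: by the multiplication rule, $(\sigma,\bv)$ is an involution exactly when $(\sigma^2,\bv^\sigma+\bv)=(1,\mathbf{0})$, i.e.\ when $\sigma^2=1$ in $W_n$ and $\bv^\sigma=-\bv$. These two conditions are independent, so the argument splits into a permutation part (determining the shape of $\sigma$) and a lattice part (determining the constraints on $\bv$). The text has already observed that an involution of $W_n$ consists only of $1$-cycles and positive $2$-cycles, so the bulk of the work is to translate $\bv^\sigma=-\bv$ into the coordinate equations in the statement.

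First I would pin down the cycle structure. Since $\sigma$ is a product of disjoint cycles acting on disjoint blocks of coordinates, \emph{both} conditions $\sigma^2=1$ and $\bv^\sigma=-\bv$ decouple completely across cycles, so it suffices to treat one cycle at a time. For a $2$-cycle $(\stackrel{\epsilon}{a}\,\stackrel{\delta}{b})$ a direct computation gives $a^{\sigma^2}=\epsilon\delta\,a$, so $\sigma^2=1$ forces $\epsilon=\delta$; hence every $2$-cycle of an involution has its two entries of the same sign, and this is precisely why the two displayed forms $(\stackrel{+}{a}\,\stackrel{+}{b})$ and $(\stackrel{-}{a}\,\stackrel{-}{b})$ (both positive cycles in the even-number-of-minus-signs sense) exhaust the possibilities, while $1$-cycles may be either sign. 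This recovers the claimed form of $\sigma$.

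Next I would impose $\bv^\sigma=-\bv$ block by block. Writing the coordinate action explicitly, if $i^\sigma=\epsilon_i\pi(i)$ then the $k$-th coordinate of $\bv^\sigma$ equals $\epsilon_{\pi^{-1}(k)}\,v_{\pi^{-1}(k)}$, and I would read off each case. A positive $2$-cycle $(\stackrel{+}{a}\,\stackrel{+}{b})$ gives $(\bv^\sigma)_a=v_b$ and $(\bv^\sigma)_b=v_a$, so $\bv^\sigma=-\bv$ yields $v_b=-v_a$. A negative $2$-cycle $(\stackrel{-}{a}\,\stackrel{-}{b})$ gives $(\bv^\sigma)_a=-v_b$ and $(\bv^\sigma)_b=-v_a$, forcing $v_b=v_a$. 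A negative $1$-cycle $\stackrel{-}{(c)}$ gives $(\bv^\sigma)_c=-v_c$, so the equation reads $-v_c=-v_c$ and imposes nothing. A positive $1$-cycle $\stackrel{+}{(d)}$ gives $(\bv^\sigma)_d=v_d$, whence $v_d=-v_d$ and so $v_d=0$. Assembling these across all cycles produces exactly the list of conditions stated.

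The main obstacle is bookkeeping rather than conceptual: fixing the direction of the action $\bv^\sigma$ and the placement of the sign $\epsilon_i$, since a misplaced sign would interchange the two $2$-cycle conditions or spuriously constrain the negative $1$-cycles. The one genuinely noteworthy point to flag is that the negative $1$-cycles impose \emph{no} condition (their coordinates $v_{c_i}$ are free), which is why the $c_i$ do not appear among the equations; I would verify this explicitly so the absence of a constraint is seen to be deliberate. I would also note, to reconcile with the statement, that the second occurrence of $(\stackrel{+}{a_i}\,\overset{+}{b_i})$ there should read $(\stackrel{-}{a_i}\,\overset{-}{b_i})$, matching the negative $2$-cycle computation above.
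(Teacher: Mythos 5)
Your proposal is correct and follows essentially the same route as the paper: both reduce to the criterion $(\sigma^2,\bv^\sigma+\bv)=(1,\mathbf{0})$, use the fact that involutions of $W$ have only $1$-cycles and positive $2$-cycles, and then read off the conditions on $\bv$ coordinate-by-coordinate for each cycle type, including the observation that negative $1$-cycles impose no constraint. Your extra explicit sign computation for $2$-cycles (where the paper simply cites the known cycle structure of involutions in $W$) and your note that the statement's second $(\stackrel{+}{a_i}\,\overset{+}{b_i})$ is a typo for $(\stackrel{-}{a_i}\,\overset{-}{b_i})$ are both accurate but do not change the argument.
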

       \begin{proof} 
   If $a=(\sigma,\mathbf{v})$ is an involution, then 
 $a^2=(\sigma^2, \mathbf{v}^{\sigma} + \mathbf{v}) =(1,\mathbf{0})$. Thus $\sigma$ is an involution of $W$, and so its cycles are all either 1-cycles or positive 2-cycles. Thus $\sigma$ has the form given in the statement of the lemma.
 Write $\mathbf{v}^{\sigma} + \mathbf{v} = (u_1, \ldots, u_n)$. We must have $u_r =0$ for all $r$. For $1 \leq i \leq t$, we have $u_{a_i} = u_{b_i} = v_{b_i} + v_{a_i}$. Thus $v_{b_i} = -v_{a_i}$. For $t < i \leq m$, we have $u_{a_i} = -v_{b_i} + v_{a_i}$ and $u_{b_i} = -v_{a_i} + v_{b_i}$. So $v_{b_i} = v_{a_i}$. For $m < i < n-l$ we have $u_{c_i} = -u_{c_i} + u_{c_i} = 0$, so there is no restriction on $u_{c_i}$. For $n-l < i < n$ we have $u_{d_i} = 2v_{d_i}$, forcing $v_{d_i} = 0$. These are the necessary conditions on $\sigma$ and $\bv$; it is clear that they are also sufficient.
        \end{proof}

 \begin{notation}
\label{notation} 
 From now on we will employ a shorthand for writing involutions $(\sigma,\bv)$ of $G$: if $\bv = (v_1, \ldots, v_n)$, then above each signed number $i$ in the expression of $\sigma$ as a product of disjoint signed cycles, we will write $v_i$. However for transpositions $(\overset{\pm}{a}\; \overset{\pm}{b})$ of $\sigma$, where the number above $a$ determines the number above $b$ as described in Lemma \ref{invn}, we write $(\overset{\stackrel{\lambda}{+}}{a b})$
 for $(\overset{\stackrel{\lambda}{+}}{a} \overset{\overset{-\lambda}{+}}{b})$ and $(\overset{\stackrel{\lambda}{-}}{a b})$
  for $(\overset{\stackrel{\lambda}{-}}{a} \overset{\overset{\lambda}{-}}{b})$. We will call this the {\em labelled cycle form} of $a$. Where it is helpful, we adopt the convention that cycles $(\overset{\overset{0}{+}}{a})$ are omitted, as these fix both $a$ and $v_a$. 
 \end{notation}
 
 \begin{defn}
 Let $a$ be an involution in $G_n$. The {\em labelled cycle type} of $a$ is the tuple $(m,k_e, k_o, l)$, where $m$ is the number of transpositions, $k_e$ is the number of negative 1-cycles with an even number above them, $k_o$ is the number of negative 1-cycles with an odd number above them, and $l$ is the number of positive 1-cycles (fixed points), in the labelled cycle form of $a$. 
 \end{defn}

For example, the labelled cycle type of ${\stackrel{\stackrel{0}{+}}{(1 2)}}\stackrel{\stackrel{1}{-}}{(3)}\stackrel{\stackrel{1}{-}}{(4)}\stackrel{\stackrel{3}{-}}{(5)} \stackrel{\stackrel{4}{-}}{(6)}
          \stackrel{\stackrel{3}{-}}{(7)} 
          \stackrel{\stackrel{1}{-}}{(8)} 
                    \stackrel{\stackrel{0}{+}}{(9)} 
                              \stackrel{\stackrel{0}{+}}{(10)}$ is $(1,1,5,2)$.\\
                              
Having characterised the involutions, we must now determine the conjugacy classes. 
A well-known result, due to Richardson, gives a description of involution conjugacy classes in Coxeter groups.  
 
 \begin{defn} \label{equiv}
   Let $W$ be an arbitrary Coxeter group, with $R$ the set of fundamental reflections. We say that two subsets $I$ and $J$ of $R$ are $W$-equivalent
   if there exists $w \in W$ such that $I^w = J$.
   \end{defn} 
 
 In the next result, we use the notation $w_{I}$ for the longest element of a finite standard parabolic subgroup $W_I$. 
 
  \begin{thm}[Richardson \cite{richardson}]\label{richardson}
    
   Let $W$ be an arbitrary Coxeter group, with $R$ the set of fundamental reflections. Let $g \in W$ be an
   involution. Then there exists $I \subseteq R$ such that $w_I$ is central in $W_I$, and $g$ is conjugate to
   $w_I$. In addition, for $I, J \subseteq R$, $w_I$ is conjugate to $w_J$ if and only if $I$ and $J$ are
   $W$-equivalent.
   \end{thm}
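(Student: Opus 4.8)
The plan is to treat the statement as two results: the \emph{existence} of a central $w_I$ conjugate to a given involution, and the \emph{classification} asserting that $w_I$ and $w_J$ are conjugate iff $I$ and $J$ are $W$-equivalent. I would work throughout in the geometric representation of $W$ on $V = \bigoplus_{s\in R}\rr\alpha_s$ equipped with the Tits form, and in the dual action of $W$ on the Tits cone $U$. For an involution $g$ the governing invariant is its $(-1)$-eigenspace $V^-_g = \{v \in V : gv = -v\}$, and the key geometric fact is that $w_I$ is central in $W_I$ precisely when $w_I$ acts as $-\mathrm{id}$ on $V_I := \mathrm{span}\{\alpha_s : s \in I\}$ and trivially on a complement, so that $V^-_{w_I} = V_I$.

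For existence, I would first reduce to a finite parabolic. Since $U$ is convex and $\langle g\rangle$-stable, the midpoint of any $x \in U$ and $gx$ is a $g$-fixed point of $U$; the stabiliser in $W$ of a point of $U$ is a finite standard parabolic subgroup up to conjugacy, so after conjugating we may assume $g$ lies in a finite standard parabolic $W_K$. Within the finite group $W_K$ the classical theory applies: $g$ is conjugate to the longest element $w_I$ of some $W_I$ with $I \subseteq K$, and by passing to the irreducible components on which $g$ acts as $-1$ one arranges that $w_I$ is central in $W_I$. This gives the first assertion.

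For the classification, one direction is formal: if $I^w = J$ then conjugation by $w$ is an automorphism of $(W,R)$ carrying the standard parabolic $W_I$ onto $W_J$, hence carrying longest element to longest element, so $w^{-1}w_Iw = w_J$. For the converse I would argue with eigenspaces. If $x^{-1}w_Ix = w_J$ then $V^-_{w_J} = x^{-1}V^-_{w_I}$, that is $V_J = x^{-1}V_I$; since $x \in W$ permutes the root system $\Phi$, it maps the root subsystem $\Phi_I = \Phi \cap V_I$ onto $\Phi_J = \Phi \cap V_J$, and so $x$ conjugates the reflection subgroups $W_I$ and $W_J$. Finally, because any two simple systems of a root subsystem are conjugate under its own reflection group, I can post-multiply $x$ by a suitable element of $W_J$ to obtain $w \in W$ carrying $\{\alpha_s : s\in I\}$ bijectively onto $\{\alpha_s : s \in J\}$, whence $I^{w^{-1}} = J$ and $I,J$ are $W$-equivalent.

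The main obstacle is this last step of the converse: conjugacy of the involutions only yields conjugacy of the \emph{reflection subgroups} $W_I$ and $W_J$, whereas $W$-equivalence demands that the \emph{specified simple systems} $I$ and $J$ be matched. Controlling this — and doing so for an arbitrary, in particular infinite, Coxeter group, where the Tits form is only nonnegative and the fixed-point and eigenspace arguments must be run inside the Tits cone rather than a Euclidean space — is where the real work lies, and is precisely the content imported from Richardson's analysis.
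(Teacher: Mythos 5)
First, a point of comparison: the paper does not prove this statement at all --- it is Richardson's theorem, quoted from \cite{richardson} and used as a black box --- so there is no in-paper argument to measure your sketch against. Richardson's own proof is in fact combinatorial (a descent argument on the length function), not the Tits-cone/eigenspace route you take; your geometric approach is the one usually run through Steinberg's fixed-point theorem.

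On the merits, your classification direction is essentially complete. (The identity $V^-_{w_I}=V_I$ is justified because every element of $W_I$ acts trivially on $V/V_I$, so a $(-1)$-eigenvector $v$ of $w_I$ satisfies $2v=v-w_Iv\in V_I$; this survives degeneracy of the Tits form.) But it silently uses the standing hypothesis that $w_I$ and $w_J$ are central in $W_I$ and $W_J$: without that the claim is false --- in $W=\mathrm{Sym}(3)$ the elements $w_{\{r_1\}}$ and $w_{\{r_1,r_2\}}$ are conjugate transpositions while $\{r_1\}$ and $\{r_1,r_2\}$ are not $W$-equivalent --- so the restriction must be made explicit (as it is in Richardson's paper). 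The existence direction has two genuine gaps. First, the stabiliser of an arbitrary point of the Tits cone need not be finite (the origin is fixed by all of $W$); you need to take $x$ in the \emph{interior} of the Tits cone, e.g.\ in the open fundamental chamber, where the midpoint $\tfrac{1}{2}(x+gx)$ still lies by convexity and where point stabilisers are finite parabolics. Second, and more seriously, the ``classical theory'' you invoke inside the finite parabolic $W_K$ \emph{is} the finite case of the theorem being proved, and your proposed mechanism of ``passing to the irreducible components on which $g$ acts as $-1$'' does not work: for a transposition $g$ in $\mathrm{Sym}(3)$ one has $g$ conjugate to the longest element of the whole irreducible group, which is a non-central involution, and no component of that group is one on which $g$ acts as $-\mathrm{id}$. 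The correct mechanism is to pass to the $(-1)$-eigenspace $V^-_g$, show that the pointwise stabiliser of its orthogonal complement is a parabolic subgroup (Steinberg) on whose root span $g$ acts as $-\mathrm{id}$, hence equals the central longest element of that parabolic, and then conjugate that parabolic to a standard one. With those repairs your outline is sound.
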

 
 The Coxeter graphs of $G_2 \cong \tilde{C_2}$ and $G_n\cong  \tilde{C_n}$, $n\geq 3$ are as follows. \\
          \unitlength 1.00mm
           \linethickness{0.4pt}
            \begin{picture}(58.00,20)(-10,0)
          \put(10,10.80){\line(1,0){12}}
          \put(10,9.00){\line(1,0){12}}
          \put(10,10.00){\circle*{2.00}}
          \put(22,10.00){\circle*{2.00}}
          \put(34,10.00){\circle*{2.00}}
          \put(10,14){\makebox(0,0)[cc]{$r_{1}$}}
          \put(22,14){\makebox(0,0)[cc]{$r_{2}$}}
          \put(32,14){\makebox(0,0)[cc]{$r_{3}$}}
          \put(-10,10){\makebox(0,0)[lc]{$\tilde C_2$}} \put(22,10.80){\line(1,0){12}}
          \put(22,9.00){\line(1,0){12}}
          \end{picture}
         \unitlength 1.00mm
         \linethickness{0.4pt}
         \begin{picture}(58.00,14)(-30,0)
         \put(10,10.80){\line(1,0){12}}
         \put(10,9.00){\line(1,0){12}}
         \put(22,10.00){\line(1,0){8}}
         \put(50,10.00){\line(1,0){8}}
         \put(58,10.80){\line(1,0){12}}
         \put(58,9.02){\line(1,0){12}}
         \put(58.00,10.00){\circle*{2.00}}
         \put(10,10.00){\circle*{2.00}}
         \put(22,10.00){\circle*{2.00}}
         \put(70,10.00){\circle*{2.00}}
         \put(10,14){\makebox(0,0)[cc]{$r_{1}$}}
         \put(22,14){\makebox(0,0)[cc]{$r_{2}$}}
         \put(58,14){\makebox(0,0)[cc]{$r_{n}$}}
         \put(70,14){\makebox(0,0)[cc]{$r_{n+1}$}}
         \put(-20,10){\makebox(0,0)[lc]  {$\tilde C_n (n\geq 3)$}}
         \put(35.00,10.00){\line(1,0){0.4}}
         \put(45.00,10.00){\line(1,0){0.4}}
         \put(40.00,10.00){\line(1,0){0.4}}
         \end{picture}  
         
  We may set $r_1 = \stackrel{\stackrel{0}{-}}{(1)}$, $r_i = (\overset{\stackrel{0}{+}}{i-1}\;\overset{\stackrel{0}{+}}{i})$ for $2 \leq i \leq n$, and $r_{n+1} = \stackrel{\stackrel{1}{-}}{(n)} $.   \\

It is well known that in the finite Coxeter group $W$ of type $C_n$, elements are conjugate if and only if they have the same signed cycle type. In particular two involutions are conjugate when they have the same number of transpositions, the same number of negative 1-cycles and the same number of positive 1-cycles. \\

In $G$ (which is of type $\tilde C_n$), the element $(\sigma,\mathbf{\bv})$ is conjugate to $(\tau,\mathbf{u})$ via some $(g,\mathbf{w})$ if and only if: \begin{align} (\tau,\mathbf{u}) &= (\sigma, \mathbf{v})^{(g,\mathbf{w})}\nonumber\\
         &= (g^{-1}\sigma g, \mathbf{v}^g + \mathbf{w} - \mathbf{w}^{g^{-1} \sigma g}).\label{eqconj}
         \end{align}

\begin{thm}\label{conjclass}
 Involutions in $G$ are conjugate if and only if they have the same labelled cycle type. In particular, every involution is conjugate to exactly one element $a = a_{m, k_e, k_o, l}$ of the form
$$a={\overset{\overset{0}{+}}{(1\; 2)}} \cdots \overset{\overset{0}{+}}{(2m-1\;2m)}
 \overset{\overset{0}{-}}{(2m+1)}\cdots \overset{\overset{0}{-}}{(2m+k_e)}
 \overset{\overset{1}{-}}{(2m+k_e+1)}\cdots 
 \overset{\overset{1}{-}}{(n-l)}\overset
 {\overset{0}{+}}{(n-l+1)}\cdots 
 \overset{\overset{0}{+}}{(n)}.$$
 \end{thm}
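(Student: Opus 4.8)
The plan is to prove the two directions separately: first that the labelled cycle type is invariant under conjugation, and then that every involution is conjugate to the displayed element $a_{m,k_e,k_o,l}$. Together these give the equivalence, since two involutions with the same labelled cycle type are both conjugate to the same $a_{m,k_e,k_o,l}$, while invariance forbids two involutions of different labelled cycle type from being conjugate. The only content beyond the finite theory is the split of the negative $1$-cycles into the two families counted by $k_e$ and $k_o$; everything else ($m$, $l$, and the total $k_e+k_o$) is already recorded by the signed cycle type of the permutation part, which is a conjugacy invariant in the finite group $W$ of type $C_n$.

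For invariance I would factor any conjugating element as $(g,\bw)=(g,\mathbf{0})(1,\bw)$ and use (\ref{eqconj}) to treat the two conjugations in turn. Conjugation by $(g,\mathbf{0})$ replaces $(\sigma,\bv)$ by $(g^{-1}\sigma g,\bv^{g})$; since $g^{-1}\sigma g$ has the same signed cycle type as $\sigma$, negative $1$-cycles are carried to negative $1$-cycles, and the relabelling $\bv\mapsto\bv^{g}$ merely permutes the entries of $\bv$ and may change some signs, so the label on each negative $1$-cycle is, up to sign, one of the original labels and its parity is unaffected. Conjugation by $(1,\bw)$ fixes $\sigma$ and replaces $\bv$ by $\bv+\bw-\bw^{\sigma}$; the key computation is that on a negative $1$-cycle $(c)$, where $\sigma$ sends $e_c\mapsto -e_c$, the correction $(\bw-\bw^{\sigma})_c$ equals $2w_c$, an even integer. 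Hence the multiset of parities of the negative $1$-cycle labels is preserved by both types of conjugation, so $k_e$ and $k_o$ are genuine invariants.

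For the existence of the canonical form I would first use $W$-conjugation to put the permutation part into the standard shape of $a_{m,k_e,k_o,l}$: the finite theory lets me move the $m$ positive $2$-cycles to positions $(1\,2),\dots,(2m-1\;2m)$, the $k_e+k_o$ negative $1$-cycles next, and the $l$ positive $1$-cycles last, and by permuting the negative $1$-cycles among themselves I can arrange that those with even label precede those with odd label. Tracking $\bv$ through these moves only permutes and resigns its entries, so the parities are placed as required. I would then use a single translation conjugation $(1,\bw)$ to clear the remaining labels: on the transposition $(2i-1\;2i)$ the correction $(\bw-\bw^{\sigma})_{2i-1}=w_{2i-1}-w_{2i}$ can be made equal to any integer, so each transposition label can be sent to $0$ (and the constraint $v_{2i}=-v_{2i-1}$ from Lemma~\ref{invn} is automatically preserved); on each negative $1$-cycle the correction $2w_c$ lets me subtract any even integer, reducing an even label to $0$ and an odd label to $1$; and positive $1$-cycles already carry label $0$ by Lemma~\ref{invn}. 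Because these coordinates are independent, all reductions can be carried out simultaneously, yielding exactly $a_{m,k_e,k_o,l}$.

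The main obstacle is the bookkeeping of signs in the two correction computations, since the conclusion hinges on the precise action $\bw\mapsto\bw^{\sigma}$ and on the conventions of (\ref{eqconj}); in particular the whole theorem turns on the fact that a negative $1$-cycle forces the correction $(\bw-\bw^{\sigma})_c$ to be even, whereas a transposition allows an arbitrary integer correction. Once these two computations are pinned down the argument closes routinely: invariance shows that distinct labelled cycle types cannot be conjugate, the canonical form shows that each class contains exactly one $a_{m,k_e,k_o,l}$, and the stated equivalence follows.
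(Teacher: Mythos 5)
Your proof is correct. The invariance half is essentially the paper's argument in different packaging: the paper checks conjugation by the simple reflections (which generate $G$) and extracts the identity $\mu = v_i + 2w_{i^g} \equiv \lambda \pmod 2$, whereas you factor an arbitrary conjugator as $(g,\mathbf{0})(1,\bw)$ and observe that the linear part only permutes and resigns labels while the translation part shifts the label on a negative $1$-cycle by the even number $2w_c$ --- the key parity computation is the same. The existence half is where you genuinely diverge. The paper invokes Richardson's theorem to reduce to longest elements $w_I$ of standard parabolic subgroups with components of type $A_1$ or $B_i$, classifies the possible $I$ up to $W$-equivalence, and then conjugates $w_I$ by an explicit signed permutation to reach $a_{m,k_e,k_o,l}$. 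You instead normalize directly: $W$-conjugation standardizes the signed permutation part (with parities of labels surviving because $\bv \mapsto \bv^g$ only permutes and resigns entries), and a single translation then clears the labels, exploiting that the correction on a transposition, $w_{2i-1}-w_{2i}$, is an arbitrary integer while on a negative $1$-cycle it is an arbitrary even integer $2w_c$, these corrections being independent across disjoint cycles. Your route is more elementary and self-contained, never needing Richardson, at the cost of verifying the normalization by hand; the paper's route gets a class representative from general theory but still performs a comparable explicit conjugation to reach the stated form. Both close identically: uniqueness of the representative follows from invariance.
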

 \begin{proof} Let $x$ be an involution of $G$. By Theorem \ref{richardson},  $x$ is conjugate to
 $w_I$ for some finite standard parabolic
 subgroup $W_I$ of $G$ in which $w_I$ is central. Therefore the connected components of the Coxeter graph for $W_I$ are of types $A_1$ or $B_i$ for some $i$ (including, by a slight abuse of notation, $B_1$, where we have connected components with just the vertex $r_1$ or $r_{n+1}$). Thus $I = \{r_1, r_2, \ldots, r_{i}\} \cup J \cup \{r_{j+1}, r_{j+2}, \ldots, r_{n+1}\}$ for some $i, j$ with  $0 \leq i < j\leq n+1$, where $J$ is a subset of $\{r_{i+2}, \ldots, r_{j-1}\}$ no two elements of which are adjacent vertices in the Coxeter graph. By conjugation in $\langle r_{i+2}, \ldots, r_{j-1}\rangle$ (which after all is isomorphic to the symmetric group $\sym(j-i-1)$), we can assume that for some $i, j$ and $m$ with $0 \leq i \leq i+2m < j < n+1$ we have $$I = \{r_1, r_2, \ldots, r_{i}\} \cup \{r_{i+2}, r_{i+4}, \ldots, r_{i+2m}\} \cup \{r_{j+1}, r_{j+2}, \ldots, r_{n+1}\}.$$ 
 This gives that $x$ is conjugate to $w_I$, where
 $$w_I =   \overset{\overset{0}{-}}{(1)}\cdots \overset{\overset{0}{-}}{(i)}{\overset{\overset{0}{+}}{(i + 1\;\; i+2)}} \cdots \overset{\overset{0}{+}}{(i + 2m-1\;\;\;i + 2m)}
  \overset{\overset{0}{+}}{(i+2m+1)}\cdots 
  \overset{\overset{0}{+}}{(j-1)}\overset
  {\overset{1}{-}}{(j)}\cdots 
  \overset{\overset{1}{-}}{(n)}.$$
  Let $c = (h, \mathbf{0})$, where
\begingroup\makeatletter\def\f@size{10}\check@mathfonts
$$ h = (\overset{+}{1}\;\; \overset{+}{i+1})(\overset{+}{2}\;\; \overset{+}{i+2})\cdots (\overset{+}{2m}\;\; \overset{+}{i+2m})(\overset{+}{i+2m+1}\;\;\; \overset{+}{j})\;(\overset{+}{i+2m+2} \;\;\;\overset{+}{j+1})\cdots (\overset{+}{i+2m+1+n-j}\;\;\; \overset{+}{n}).$$ \endgroup  
From Equation \eqref{eqconj} we see that 
 $$w_I^c ={\overset{\overset{0}{+}}{(1\; 2)}} \cdots \overset{\overset{0}{+}}{(2m-1\;2m)}
  \overset{\overset{0}{-}}{(2m+1)}\cdots \overset{\overset{0}{-}}{(2m+k_e)}
  \overset{\overset{1}{-}}{(2m+k_e+1)}\cdots 
  \overset{\overset{1}{-}}{(n-l)}\overset
  {\overset{0}{+}}{(n-l+1)}\cdots 
  \overset{\overset{0}{+}}{(n)}.$$
  Therefore, by setting $a = w_I^c$ we see that each involution in $G$ is conjugate to at least one element of the required form.\\ 
  
  Now consider an involution $x$ in $G$ with labelled cycle type $(m,k_e,k_o,l)$, and suppose $y = r_ixr_i^{-1}$ for some simple reflection $r_i$. Write $x = (\sigma, \bv)$ and $y = (\tau, \bu)$. By Equation \eqref{eqconj},  $\tau$ is conjugate to $\sigma$ in the underlying Weyl group $W$. Hence $\tau$ and $\sigma$ have the same number of transpositions, negative 1-cycles and positive 1-cycles as each other. In other words, the labelled cycle type of $y$ is $(m,k'_e, k'_o,l)$ for some $k'_e, k'_o$ satisfying $k'_e + k'_o = k_e + k_o$. Now, from Equation \eqref{eqconj} again, $(\overset{\overset{\lambda}{-}}{i})$ is a labelled 1-cycle of $x$ if and only if $(\overset{\overset{\mu}{-}}{i^g})$ is a labelled 1-cycle of $y$, where $\lambda = v_i$ and $\mu = u_{i^g}$.  In particular this means $(\overset{-}{i^g})$ is a signed cycle of $\tau$, so that $(w^{\tau})_{i^g} = -w_{i^g}$.
  Now $\bu = \bv^g + \bw - \bw^{\tau}$. So $
\mu = u_{i^g} = (v^g)_{i^g} + w_{i^g} - (w^\tau)_{i^g} = v_i + 2w_{i^g} \equiv \lambda \mod{2}$.
  Therefore $k_o = k'_o$. Hence $k_e = k'_e$ and so $x$ and $y$ have the same labelled cycle type. In particular $x$ is conjugate to at most one element $a$ of the form stated in the theorem. Conversely, any two involutions of the same labelled cycle type $(m,k_e,k_o,l)$ are both conjugate to $a_{m,k_e,k_o,l}$, and hence to each other. Thus conjugacy is parameterised by labelled cycle type, and the set of elements $\{a_{m,k_e,k_o,l} : 2m + k_e + k_o + l = n\}$ contains exactly one representative of each conjugacy class of involutions in $G_n$. \end{proof}

%
  We next prove three preliminary lemmas which will be used repeatedly in the proofs in Section~\ref{sec3}.

\begin{lemma}\label{1cycle}
Let $\alpha \in \{1, \ldots, n\}$ and $\lambda, \mu \in \zz$. Then $\stackrel{\stackrel{\lambda}{-}}{(\alpha)}$ commutes with $\stackrel{\stackrel{0}{+}}{(\alpha)}$ for all $\lambda$, whereas
$\stackrel{\stackrel{\lambda}{-}}{(\alpha)}$ commutes with  $\stackrel{\stackrel{\mu}{-}}{(\alpha)}$ if and only if $\lambda = \mu$. 
\end{lemma}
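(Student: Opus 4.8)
The plan is to translate the labelled-cycle notation into the $(\sigma,\mathbf{v})$ presentation of $G$ and then compute products directly with the multiplication rule $(\alpha,\mathbf{v})(\tau,\mathbf{u}) = (\alpha\tau,\mathbf{v}^\tau + \mathbf{u})$. Writing $s$ for the signed permutation that sends $e_\alpha \mapsto -e_\alpha$ and fixes every other basis vector, the negative $1$-cycle $\stackrel{\stackrel{\lambda}{-}}{(\alpha)}$ is the element $(s,\lambda e_\alpha)$, where $\lambda e_\alpha$ denotes the vector with $\lambda$ in position $\alpha$ and $0$ elsewhere. Two facts are worth recording at the outset: $s^2 = 1$, and the action of $s$ on a vector negates its $\alpha$-component, so that $(\lambda e_\alpha)^s = -\lambda e_\alpha$.

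For the first assertion I would observe that $\stackrel{\stackrel{0}{+}}{(\alpha)}$ is a positive $1$-cycle carrying the label $0$, hence the element $(1,\mathbf{0})$, which is the identity of $G$; this is precisely the convention (noted in Notation~\ref{notation}) that such cycles may be omitted. Since the identity commutes with every element of $G$, it commutes in particular with $\stackrel{\stackrel{\lambda}{-}}{(\alpha)}$ for every $\lambda$, and nothing further is required.

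The substance lies in the second assertion. Here I would compute both orders of the product of $(s,\lambda e_\alpha)$ and $(s,\mu e_\alpha)$. Because $s^2 = 1$ and $s$ negates the $\alpha$-component, the multiplication rule yields $(s,\lambda e_\alpha)(s,\mu e_\alpha) = (1,(\mu-\lambda)e_\alpha)$ and, symmetrically, $(s,\mu e_\alpha)(s,\lambda e_\alpha) = (1,(\lambda-\mu)e_\alpha)$. These two elements of $G$ agree exactly when $(\mu-\lambda)e_\alpha = (\lambda-\mu)e_\alpha$, that is when $2(\lambda-\mu) = 0$.

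The only genuinely delicate point is the ``only if'' direction, and it rests on the translation lattice being torsion-free: since $Z \cong \zz^n$ has no $2$-torsion, $2(\lambda-\mu) = 0$ forces $\lambda = \mu$. Conversely, taking $\lambda = \mu$ makes the two products literally identical, so the elements commute. I expect no real obstacle beyond keeping track of the sign-flip coming from the semidirect action; it is exactly this flip that introduces the factor of $2$ and thereby pins down $\lambda = \mu$.
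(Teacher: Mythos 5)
Your proof is correct and follows essentially the same route as the paper: a direct computation of both products using the semidirect product multiplication rule, with the sign-flip of the $\alpha$-component producing the labels $\mu-\lambda$ and $\lambda-\mu$ whose equality forces $\lambda=\mu$. The paper carries out the identical calculation in the labelled-cycle shorthand (obtaining $\overset{\overset{\mu-\lambda}{+}}{(1)}$ versus $\overset{\overset{\lambda-\mu}{+}}{(1)}$), so unpacking to the explicit $(\sigma,\mathbf{v})$ form is only a notational difference.
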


  \begin{proof}
  	For convenience, we assume without loss of generality that $\alpha = 1$. Now $\stackrel{\stackrel{\lambda}{-}}{(1)}\stackrel{\stackrel{0}{+}}{(1)} = \stackrel{\stackrel{\lambda}{-}}{(1)} = \stackrel{\stackrel{0}{+}}{(1)}\stackrel{\stackrel{\lambda}{-}}{(1)}$, so $\stackrel{\stackrel{\lambda}{-}}{(1)}$ commutes with $\stackrel{\stackrel{0}{+}}{(1)}$. In the case of two negative 1-cycles we have $\stackrel{\stackrel{\lambda}{-}}{(1)}\stackrel{\stackrel{\mu}{-}}{(1)} = \stackrel{\stackrel{\mu-\lambda}{+}}{(1)}$ and $\stackrel{\stackrel{\mu}{-}}{(1)}\stackrel{\stackrel{\lambda}{-}}{(1)} = \stackrel{\stackrel{\lambda-\mu}{+}}{(1)}$, so $\stackrel{\stackrel{\lambda}{-}}{(1)}$ commutes with $\stackrel{\stackrel{\mu}{-}}{(1)}$ if and only if $\lambda  = \mu$.
  \end{proof}
   
    \begin{lemma} \label{2cycle} Let $\alpha, \beta$ be distinct elements of  $\{1, \ldots, n\}$  and let $\lambda$, $\mu$ and $\nu$ be integers. 
    \begin{trivlist}
\item[(\textit{i})]  $\overset{\overset{\lambda}{+}}{(\alpha\hspace{0.1cm} \beta)}$ and
      $\overset{\overset{\mu}{+}}{(\alpha\hspace{0.1cm} \beta)}$ commute if and only if $\lambda = \mu$, and $\overset{\overset{\lambda}{-}}{(\alpha\hspace{0.1cm} \beta)}$ and
      $\overset{\overset{\mu}{-}}{(\alpha\hspace{0.1cm} \beta)}$ commute if and only if $\lambda = \mu$. But $\overset{\overset{\lambda}{+}}{(\alpha\hspace{0.1cm} \beta)}$ and
         $\overset{\overset{\mu}{-}}{(\alpha\hspace{0.1cm} \beta)}$ commute for all $\lambda$ and $\mu$.
      \item[(\textit{ii})]
                $\overset{\overset{\lambda}{\pm}}{(\alpha\hspace{0.1cm} \beta)}$ and  $\overset{\overset{0}{+}}{(\alpha)}
                 \overset{\overset{0}{+}}{(\beta)}$ commute for all $\lambda$, but there is no value of $\mu$ or $\lambda$ for which  $\overset{\overset{\lambda}{\pm}}{(\alpha\hspace{0.1cm} \beta)}$ and  $\overset{\overset{0}{+}}{(\alpha)}
                 \overset{\overset{\mu}{-}}{(\beta)}$ or $\overset{\overset{\mu}-}{(\alpha)}
                \overset{\overset{0}{+}}{(\beta)}$ commute.
                  \item[(\textit{iii})]
           $\overset{\overset{\lambda}
            {+}}{(\alpha\hspace{0.1cm} \beta)}$ and $\overset{\overset{\mu}{-}}{(\alpha)}
          \overset{\overset{\nu}{-}}{(\beta)}$ commute if and only if $\mu-\nu=2\lambda$, whereas 
           $\overset{\overset{\lambda}{-}}{(\alpha\hspace{0.1cm} \beta)}$ and  $\overset{\overset{\mu}{-}}{(\alpha)}
          \overset{\overset{\nu}{-}}{(\beta)}$ commute if and only if $\mu+\nu=2\lambda$.   
    \end{trivlist}  
     
     \end{lemma}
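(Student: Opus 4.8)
The plan is to reduce everything to the multiplication rule $(\sigma,\bv)(\tau,\bu)=(\sigma\tau,\bv^\tau+\bu)$ and the resulting commuting criterion: two elements $(\sigma,\bv)$ and $(\tau,\bu)$ commute if and only if (a) $\sigma\tau=\tau\sigma$ in $W$, and (b) $\bv^\tau+\bu=\bu^\sigma+\bv$ in $\zz^n$. As in Lemma~\ref{1cycle}, I may assume $\{\alpha,\beta\}=\{1,2\}$ and treat every element as supported on the first two coordinates, recording each labelled cycle as a pair (permutation part, label vector): the positive transposition $\overset{\overset{\lambda}{+}}{(\alpha\beta)}$ becomes $(\sigma_P,(\lambda,-\lambda))$ where $\sigma_P$ swaps $1,2$ with no sign change; the negative transposition $\overset{\overset{\lambda}{-}}{(\alpha\beta)}$ becomes $(\sigma_N,(\lambda,\lambda))$ where $\sigma_N$ sends $1\mapsto-2$, $2\mapsto-1$; and $\overset{\overset{\mu}{-}}{(\alpha)}\overset{\overset{\nu}{-}}{(\beta)}$ becomes $(\rho,(\mu,\nu))$ where $\rho$ negates both coordinates. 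The only fact I need about the action is that $\bv^\tau$ negates a coordinate wherever $\tau$ carries a minus sign and permutes coordinates by the underlying permutation, exactly the recipe already used implicitly in Lemma~\ref{1cycle}.

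The first key observation is that condition (a) is automatic in parts (i) and (iii): $\sigma_P$ and $\sigma_N$ each commute with themselves, with each other (indeed $\sigma_P\sigma_N=\sigma_N\sigma_P=\rho$), and with $\rho$. So in these parts the question collapses entirely to the linear condition (b). For part (i) I would substitute the three label-vector pairs into (b) and read off the constraints: for two positives $(\lambda,-\lambda)$ and $(\mu,-\mu)$, and for two negatives $(\lambda,\lambda)$ and $(\mu,\mu)$, condition (b) forces $2\lambda=2\mu$, i.e. $\lambda=\mu$; for one positive against one negative the two coordinate equations become identities, so they commute for all $\lambda,\mu$. For part (iii), pitting $(\sigma_P,(\lambda,-\lambda))$ against $(\rho,(\mu,\nu))$ yields in each coordinate the single equation $\mu-\nu=2\lambda$, while $(\sigma_N,(\lambda,\lambda))$ against $(\rho,(\mu,\nu))$ yields $\mu+\nu=2\lambda$; these are exactly the stated conditions.

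Part (ii) is different in character, and is where condition (a) does the work. The element $\overset{\overset{0}{+}}{(\alpha)}\overset{\overset{0}{+}}{(\beta)}$ is the identity on $\{1,2\}$, so it commutes with everything and the first assertion is immediate. For the second assertion, the partner $\overset{\overset{0}{+}}{(\alpha)}\overset{\overset{\mu}{-}}{(\beta)}$ (respectively $\overset{\overset{\mu}{-}}{(\alpha)}\overset{\overset{0}{+}}{(\beta)}$) has permutation part a single sign change on one coordinate, and a direct check shows this single sign change commutes with neither $\sigma_P$ nor $\sigma_N$ (for instance $1\mapsto-2$ in one order but $1\mapsto2$ in the other). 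Since condition (a) already fails, the elements cannot commute whatever the labels are, which is the assertion.

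I expect the only real pitfall to be bookkeeping: fixing once and for all the correct action of $\tau$ on $\bv$ (the sign and index conventions) so that the coordinate equations in (b) come out consistently, and keeping straight which of $\sigma_P$, $\sigma_N$, $\rho$ and the single sign changes commute. Once the action is pinned down against Lemma~\ref{1cycle}, every case is a two-coordinate linear computation, and the essential content is the clean dichotomy: in (i) and (iii) commuting is a linear condition on the labels, whereas in (ii) it is obstructed already at the level of the underlying signed permutations.
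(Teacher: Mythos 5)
Your proposal is correct and follows essentially the same route as the paper: reduce to the two coordinates $\{\alpha,\beta\}$ and compute directly with the semidirect product multiplication, the only cosmetic difference being that you verify $gh=hg$ coordinate-wise (splitting into the condition on the underlying signed permutations and the linear condition on labels) whereas the paper checks that the product $gh$ is an involution. Your observation that part (ii) fails already at the level of the signed permutations while parts (i) and (iii) reduce to linear conditions on the labels is exactly the structure of the paper's argument.
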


\begin{proof} We lose no generality by assuming, for ease of notation, that $\alpha = 1$, $\beta = 2$ and $n=2$. Recall that $\overset{\overset{\lambda}{+}}{(1\hspace{0.1cm} 2)}$ and $\overset{\overset{\lambda}{-}}{(1\hspace{0.1cm} 2)}$ are  shorthand for $(\overset{+}{(1\hspace{0.1cm} 2)}, (\lambda, -\lambda))$ and $(\overset{-}{(1\hspace{0.1cm} 2)}, (\lambda, \lambda))$ respectively. Involutions commute precisely when their product is an involution (or the identity), which we can check using Lemma \ref{invn}.   \begin{trivlist}
		\item[(\textit{i})]
We calculate
$$\overset{\overset{\lambda}{+}}{(1\hspace{0.1cm} 2)}\overset{\overset{\mu}{+}}{(1\hspace{0.1cm} 2)} =  (\overset{+}{(1\hspace{0.1cm} 2)}, (\lambda, -\lambda))(\overset{+}{(1\hspace{0.1cm} 2)}, (\mu, -\mu)) = (1, (\lambda, -\lambda)^{\overset{+}{(1\hspace{0.1cm} 2)}} + (\mu, -\mu)) = (1, (\mu-\lambda, \lambda-\mu)).$$ 
This means $\overset{\overset{\lambda}{+}}{(1\hspace{0.1cm} 2)}$ and $\overset{\overset{\mu}{+}}{(1\hspace{0.1cm} 2)}$ commute if and only if $\lambda = \mu$. We also have 
$$\overset{\overset{\lambda}{-}}{(1\hspace{0.1cm} 2)}\overset{\overset{\mu}{-}}{(1\hspace{0.1cm} 2)} =  (\overset{-}{(1\hspace{0.1cm} 2)}, (\lambda, \lambda))(\overset{-}{(1\hspace{0.1cm} 2)}, (\mu, \mu)) = (1, (\lambda, \lambda)^{\overset{-}{(1\hspace{0.1cm} 2)}} + (\mu, \mu)) = (1, (\mu-\lambda, \mu-\lambda)),$$ 
         so again $\overset{\overset{\lambda}{-}}{(1\hspace{0.1cm} 2)}$ and $\overset{\overset{\mu}{-}}{(1\hspace{0.1cm} 2)}$ commute if and only if $\lambda = \mu$. Finally a similar calculation shows that
         $\overset{\overset{\lambda}{+}}{(1\hspace{0.1cm} 2)}\overset{\overset{\mu}{-}}{(1\hspace{0.1cm} 2)} = \overset{\overset{\mu +\lambda}{-}}{(1)} \;\overset{\overset{\mu-\lambda}{-}}{(2)}$, which is an involution for all values of $\lambda$ and $\mu$, so $\overset{\overset{\lambda}{+}}{(1\hspace{0.1cm} 2)}$ and $\overset{\overset{\mu}{-}}{(1\hspace{0.1cm} 2)}$ always commute.
\item[(\textit{ii})]      Certainly     $\overset{\overset{\lambda}{\pm}}{(1\hspace{0.1cm} 2)}$ and  $\overset{\overset{0}{+}}{(1)}
\overset{\overset{0}{+}}{(2)}$ commute for all $\lambda$; after all,  $\overset{\overset{0}{+}}{(1)}
\overset{\overset{0}{+}}{(2)}$ is just the identity element when $n=2$. Moreover, 
        already in the underlying group $W$ of type $B_2$ we observe that $\overset{\pm}{(1\hspace{0.1cm} 2)}$ does not commute with  $\overset{+}{(1)}
         \overset{-}{(2)}$  or $\overset{-}{(1)}
         \overset{+}{(2)}$, so there is no value of $\mu$ or $\lambda$ for which  $\overset{\overset{\lambda}{\pm}}{(1\hspace{0.1cm} 2)}$ and  $\overset{\overset{0}{+}}{(1)}
         \overset{\overset{\mu}{-}}{(2)}$ or $\overset{\overset{\mu}-}{(1)}
         \overset{\overset{0}{+}}{(2)}$ commute.
\item[(\textit{iii})] We have         
           $\overset{\overset{\lambda}
           	{+}}{(1\hspace{0.1cm} 2)}\overset{\overset{\mu}{-}}{(1)}
           \overset{\overset{\nu}{-}}{(2)} = (\overset{\overset{\mu - \lambda}{-}}{1}
           \; \overset{\overset{\nu + \lambda}{-}}{2})$; this is an involution if and only if $\nu + \lambda = \mu - \lambda$. Thus $\overset{\overset{\lambda}
           	{+}}{(1\hspace{0.1cm} 2)}$ and $\overset{\overset{\mu}{-}}{(1)}
           \overset{\overset{\nu}{-}}{(2)}$ commute if and only if $\mu-\nu=2\lambda$. Similarly  
                      $\overset{\overset{\lambda}
                      	{-}}{(1\hspace{0.1cm} 2)}\overset{\overset{\mu}{-}}{(1)}
                      \overset{\overset{\nu}{-}}{(2)} = (\overset{\overset{\mu - \lambda}{+}}{1}
                      \; \overset{\overset{\nu - \lambda}{+}}{2})$; this is an involution if and only if $\nu - \lambda = -(\mu - \lambda)$. Therefore $\overset{\overset{\lambda}
                      	{-}}{(1\hspace{0.1cm} 2)}$ and $\overset{\overset{\mu}{-}}{(1)}
                      \overset{\overset{\nu}{-}}{(2)}$ commute if and only if $\mu+\nu=2\lambda$. \qedhere\end{trivlist}
         \end{proof}

          \begin{lemma}\label{doubletrans} Let 
          $g_1=\overset{\overset{\lambda_1}{+}}{(\alpha\beta)}\overset{\overset{\lambda_2}{+}}{(\gamma\delta)}$, $g_2=\overset{\overset{\lambda_1}{+}}{(\alpha\beta)}\overset{\overset{\lambda_2}
          	{-}}{(\gamma\delta)}$, $g_3 = \overset{\overset{\lambda_1}{-}}{(\alpha\beta)}\overset{\overset{\lambda_2}
          	{-}}{(\gamma\delta)}$, $h_1=\overset{\overset{\mu_1}{+}}{(\alpha \gamma)}\overset{\overset{\mu_2}{+}}{(\beta\delta)}$, $h_2=\overset{\overset{\mu_1}{+}}{(\alpha \gamma)}\overset{\overset{\mu_2}{-}}{(\beta\delta )}$
          and 
          $h_3=\overset{\overset{\mu_1}{-}}{(\alpha \gamma)}\overset{\overset{\mu_2}{-}}{(\beta\delta )}$,   for distinct $\alpha,
          \beta, \gamma, \delta$ in $\{1, \ldots, n\}$ and integers
          $\lambda_i, \mu_i$. Then
          
          \begin{trivlist}
          \item[(i)] $g_1$ commutes with $h_1$ if and only if $\mu_1 -\lambda_1 =
          \mu_2-\lambda_2$;  \item[(ii)] $g_1$ does not commute with $h_2$; \item[(iii)] $g_1$ commutes with $h_3$ if and only if $\mu_1-\lambda_1 = \mu_2 +\lambda_2$; \item[(iv)]  $g_2$ commutes with $h_2$  if and only if $\mu_1 -\lambda_1 = \mu_2-\lambda_2$; 
          \item[(v)] $g_2$ does not commute with $h_3$;
          \item[(vi)] $g_3$ commutes with $h_3$ if and only if $\mu_1 - \lambda_1 = \lambda_2 - \mu_2$.
          \end{trivlist}
          \end{lemma}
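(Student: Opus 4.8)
The plan is to compute directly, as in the proof of Lemma~\ref{2cycle}. Writing $g_i = (\sigma,\bv)$ and $h_j = (\tau,\bu)$, the multiplication law $(\alpha,\bv)(\tau,\bu)=(\alpha\tau,\bv^\tau+\bu)$ shows that $g_ih_j = h_jg_i$ if and only if both $\sigma\tau = \tau\sigma$ in $W$ and $\bv^\tau + \bu = \bu^\sigma + \bv$; I rewrite the latter as $\bv^\tau - \bv = \bu^\sigma - \bu$. (Equivalently, one could check as in Lemma~\ref{2cycle} that the product $g_ih_j$ is an involution; since the pairings $\{\alpha,\beta\},\{\gamma,\delta\}$ and $\{\alpha,\gamma\},\{\beta,\delta\}$ differ and $\alpha,\beta,\gamma,\delta$ are distinct, we have $g_i \neq h_j$, so the product is never the identity.) As in Lemma~\ref{2cycle} there is no loss of generality in taking $\{\alpha,\beta,\gamma,\delta\} = \{1,2,3,4\}$ and $n = 4$, since all other coordinates are fixed by every $\sigma$ and $\tau$ involved.

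First I would dispose of the permutation condition. Each underlying signed permutation $\sigma$ of a $g_i$ has cycle shape $(1\,2)(3\,4)$ and each $\tau$ of an $h_j$ has shape $(1\,3)(2\,4)$, and these commute as unsigned permutations; but the signs can obstruct commuting. Recording each underlying signed permutation by its action $e_k \mapsto e_{k^\sigma}$ (reading the sign first, so that $e_{-j} = -e_j$, exactly as in Section~\ref{sec2}) and composing, a short direct check shows that $\sigma\tau \neq \tau\sigma$ precisely for the pairs $(g_1,h_2)$ and $(g_2,h_3)$. This already yields cases~(ii) and~(v): there $g_i$ and $h_j$ do not commute. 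In the four remaining pairs $(g_1,h_1)$, $(g_1,h_3)$, $(g_2,h_2)$ and $(g_3,h_3)$ the underlying permutations do commute, so commuting is controlled entirely by the translation condition $\bv^\tau - \bv = \bu^\sigma - \bu$.

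For those four pairs I would compute $\bv^\tau - \bv$ and $\bu^\sigma - \bu$ coordinatewise, reading off $\bv$ and $\bu$ from the shorthand of Notation~\ref{notation} (for instance $h_1$ gives $\bu = (\mu_1,\mu_2,-\mu_1,-\mu_2)$ in the ordering $(1,2,3,4)$, while $g_3$ gives $\bv = (\lambda_1,\lambda_1,\lambda_2,\lambda_2)$). In each case both difference vectors turn out to be scalar multiples of one and the same nonzero sign pattern — for example, for $(g_1,h_1)$ both are multiples of $(1,-1,-1,1)$ and for $(g_3,h_3)$ both are multiples of $(1,1,1,1)$ — so the vector equation collapses to a single scalar identity. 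Reading these off gives $\mu_1 - \lambda_1 = \mu_2 - \lambda_2$ in~(i), $\mu_1 - \lambda_1 = \mu_2 + \lambda_2$ in~(iii), $\mu_1 - \lambda_1 = \mu_2 - \lambda_2$ in~(iv), and $\mu_1 - \lambda_1 = \lambda_2 - \mu_2$ in~(vi).

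The only real obstacle is bookkeeping: because the $\pm$ labels denote genuinely different underlying signed permutations, the action $\bv^\tau$ simultaneously permutes coordinates and flips signs, and it is easy to slip a sign. Keeping a single fixed convention ($e_k \mapsto e_{k^\sigma}$ with the sign read first) throughout both the permutation check and the translation computation is what keeps the six cases consistent; no step is conceptually difficult once that convention is pinned down.
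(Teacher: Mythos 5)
Your proposal is correct and follows essentially the same route as the paper: reduce without loss of generality to $n=4$, dispose of the pairs $(g_1,h_2)$ and $(g_2,h_3)$ by noting that the underlying signed permutations already fail to commute in $W$, and settle the remaining four pairs by a direct computation with the semidirect-product multiplication (I checked your sign patterns $(1,-1,-1,1)$ for $(g_1,h_1)$ and $(1,1,1,1)$ for $(g_3,h_3)$ and the resulting scalar conditions; they match the stated conclusions). The only cosmetic difference is that the paper computes the single product $g_ih_j$ and tests whether it is an involution via Lemma \ref{invn}, whereas you equate $g_ih_j$ with $h_jg_i$ directly — an equivalent criterion for involutions.
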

          \begin{proof} We lose no generality by assuming, for
          ease of notation, that $n=4$ and  $g_1=\overset{\overset{\lambda_1}{+}}
          {(1\hspace{0.1cm} 2)}\overset{\overset{\lambda_2}{+}}
          {(3\hspace{0.1cm} 4)}$ 
           
          \begin{trivlist}
          	\item[(i)] $g_1 =(\overset{+}{(1\hspace{0.1cm}2)}\overset{+}{(3\hspace{0.1cm}4)},(\lambda_1,
          	-\lambda_1, \lambda_2, -\lambda_2))$ and $h_1 = (\overset{+}{(1\hspace{0.1cm}3)}\overset{+}{(2\hspace{0.1cm}4)}, (\mu_1,
          	\mu_2, -\mu_1, -\mu_2))$. Hence
          	\begin{align*}
          		g_1h_1&= (\overset{+}{(1\hspace{0.1cm}4)}\overset{+}{(2\hspace{0.1cm}3)}, (\lambda_1, -\lambda_1, \lambda_2,
          		-\lambda_2)^{(\overset{+}{(1\hspace{0.1cm}3)}\overset{+}{(2\hspace{0.1cm}4)}} + (\mu_1, \mu_2, -\mu_1, -\mu_2))\\
          		&= (\overset{+}{(1\hspace{0.1cm}4)}\overset{+}{(2\hspace{0.1cm}3)}, (\mu_1 +\lambda_2, \mu_2-\lambda_2, -\mu_1 +
          		\lambda_1,
          		-\mu_2-\lambda_1)).\end{align*} %
          	Now $g_1$ and $h_1$ commute if and only if $g_1h_1$ is an involution. This occurs if and only if $\mu_1 + \lambda_2 =
          	-(\mu_2-\lambda_1)$ and $ \mu_2-\lambda_2 = -(-\mu_1 + \lambda_1)$. Rearranging gives $\mu_1 -\lambda_1 = \mu_2-\lambda_2$, as required for part (i). 
\item[(ii)] Since $\overset{+}{(1\hspace{0.1cm}2)}\overset{+}{(3\hspace{0.1cm}4)}$ and $\overset{+}{(1\hspace{0.1cm}3)}\overset{-}{(2\hspace{0.1cm}4)}$ do not commute, it is impossible for $g_1$ to commute with $h_2$. 
\item[(iii)] We have $g_1 =(\overset{+}{(1\hspace{0.1cm}2)}\overset{+}{(3\hspace{0.1cm}4)},(\lambda_1,
            -\lambda_1, \lambda_2, -\lambda_2))$ and $h_3 = (\overset{-}{(1\hspace{0.1cm}3)}\overset{-}{(2\hspace{0.1cm}4)}, (\mu_1,
            \mu_2, \mu_1, \mu_2))$, and \begin{align*}
            g_1g_3&= (\overset{-}{(1\hspace{0.1cm}4)}\overset{-}{(2\hspace{0.1cm}3)}, (\lambda_1, -\lambda_1, \lambda_2,
            -\lambda_2)^{\overset{-}{(1\hspace{0.1cm}3)}\overset{-}{(2\hspace
            {0.1cm}4)}} + (\mu_1, \mu_2, \mu_1, \mu_2))\\
            &= (\overset{-}{(1\hspace{0.1cm}4)}\overset{-}{(2\hspace{0.1cm}3)}, (\mu_1 -\lambda_2, \mu_2+\lambda_2, \mu_1 -
            \lambda_1,\mu_2+\lambda_1)).\end{align*} %
            Now $g_1$ and $h_3$ commute if and only if $g_1h_3$ is an
            involution. From the above calculation this occurs if and only if $\mu_1 - \lambda_2 =
            \mu_2+\lambda_1$ and $ \mu_2+\lambda_2 = \mu_1-\lambda_1$. That is, if  and only if $ \mu_2+\lambda_2 =\mu_1-\lambda_1$.
\item[(iv)] We calculate $g_2h_2 = (\overset{-}{(1\hspace{0.1cm}4)}\overset{+}{(2\hspace{0.1cm}3)}, (\mu_1 +\lambda_2, \mu_2-\lambda_2, -\mu_1 +
            \lambda_1,\mu_2+\lambda_1))$. Thus $g_2$ commutes with $h_2$ if and only if $\mu_1 - \lambda_1 = \mu_2 - \lambda_2$.
\item[(v)] Since $\overset{+}{(1\hspace{0.1cm}2)}\overset{-}{(3\hspace{0.1cm}4)}$ and $\overset{-}{(1\hspace{0.1cm}3)}\overset{-}{(2\hspace{0.1cm}4)}$ do not commute, $g_2$ cannot commute with $h_3$.
\item[(vi)] This part follows from the fact that $g_3h_3 = (\overset{+}{(1\hspace{0.1cm}4)}\overset{+}{(2\hspace{0.1cm}3)}, (\mu_1 -\lambda_2, \mu_2-\lambda_2, \mu_1 -
            \lambda_1,\mu_2-\lambda_1)$.  \qedhere
       \end{trivlist} \end{proof}

     We end this section by stating the relevant results from \cite{Finite} about commuting involution graphs in Weyl groups of type $B_n$. 
                   \begin{thm}[Theorem 1.1 of \cite{Finite}] \label{finite}
                    Suppose that $W$ is of type $B_n$, and let $$\sigma=\overset{+}{(12)}\cdots \overset{+}
                    {(2m-1 \;\; 2m)}\overset{+}{(2m+1)}\cdots \overset{+}{(2m+l)}\overset{-}
                    {(2m+l+1)}
                    \cdots\overset{-}{(2m+l+t)}.$$ Set $X=a^{G}$ and $k:=\max\{l,t\}$. Then the following hold.
                    \begin{trivlist}
                    \item[(\textit{i})] If $m=0$, then $\C(G,X)$ is a complete graph.
                    \item[(\textit{ii})] If $k=0$, then $\diam \C(G,X) \leq 2$.
                    \item[(\textit{iii})] If $k=1$ and $m>0$, then $\C(G,X)$ is disconnected.
                    \item[(\textit{iv})] If $k \geq 2$ and $n>5$, then $\diam \C(G,X) \leq 4$.
                    \item[(\textit{v})] If $n=5$, $m=1$ and $k=2$ then $\diam \C(G,X)=5$. If $n=5$, $m=1$ and $k=3$ then $\diam \C(G,X)=2$. Finally if $n=4$, $m=1$ and $k=2$ then $\C(G,X)$ is disconnected.
                    \end{trivlist}
                  \end{thm}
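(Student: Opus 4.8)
The plan is to work throughout with the signed-permutation description of $W$ of type $B_n$, in which an involution is a product of positive $2$-cycles, positive $1$-cycles and negative $1$-cycles, and to fix the representative $a = \sigma$ so that $\diam \C(G,X) = \max\{d(a,x) : x \in X\}$. The key computational input is the commuting criterion: two involutions commute exactly when their product is again an involution, which for signed permutations can be read off from how their supports and signs interact (this is the label-free shadow of Lemmas \ref{2cycle} and \ref{doubletrans}, obtained by setting the $\bv$-data to $\mathbf{0}$). Parts (i) and (ii) are then quick. For (i), when $m=0$ every element of $X$ negates exactly $t$ coordinates and fixes the rest, so it is a diagonal sign-change; any two such elements commute, and $\C(G,X)$ is complete. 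For (ii), when $k=0$ we have $l=t=0$ and $n=2m$, so $X$ consists of the fixed-point-free products of $m$ positive transpositions; here the sign data is trivial and the problem reduces to the commuting graph of fixed-point-free involutions in $\sym(2m)$, for which the diameter-$2$ bound follows by producing, for any $a,x$, a single involution $z$ whose transpositions are compatible with both pairings.

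For the disconnectedness statements, namely (iii) and the final clause of (v), the method is to exhibit a function on $X$ that is constant along edges but not constant on $X$. When $k=1$ and $m>0$ (say there is a unique negative $1$-cycle, the case of a unique positive $1$-cycle being symmetric) the lone singleton is too rigid to move freely: any involution commuting with a given element of $X$ must either retain each transposition or swap a pair of them wholesale, and tracking the location of the singleton relative to this transposition structure yields a two-valued invariant preserved by every commuting move. I would make this precise by defining the invariant on a single element via Lemmas \ref{1cycle}--\ref{doubletrans}, checking its invariance under each possible type of commuting neighbour, and finally exhibiting two elements of $X$ with different values, which forces at least two components.

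The substance of the theorem is part (iv): when $k \geq 2$ and $n>5$, every $x \in X$ satisfies $d(a,x) \leq 4$. The plan is to build an explicit path from $a$ to $x$ of length at most $4$, say $a - z_1 - z_2 - z_3 - x$ (shortening it where possible), by progressively matching the cycle structure of the current vertex to that of $x$. Having at least two free singletons of the dominant sign provides the workspace needed to reposition a transposition or to relocate a negative $1$-cycle in a single commuting step, using the commuting relations of Lemma \ref{doubletrans}(i),(iii),(vi) to rearrange pairs of transpositions and Lemma \ref{2cycle}(iii) to convert between a transposition and a pair of negative $1$-cycles. I would first move $a$ to an element $z_1$ agreeing with $x$ on its singleton data, then correct the transpositions in at most two further steps, reserving the last step for the remaining discrepancy; the hypothesis $n>5$ guarantees there are always enough uninvolved points to carry out each move.

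The hardest part will be organising the case analysis in (iv) so that the length-$4$ bound is never exceeded: the number of ways in which the transposition structures of $a$ and $x$ can overlap is substantial, and one must verify that the available free points really do allow each correcting involution to commute with both of its neighbours, which is precisely where $n>5$ and $k\geq 2$ are used. The exceptional small cases in (v), with $n=5$, $m=1$ and with $n=4$, $m=1$, I would treat separately by a direct, essentially finite analysis: here the workspace is too small for the generic argument, and one either exhibits a longer geodesic (giving diameter $5$, or diameter $2$ once the singleton count is large enough to reconnect quickly) or an invariant as in part (iii) showing disconnectedness. These can be confirmed by explicit computation in the finite groups $B_4$ and $B_5$.
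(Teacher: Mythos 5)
This statement is not proved in the paper at all: it is Theorem 1.1 of \cite{Finite} (Bates, Bundy, Perkins and Rowley, \emph{Commuting Involution Graphs for Finite Coxeter Groups}), quoted verbatim as an imported tool, so there is no in-paper proof to compare your attempt against. Your sketch is therefore a from-scratch reconstruction of an external result, and as a plan it is broadly aligned with how such results are actually proved (explicit short paths for the upper bounds, an edge-invariant for disconnectedness, finite computation for the small exceptional cases). A few soft spots are worth flagging. In part (ii) you assert that when $k=0$ ``the sign data is trivial and the problem reduces to the commuting graph of fixed-point-free involutions in $\sym(2m)$''; this is not quite right, since a positive $2$-cycle in type $B_n$ can carry either sign pattern $(\overset{+}{i}\,\overset{+}{j})$ or $(\overset{-}{i}\,\overset{-}{j})$, and the label-free shadow of Lemma \ref{doubletrans}(ii) shows that $\overset{+}{(12)}\overset{+}{(34)}$ and $\overset{+}{(13)}\overset{-}{(24)}$ do \emph{not} commute even though their underlying permutations do -- so the common-neighbour $z$ must be chosen with compatible signs, not just a compatible pairing. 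In part (iii) the natural invariant is the \emph{position} of the unique singleton of the given sign (an $n$-valued datum, exactly as exploited in the paper's own Theorem \ref{mainhalf}(ii)), not a two-valued one. Finally, part (iv) is where essentially all of the content lives, and your proposal is a programme rather than a proof: you would need to carry out the full overlap case analysis to claim the result. Since the paper's authors deliberately cite rather than reprove this theorem, the honest conclusion is that your attempt is a reasonable outline of the cited proof's strategy but cannot be checked against anything in this paper.
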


   \section{Proof of Main Results} \label{sec3}

We begin by looking at connectedness. For an element $g = (\sigma, \bv)$ in a conjugacy class $X$ of $G$, define $\hat g = \sigma$. Then let $\hat X$ be the conjugacy class of $\hat g$ in $W$. Clearly if $g, h \in X$,
       then $\hat g, \hat h \in \hat X$. 
      \begin{lemma} \label{easy} Suppose $g, h \in X$. If $d(\hat g,
      \hat h) = k$, then $d(g,h) \geq k$. If $\hatgraph$ is disconnected,
      then $\graph$ is disconnected.
      \end{lemma}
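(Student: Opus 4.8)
The plan is to exploit the projection $\pi \colon G \to W$, $(\sigma, \bv) \mapsto \sigma$, which is a group homomorphism: by the multiplication rule $(\alpha, \bv)(\tau, \bu) = (\alpha\tau, \bv^\tau + \bu)$ the first coordinate of a product is just the product in $W$. Restricted to the vertex set $X$, the map $\pi$ is precisely $g \mapsto \hat g$, and it lands in $\hat X$. The crucial consequence I would record first is that $\pi$ sends commuting pairs to commuting pairs: if $g, h \in X$ commute in $G$, then $\hat g$ and $\hat h$ commute in $W$. Thus the vertex map $g \mapsto \hat g$ carries each edge of $\graph$ either to an edge of $\hatgraph$ or, when $\hat g = \hat h$, to a single vertex.

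For the distance inequality I would start from a shortest path $g = x_0, x_1, \ldots, x_\ell = h$ in $\graph$, where $\ell = d(g,h)$ and consecutive $x_i$ commute. Applying $\pi$ gives a walk $\hat g = \hat x_0, \hat x_1, \ldots, \hat x_\ell = \hat h$ in $\hatgraph$ whose consecutive vertices are equal or adjacent; deleting repetitions leaves a genuine path of length at most $\ell$. Hence $d(\hat g, \hat h) \le d(g,h)$, so $d(\hat g, \hat h) = k$ forces $d(g,h) \ge k$.

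For the disconnectedness statement I would first check that $\pi$ restricts to a \emph{surjection} $X \to \hat X$: given $\sigma' \in \hat X$, write $\sigma' = w^{-1}\hat g w$ for some $w \in W$, and conjugate by $(w, \mathbf{0})$; by \eqref{eqconj} the element $g' = g^{(w,\mathbf{0})}$ lies in $X$ and satisfies $\hat{g'} = \sigma'$. Assuming $\hatgraph$ is disconnected, I would then pick $\hat g, \hat h$ in two different components and lift them to $g, h \in X$. Any path from $g$ to $h$ in $\graph$ would project, by the previous paragraph, to a walk joining $\hat g$ and $\hat h$ in $\hatgraph$, contradicting their lying in distinct components; hence no such path exists and $\graph$ is disconnected.

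I expect no genuine obstacle here, as the whole argument is structural. The two points that merit explicit verification are that $\pi$ is a homomorphism (immediate from the multiplication formula) and the surjectivity of $X \to \hat X$; the latter is what guarantees that every component of $\hatgraph$ is actually attained by the image of a vertex of $\graph$, and without it one could not deduce disconnection of $\graph$ from disconnection of $\hatgraph$.
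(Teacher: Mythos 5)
Your proof is correct and follows essentially the same route as the paper, whose entire argument is the single observation that if $g$ and $h$ commute in $G$ then $\hat g$ and $\hat h$ commute in $W$. The only difference is that you make explicit the surjectivity of $X \to \hat X$ (needed for the disconnectedness claim), which the paper leaves implicit; this is a worthwhile detail but not a different approach.
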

      \begin{proof} The result follows immediately from the observation that is $g$ commutes with $h$ in $G$, then $\hat g$ commutes with $\hat h$ in $W$. 
      \end{proof}

We can now prove Theorem \ref{main} (which gives necessary and sufficient conditions for \graph\ to be disconnected) in one direction. The proof in the other direction will arise from bounding the diameters of graphs not shown in Theorem \ref{mainhalf} to be disconnected. 

\begin{thm}\label{mainhalf}   
 Let $X$ be a conjugacy class of involutions in $G_n$ (where $n \geq 2$) with labelled cycle type $(m,k_e,k_o,l)$. Then \graph\ is disconnected in each of the following cases.
           \begin{trivlist}
    \item[(\textit{i})] $m=0$ and $l=0$;
            \item[(\textit{ii})] $m > 0$, $l=0$ and either $k_e = 1$ or $k_o=1$;
           \item[(\textit{iii})] $m > 0$ and $\max(k_e, k_o, l) = 1$;
           \item[(\textit{iv})] $n=4$ and $m=1$; 
       \item[(\textit{v})] $n=6$, $m=1$ and $k_e =k_o=2$.  
        \end{trivlist}   
\end{thm}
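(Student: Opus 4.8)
The overall plan is to attach to each $g\in X$ a combinatorial invariant $\phi(g)$ and show that adjacency forces $\phi(g)=\phi(h)$. Since conjugation acts transitively on $X$ as graph automorphisms (and compatibly on the values of $\phi$), the relation ``$\phi(g)=\phi(h)$'' is itself conjugation-invariant, so it suffices to verify $\phi(a)=\phi(h)$ for every neighbour $h$ of the fixed representative $a=a_{m,k_e,k_o,l}$ of Theorem \ref{conjclass}; whenever $\phi$ is non-constant on $X$ this yields disconnectedness. Several of the listed cases require no new invariant: projecting to the underlying Weyl group of type $B_n$ via Lemma \ref{easy} reduces them to Theorem \ref{finite}. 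Indeed $\hat X$ is the $B_n$-class with the same transposition block, $l$ positive and $k_e+k_o$ negative $1$-cycles, so writing $k=\max\{l,k_e+k_o\}$, part (ii) with $k_e+k_o=1$, part (iii) with $k_e+k_o\le 1$, and all of part (iv) ($n=4$, $m=1$, where $k\in\{1,2\}$) fall under Theorem \ref{finite}(iii),(v) together with Lemma \ref{easy}. Case (i) is handled directly: every element has underlying permutation $-I$, which is central in $W$, and a coordinatewise application of Lemma \ref{1cycle} shows that $(-I,\bv)$ and $(-I,\bw)$ commute only when $\bv=\bw$; as $|X|\ge 2$ the graph has no edges at all.

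The remaining cases rest on the parity constraint in Lemma \ref{2cycle}(iii): a transposition of one involution commutes with a pair of negative $1$-cycles of another only if those two $1$-cycles carry labels of equal parity. I combine this with the standard fact that an element $\tau$ commuting with $\sigma=\hat a$ preserves each of the three blocks of $\sigma$ -- the transposition support $T$, the negative-$1$-cycle support $N$, and the positive-$1$-cycle support $P$ (a commuting signed permutation preserves the $\pm1$-eigenspaces of $\sigma$, whose coordinate axes are exactly those indexed by $N$ and $P$). This pins down where a neighbour's transpositions can sit.

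For case (ii) I take $\phi(g)$ to be the position of the unique even (resp.\ odd) negative $1$-cycle when $k_e=1$ (resp.\ $k_o=1$): being the only point of its parity in $N$, it can never be paired into a transposition, and with $l=0$ there is no positive $1$-cycle for it to interchange with, so it remains a negative $1$-cycle fixed in position along every edge. For case (iii), the hypothesis $\max(k_e,k_o,l)=1$ forces $N$ to contain no two $1$-cycles of equal parity and $|P|\le 1$, so no neighbour's transposition can form inside $N$ or $P$; hence all $m$ transpositions of a neighbour lie in $T$, and since $|T|=2m$ they fill it, making the transposition support $\phi(g)=T(g)$ an invariant. Both invariants are clearly non-constant on $X$, whence disconnectedness. (The routine verifications reduce, via Lemmas \ref{1cycle}--\ref{doubletrans}, to the block analysis above.)

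I expect case (v) to be the genuine obstacle: here $\hat X$ is connected in $B_6$ (Theorem \ref{finite}(iv)), so no reduction is available, and both invariants above fail since $k_e=k_o=2$ makes $N$ contain equal-parity pairs. The key observation is that with exactly two even and two odd negative $1$-cycles, the even ones form a single pair, the odd ones a pair, and the transposition a pair, giving a partition of $\{1,\dots,6\}$ into three $2$-element blocks; I take $\phi(g)$ to be this unordered partition. Checking the neighbours of $a$ with Lemmas \ref{1cycle} and \ref{2cycle}, a neighbour's transposition can only occupy $T=\{1,2\}$, the even pair $\{3,4\}$, or the odd pair $\{5,6\}$, and in each case the parity bookkeeping forces the four remaining points to split as the other two of these same pairs. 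Thus every neighbour has the same three-block partition as $a$, with only the roles of the blocks permuted, so $\phi$ is constant on components while plainly varying over $X$. The rigidity that makes this work -- that moving the transposition onto a like-parity negative pair consumes that entire pair -- is precisely what fails once $k_e=k_o\ge 3$, which is consistent with connectedness in the larger cases; isolating why $n=6$ is exceptional is the crux of the argument.
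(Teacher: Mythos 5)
Your proposal is correct and follows essentially the same strategy as the paper: the reductions to type $B_n$ via Lemma \ref{easy} and Theorem \ref{finite} are the ones the paper uses for (iv) and the small subcases of (ii)--(iii), and your edge-preserved invariants (the unique odd/even negative $1$-cycle for (ii), the partition into transposition, even and odd blocks for (v)) coincide with the paper's, resting on the same parity constraint from Lemma \ref{2cycle}(iii). The only cosmetic differences are that you package all of case (iii) uniformly through the transposition-support invariant, where the paper argues by subcases, and that you make explicit the eigenspace argument for block preservation that the paper leaves implicit.
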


\begin{proof} Let $X$ be a conjugacy class of involutions in $G_n$, with labelled cycle type $(m, k_e, k_o, l)$, and $a = a_{m,k_e,k_o,l} \in X$ as defined in Theorem \ref{conjclass}. We deal with each case in turn.
 \begin{trivlist}
    \item[(\textit{i})] Suppose $l=0$ and $m=0$. Then any $x$ in $X$ is of the form  $\stackrel{\stackrel{\lambda_1}{-}}{(1)}\stackrel{\stackrel{\lambda_2}{-}}{(2)} \cdots \stackrel{\stackrel{\lambda_n}{-}}{(n)}$ for some $\lambda_i$ (where $k_e$ of the $\lambda_i$ are even and $k_o$ are odd). By Lemma  \ref{1cycle}, $x$ does not commute with any other element of $X$. So in fact \graph\ is completely disconnected in this case. 
          \item[(\textit{ii})] Suppose $l=0$ and $k_o = 1$ (the case $k_e = 1$ is similar). Then $$a = \stackrel{0}{\stackrel{+}{(1\; 2)}}\cdots \stackrel{0}{\stackrel{+}{(2m-1\;\; 2m)}}\stackrel{0}{\stackrel{-}{(2m+1)}}\cdots \stackrel{0}{\stackrel{-}{(n-1)}}\stackrel{1}{\stackrel{-}{(n)}}.$$
          Suppose $b \in X$ such that $a$ commutes with $b$. Consider the cycle of $b$ that contains $n$. This must be a negative 1-cycle or a 2-cycle, because $b$ has the same labelled cycle type as $a$. If it is a 2-cycle, then $b$ cannot commute with $a$, by Lemma \ref{2cycle}(iii). Therefore it is a negative 1-cycle  $\stackrel{\lambda}{\stackrel{-}{(n)}}$ where $\lambda$ is odd, and then by Lemma \ref{1cycle}, $b$ contains  $\stackrel{1}{\stackrel{-}{(n)}}$. The same argument shows that any element $c$ of $X$ which commutes with $b$ must also contain  $\stackrel{1}{\stackrel{-}{(n)}}$, and inductively all elements in the connected component of \graph\ containing $a$ must contain  $\stackrel{1}{\stackrel{-}{(n)}}$. Therefore \graph\ is disconnected.
          
         \item[(\textit{iii})] Suppose $m>0$ and $\max(k_e, k_o, l) = 1$. If $l=0$ then \graph\ is disconnected by (ii). So we can assume $l=1$. If either of $k_e$ or $k_o$ is zero, then Theorem~\ref{finite}(iii) and Lemma~\ref{easy} imply that \graph\ is disconnected. It remains to consider the case $l=k_e = k_o = 1$. Here, $2m = n-3$ and  $$a = \stackrel{0}{\stackrel{+}{(1\; 2)}}\cdots \stackrel{0}{\stackrel{+}{(2m-1\;\; 2m)}}\stackrel{0}{\stackrel{-}{(n-2)}}\stackrel{1}{\stackrel{-}{(n-1)}}\stackrel{0}{\stackrel{+}{(n)}}.$$
         Suppose $b \in X$ such that $a$ commutes with $b$, and suppose $\stackrel{\mu}{\stackrel{\pm}{(t)}}$ is a 1-cycle of $b$. If $t \in \{1, \ldots, 2m\}$ then as $a$ contains a transposition $\stackrel{0}{\stackrel{\pm}{(t t')}}$ for some $t'$, Lemma \ref{2cycle} (ii) and (iii) show that the only way $a$ and $b$ could commute is if $b$ contained $\stackrel{0}{\stackrel{+}{(t)}}\stackrel{0}{\stackrel{+}{(t')}}$ or $\stackrel{\mu}{\stackrel{-}{(t)}}\stackrel{\nu}{\stackrel{-}{(t)}}$ where $\mu \equiv \nu \; (\text{mod } 2)$, contradicting our assumptions about the labelled cycle type of $b$. Therefore the elements appearing in 1-cycles of $b$ are $n-2$, $n-1$ and $n$. Inductively this holds for all elements in the connected component of \graph\ containing $a$. Therefore \graph\ is disconnected.
         \item[(\textit{iv})] Suppose $n=4$ and $m=1$. If $\max(k_e, k_o, l) = 1$ then \graph\ is disconnected by (iii). If this doesn't happen, then one of $k_e$, $k_o$ or $l$ is 2. By Theorem~\ref{finite}(v) and Lemma~\ref{easy}, \graph\ is again disconnected. 
     \item[(\textit{v})] Suppose $n=6$, $m=1$ and $k_e =k_o=2$. For any $x$ in $X$, we have $x = \stackrel{\lambda}{\stackrel{+}{(\alpha \; \alpha')}}\stackrel{\mu}{\stackrel{-}{(\beta)}}\stackrel{\mu'}{\stackrel{-}{(\beta')}}\stackrel{\nu}{\stackrel{-}{(\gamma)}}\stackrel{\nu'}{\stackrel{-}{(\gamma')}}$, where $\{\alpha, \alpha', \beta, \beta', \gamma, \gamma'\} = \{1, 2, 3, 4, 5, 6\}$, $\mu$ and $\mu'$ are even, and $\nu, \nu'$ are odd. Associate a set $T(x) = \left\{\{\alpha, \alpha'\}, \{\beta, \beta'\}, \{\gamma, \gamma'\}\right\}$ to $x$. Given that, by Lemma \ref{2cycle}, a transposition can only commute with a pair of negative 1-cycles if either both cycles are odd or both are even, and also with reference to Lemma \ref{1cycle}, we see that if $y$ in $X$ commutes with $x$, then $T(x) = T(y)$. Therefore, for example,  $\overset{\overset{0}{+}} {(1\;2)}
          \overset{\overset{0}{-}} {(3)}\overset{\overset{0}{-}} {(4)}
        \overset{\overset{1}{-}} {(5)}
          \overset{\overset{1}{-}} {(6)}$
           and $\overset{\overset{0}{+}} {(4\;5)}
           \overset{\overset{0}{-}}{(1)}\overset{\overset{1}{-}} {(2)}
           \overset{\overset{0}{-}} {(3)}    
            \overset{\overset{1}{-}} {(6)}$ are not connected in \graph.\qedhere 
        \end{trivlist}   
\end{proof}

Our first result bounding diameters is when $m=0$. 

\begin{thm}\label{disc}
	If $m=0$ and $l\geq 1$, then $$ \diam \graph  =
	\left\{\begin{array}{ll}\textstyle 2& \text{if  $2l\geq n$} \\ \lceil\frac{n}{l}\rceil & \it{if } 0 \in \{k_e, k_o\} \\
	n + 1 & \text{if $l = 1$ and $0 \notin \{k_e, k_o\}$}.\end{array} \right.$$
	In all other cases $\lceil\frac{n}{l}\rceil 
	\leq \diam \graph \leq 
	\lceil\frac{n}{l}\rceil + 2$.
\end{thm}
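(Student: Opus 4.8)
The plan is to recast the problem, via Lemma~\ref{1cycle}, as one about sliding ``blanks''. Since $m=0$, every $x \in X$ is a product of $n-l$ negative $1$-cycles and $l$ positive $1$-cycles, so I encode $x$ by its set $F_x$ of $l$ fixed points (``blanks'') together with the integer label carried by each of the remaining $n-l$ positions, of which $k_e$ are even and $k_o$ are odd. Disjoint $1$-cycles commute, and Lemma~\ref{1cycle} says that at a shared position two negative $1$-cycles commute precisely when their labels agree, while a negative $1$-cycle always commutes with a fixed point; hence $x$ and $y$ are adjacent in \graph\ exactly when they carry equal labels at every position that is negative in both. In particular a position can change its label along a path only by first passing through a blank, and one edge is a move of the $l$ blanks in which each newly vacated position is assigned a fresh label whose parity matches the tile it displaced (so as to preserve the labelled cycle type). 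This is a relabelling $15$-puzzle on $K_n$ with $l$ holes and with tiles distinguished only by parity class; fixing $a = a_{0,k_e,k_o,l}$ I compute $\diam \graph = \max_x d(a,x)$.

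For $2l \ge n$ I would show the diameter equals $2$. Given $x,y \in X$, the set $\Delta$ of positions negative in both but with different labels lies among the positions negative in both, of which there are $n - 2l + |F_x \cap F_y| \le |F_x \cap F_y| \le l$; so a common neighbour $z$ can be built by placing blanks on $\Delta$ (plus enough further positions to reach $l$ blanks) and matching labels elsewhere, using the positions blank in both $x$ and $y$ to balance the parity counts of $z$. Since any two elements differing in a single label fail to commute, the graph is not complete (as $l<n$), so the diameter is exactly $2$.

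For the lower bound $\lceil n/l\rceil$ --- needed in every remaining regime, and not available from the finite quotient since the $B_n$ graph is complete when $m=0$ by Theorem~\ref{finite}(i) --- I take the target $x$ with $F_x = F_a$ and every label altered from its value in $a$. Along any path $a = z_0, \dots, z_d = x$, each of the $n-l$ negative positions is a tile in $a$ that must reach a different final label, so it is blank at some step $t-1$ with $1 \le t-1 \le d-1$; each such step contributes at most $l$ blanks and each blank settles at most one position, whence $l(d-1) \ge n-l$ and so $d \ge \lceil n/l\rceil$.

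The substantive part, and the main obstacle, is the matching upper bound, carried out by an explicit sweep of the blanks in which the parity refinement of the $15$-puzzle is decisive. When $0 \in \{k_e,k_o\}$ all tiles share one parity, so they are interchangeable and freely relabellable; parking the $l$ blanks over the $n$ positions in $\lceil n/l\rceil$ successive batches and fixing each target label as the blanks pass gives $\diam \graph \le \lceil n/l\rceil$, hence equality. When $k_e, k_o \ge 1$ the tiles split into two distinguishable classes and a target also prescribes how these classes are arranged; for $l=1$ the single blank is forced along a closed walk whose induced permutation must preserve both classes, so the worst target --- namely $a$ with all labels changed --- requires an even-length walk visiting all $n-1$ tile positions, costing one extra move and giving $\diam \graph = n+1$. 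For $l \ge 2$ the spare blanks allow a class-transposition to be realised locally, so at most two moves after the length-$\lceil n/l\rceil$ sweep repair the arrangement, yielding $\lceil n/l\rceil \le \diam \graph \le \lceil n/l\rceil + 2$. I expect the delicate points to be verifying that the parity counts can be maintained at every intermediate step (so that each lies in $X$) and the $15$-puzzle parity bookkeeping that fixes the additive constant at $+1$ when $l=1$ and bounds it by $+2$ when $l \ge 2$.
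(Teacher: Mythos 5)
Your plan is essentially the paper's own argument in different clothing: both proofs encode an element of $X$ by the positions of its $l$ fixed points together with the labels elsewhere, observe via Lemma~\ref{1cycle} that a label can only change along a path by the position first becoming a fixed point, and then sweep the fixed points across the $n$ positions in batches of size $l$ (the paper's sequences $g_0,\dots,g_p$ and $x_0,x_1,\dots$). Your lower bound is in fact cleaner than the paper's: counting position--step incidences to get $l(d-1)\ge n-l$ is more transparent than the paper's verbal ``at most $l$ new correct cycles per step'' argument, and it is correct. Two points need attention. First, your justification of the extra $+1$ when $l=1$ and $k_e,k_o\ge 1$ is not right as stated: the obstruction is not that the blank's walk must have even length (a transposition inside a parity class of size at least $2$ is class-preserving and odd, so odd-length walks are not excluded in general), but that a closed walk of length exactly $n$ visiting all $n$ positions is a Hamiltonian cycle and hence induces an $(n-1)$-cycle on the tiles, which cannot preserve two nonempty parity classes; with that correction the bound $d\ge n+1$ for the all-labels-shifted target is sound and matches the paper's. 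Second, the upper bounds in the two-parity cases ($n+1$ for $l=1$, $\lceil n/l\rceil+2$ for $l\ge 2$) are precisely where the paper expends most of its effort --- constructing the intermediate elements $y$ and $z$, and reducing to the one-parity case on the residual positions --- and your proposal defers this (``at most two moves \dots repair the arrangement''). That is a genuine gap in the write-up as it stands, though the sweep you describe can be completed along the paper's lines, so the approach itself is not at fault.
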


\begin{proof}
 In this case we have $$a=
    \overset{\overset{0}{-}}{(1)}\cdots \overset{\overset{0}{-}}{(k_e)}\overset{\overset{1}{-}}{(k_e+1)}\cdots  \overset{\overset{1}{-}}{(k_e+k_o)}\overset{\overset{0}{+}}{(k_e+k_o+1)}\cdots \overset{\overset{0}{+}}{(n)}.$$ 
 Let $g\in X$. Then for appropriate $\varepsilon_1, \ldots, \varepsilon_{k_e+k_o}, \rho_1, \ldots, \rho_{l}$ and $\lambda_1, \ldots, \lambda_{k_e+k_o}$  we have that
  $g=\prod_{i=1}^{k_e+k_o}\overset{\overset{\lambda_i}{-}}{(\varepsilon_i)}\prod_{i=1}^{l}\overset
  {\overset{0}{+}}{(\rho_i)}$. Since $g \in X$, exactly $k_e$ of the $\lambda_i$ must be even.  
If $2l \geq n$, that is, $l > k_e + k_o$, then $g$ commutes with an element $h$ in $X$ which contains the cycles $\overset{\overset{0}{+}}{(1)} \cdots \overset{\overset{0}{+}}{(k_e + k_e)}$. Now $h$ certainly commutes with $a$, and so $\diam \graph = 2$.\\

Now we suppose that $2l < n$, and that one of $k_e$ and $k_o$ is zero. Without loss of generality, we may choose $k_o = 0$. Then, writing $k=k_e$, we have $a=
    \overset{\overset{0}{-}}{(1)}\cdots \overset{\overset{0}{-}}{(k)}\overset{\overset{0}{+}}{(k+1)}\cdots \overset{\overset{0}{+}}{(n)}$. Since conjugation by elements of centralizer of $a$ preserves distance in \graph, without loss of generality we may take $g$ to be of the following form for some integer $r$ with $0 \leq r \leq l$ and  even integers $\lambda_i$:

$$g =  
\overset{\overset{0}{+}}{(1)}\cdots \overset{\overset{0}{+}}{(r)}\overset{\overset{\lambda_{r+1}}{-}}{(r+1)}\cdots \overset{\overset{\lambda_{r+k}}{-}}{(r+k)}
\overset{\overset{0}{+}}{(r+k+1)} \cdots \overset{\overset{0}{+}}{(n)}.$$
Now consider the following sequence, where $p=\lfloor\frac{k}{l}\rfloor$ and $\lambda_{r+k+1}$, $\ldots$, $\lambda_{n}$ are arbitrary even integers.
\begin{align*} g_0 &= \overset{\overset{0}{+}}{(1)}\cdots \overset{\overset{0}{+}}{(l)}\overset{\overset{\lambda_{l+1}}{-}}{(l+1)}\cdots \overset{\overset{\lambda_{n}}{-}}{(n)}
\\
g_1 &= \overset{\overset{0}{-}}{(1)}\cdots \overset{\overset{0}{-}}{(l)}\overset{\overset{0}{+}}{(l+1)}\cdots \overset{\overset{0}{+}}{(2l)}\overset{\overset{\lambda_{2l+1}}{-}}{(2l+1)}\cdots \overset{\overset{\lambda_{n}}{-}}{(n)}
\\
g_2 &= \overset{\overset{0}{-}}{(1)}\cdots \overset{\overset{0}{-}}{(2l)}\overset{\overset{0}{+}}{(2l+1)}\cdots \overset{\overset{0}{+}}{(3l)}\overset{\overset{\lambda_{3l+1}}{-}}{(3l+1)}\cdots \overset{\overset{\lambda_{n}}{-}}{(n)}
\\ 
&\vdots\\
g_{p-1}&=\overset{\overset{0}{-}}{(1)}\cdots \overset{\overset{0}{-}}{((p-1)l)}\overset{\overset{0}{+}}{((p-1)l+1)}\cdots \overset{\overset{0}{+}}{(pl)}\overset{\overset{\lambda_{pl+1}}{-}}{(pl+1)}\cdots \overset{\overset{\lambda_{n}}{-}}{(n)}\\
g_{p}&=\overset{\overset{0}{-}}{(1)}\cdots \overset{\overset{0}{-}}{(pl)}\overset{\overset{0}{+}}{(pl+1)}\cdots \overset{\overset{0}{+}}{((p+1)l)}\overset{\overset{\lambda_{(p+1)l+1}}{-}}{((p+1)l+1)}\cdots \overset{\overset{\lambda_{n}}{-}}{(n)}
\end{align*}
It is clear that $g_i$ commutes with $g_{i+1}$ for $0 \leq i < p$. Moreover $g$ commutes with $g_0$, and $g_p$ commutes with $a$. If $l$ divides $k$, then $g_p=a$, which implies that $d(g,a) \leq p+1 = \frac{k}{l} + 1 = \lceil\frac{n}{l}\rceil$. If $l$ does not divide $k$, then $d(g,a) \leq p + 2 = \lfloor\frac{k}{l}\rfloor + 2 = \lceil\frac{n}{l}\rceil$. The diameter of the graph in each case does equal this bound because at each stage of a path from $g$ to $a$ we can add at most $l$ to the number of correct negative 1-cycles, but this requires the element being considered to share no fixed points with $a$. Hence, for example the element  $$g=
\overset{\overset{2}{-}}{(1)}\cdots \overset{\overset{2}{-}}{(k)}\overset{\overset{0}{+}}{(k_e+k_o+1)}\cdots \overset{\overset{0}{+}}{(n)}$$
must be distance at least $\lceil \frac{n-l}{l} \rceil + 1 = \lceil \frac{n}{l}\rceil$ from $a$.\\

The remaining case is when $k_e$ and $k_o$ are both nonzero, and $2l < n$. \\

For any $x \in X$ define $c(x)$ to be the number of `correct' negative 1-cycles in $x$. That is, cycles $\overset{\overset{0}{-}}{(\alpha)}$ where $1 \leq \alpha \leq k_e$ or $\overset{\overset{1}{-}}{(\beta)}$ where $k_e < \beta \leq k_e + k_o$. Thus, for example, $c(a) = k_e + k_o$. We say that other negative 1-cycles are `incorrect'. 

Let $g \in X$. Then $g$ commutes with an element $x_0$ whose positive 1-cycles are $\overset{\overset{0}{+}}{(1)}\cdots \overset{\overset{0}{+}}{(l)}$. We will describe a sequence $x_0, x_1, \ldots$ where at each stage $x_i$ is an element of $X$ such that $c(x_i) \geq li$ and the positive 1-cycles of $x_i$ are $\overset{\overset{0}{+}}{(\alpha_1)}\cdots \overset{\overset{0}{+}}{(\alpha_r)}\overset{\overset{0}{+}}{(\beta_1)}\cdots \overset{\overset{0}{+}}{(\beta_s)}$ for some $\alpha_j, \beta_j$ where $1\leq \alpha_j \leq k_e$ and $k_e < \beta_j \leq k_e + k_o$, with $r+s = l$. Moreover, for $i > 0$, $x_i$ will commute with $x_{i-1}$.\\

Observe that the positive 1-cycles of $x_0$ have the required form, and $c(x_0) \geq 0\times l = 0$. Assume that we have $x_0, \ldots, x_i$ and let the positive 1-cycles of $x_i$ be $\overset{\overset{0}{+}}{(\alpha_1)}\cdots \overset{\overset{0}{+}}{(\alpha_r)}\overset{\overset{0}{+}}{(\beta_1)}\cdots \overset{\overset{0}{+}}{(\beta_s)}$. To form $x_{i+1}$ we look for incorrect cycles $\overset{\overset{\lambda_1}{-}}{(\gamma_1)}\cdots \overset{\overset{\lambda_r}{-}}{(\gamma_r)}\overset{\overset{\mu_1}{-}}{(\delta_1)}\cdots \overset{\overset{\mu_s}{-}}{(\delta_s)}$ of $x_i$, where each $\lambda_j$ is even, each $\mu_j$ is odd, $\gamma_j \leq n-l$ and $\delta_j \leq n-l$.  If such cycles can be found, then $x_i$ commutes with $x_{i+1}$ where $x_{i+1}$ is given by replacing the cycles $\overset{\overset{0}{+}}{(\alpha_1)}\cdots \overset{\overset{0}{+}}{(\alpha_r)}\overset{\overset{0}{+}}{(\beta_1)}\cdots \overset{\overset{0}{+}}{(\beta_s)}\overset{\overset{\lambda_1}{-}}{(\gamma_1)}\cdots \overset{\overset{\lambda_r}{-}}{(\gamma_r)}\overset{\overset{\mu_1}{-}}{(\delta_1)}\cdots \overset{\overset{\mu_s}{-}}{(\delta_s)}$ of $x_i$ with  $\overset{\overset{0}{-}}{(\alpha_1)}\cdots \overset{\overset{0}{-}}{(\alpha_r)}\overset{\overset{1}{-}}{(\beta_1)}\cdots \overset{\overset{1}{-}}{(\beta_s)}\overset{\overset{0}{+}}{(\gamma_1)}\cdots \overset{\overset{0}{+}}{(\gamma_r)}\overset{\overset{0}{+}}{(\delta_1)}\cdots \overset{\overset{0}{+}}{(\delta_s)}$, and leaving all other cycles unchanged. It is clear that $x_{i+1}$ commutes with $x_i$;  moreover $x_{i+1} \in X$, has the appropriate positive 1-cycles and $c(x_{i+1}) \geq (i+1)l$. 

This sequence can continue until we have some $x_i$ with cycles $\overset{\overset{0}{+}}{(\alpha_1)}\cdots \overset{\overset{0}{+}}{(\alpha_r)}\overset{\overset{0}{+}}{(\beta_1)}\cdots \overset{\overset{0}{+}}{(\beta_s)}$ but (without loss of generality) fewer than $r$ incorrect cycles $\overset{\overset{\lambda}{-}}{(\gamma)}$ where $\lambda$ is even and $\gamma \leq n-l$. Suppose there are exactly $t$ such cycles (with $t < r$). So $x_i$ has the cycles $\overset{\overset{\lambda_1}{-}}{(\gamma_1)}\cdots \overset{\overset{\lambda_t}{-}}{(\gamma_t)}$. Since $\alpha_j < k_e$ for each $j$ there must be $\ep_{t+1}, \ldots, \ep_n$ where $n-l < \ep_j \leq n$ for each $\ep_j$, and even numbers $\lambda_j$ such that $x_i$ has the cycles $\overset{\overset{\lambda_1}{-}}{(\gamma_1)}\cdots \overset{\overset{\lambda_t}{-}}{(\gamma_t)}\overset{\overset{\lambda_{t+1}}{-}}{(\ep_{t+1})}\cdots \overset{\overset{\lambda_r}{-}}{(\ep_r)}$. Similarly $x_i$ has cycles $\overset{\overset{\mu_1}{-}}{(\delta_1)}\cdots \overset{\overset{\mu_t'}{-}}{(\delta_t')}\overset{\overset{\mu_{t'+1}}{-}}{(\ep'_{t'+1})}\cdots \overset{\overset{\mu_r}{-}}{(\ep'_s)}$ for odd $\mu$, some $t' \leq s$, $\delta_j \leq n-l$ and $\ep'_j > n-l$.

We now define $y$ to be $x_{i}$ with $$\overset{\overset{0}{+}}{(\alpha_1)}\cdots \overset{\overset{0}{+}}{(\alpha_r)}\overset{\overset{0}{+}}{(\beta_1)}\cdots \overset{\overset{0}{+}}{(\beta_s)} \overset{\overset{\lambda_1}{-}}{(\gamma_1)}\cdots \overset{\overset{\lambda_r}{-}}{(\gamma_t)}\overset{\overset{\lambda_{t+1}}{-}}{(\ep_{t+1})}\cdots \overset{\overset{\lambda_r}{-}}{(\ep_r)}\overset{\overset{\mu_1}{-}}{(\delta_1)}\cdots \overset{\overset{\mu_t'}{-}}{(\delta_t')}\overset{\overset{\mu_{t'+1}}{-}}{(\ep'_{t'+1})}\cdots \overset{\overset{\mu_r}{-}}{(\ep'_s)}$$
replaced with
$$\overset{\overset{0}{-}}{(\alpha_1)}\cdots \overset{\overset{0}{-}}{(\alpha_r)}\overset{\overset{1}{-}}{(\beta_1)}\cdots \overset{\overset{1}{-}}{(\beta_s)} \overset{\overset{0}{+}}{(\gamma_1)}\cdots \overset{\overset{0}{+}}{(\gamma_t)}\overset{\overset{0}{+}}{(\ep_{t+1})}\cdots \overset{\overset{0}{+}}{(\ep_r)}\overset{\overset{0}{+}}{(\delta_1)}\cdots \overset{\overset{0}{+}}{(\delta_t')}\overset{\overset{0}{+}}{(\ep'_{t'+1})}\cdots \overset{\overset{0}{+}}{(\ep'_s)}.$$
Now $x_i$ commutes with $y$, and $c(y) \geq (i+1)l$. We observe that $d(g,y) \leq i+2$. Notice that every $\beta$ with $k_e < \beta \leq k_e + k_o$ is either a fixed point of $y$ or appears in a cycle $\overset{\overset{\mu}{-}}{(\beta)}$ with $\mu$ odd. Every incorrect even negative 1-cycle  of $y$ features a fixed point of $a$, so
for some $q \leq l$, and conjugating $y$ by a suitable element of the centralizer of $a$ if necessary, we can assume that the even negative 1-cycles of $y$ are
$$ \overset{\overset{\lambda_1}{-}}{(n-l+1)}\cdots \overset{\overset{\lambda_q}{-}}{(n-l+q)}\overset{\overset{0}{-}}{(q+1)}\cdots \overset{\overset{0}{-}}{(k_e)}.$$
Now $y$ has at least $(i+1)l$ correct negative 1-cycles. If we ignore the even negative 1-cycles and the correct odd negative 1-cycles, then $n - (i+1)l - q$ cycles remain (including $l$ fixed points). We can use the result for $k_e = 0$ on this remaining part of $y$ to see that $y$ is distance at most $\lceil\frac{n -  (i+1)l - q}{l}\rceil$
from the element $z$ of $X$ given by
$$\overset{\overset{\lambda_1}{-}}{(n-l+1)}\cdots \overset{\overset{\lambda_q}{-}}{(n-l+q)}\overset{\overset{0}{-}}{(q+1)}\cdots \overset{\overset{0}{-}}{(k_e)}\overset{\overset{0}{+}}{(1)}\cdots \overset{\overset{0}{+}}{(q)}\overset{\overset{0}{+}}{(n-l+q+1)}\cdots \overset{\overset{0}{+}}{(n)}\overset{\overset{1}{-}}{(k_e+1)}\cdots \overset{\overset{1}{-}}{(k_e+k_o)}.$$ 
Now $z$ commutes with $a$, and so 
$d(g,a) \leq d(g,y) + d(y,z) + 1 = i+2 +  \lceil\frac{n - q - (i+1)l}{l}\rceil + 1 = 2 + \lceil\frac{n-q}{l}\rceil$.
If $l = 1$ and $q = 0$, then in fact $z = a$ so $d(g,a) = n+1$. If $q=1$ then again $d(g,a) = n+1$. If $l > 1$ then we have $d(g,a) \leq \lceil \frac{n}{l}\rceil + 2$. To give a lower bound on the diameter consider
$$g=
\overset{\overset{2}{-}}{(1)}\cdots \overset{\overset{2}{-}}{(k_e)}\overset{\overset{3}{-}}{(k_e+1)}\cdots  \overset{\overset{3}{-}}{(k_e+k_o)}\overset{\overset{0}{+}}{(k_e+k_o+1)}\cdots \overset{\overset{0}{+}}{(n)}.$$ 
To create $l$ additional correct negative 1-cycles at each stage of a path from $g$ to $a$ one requires each fixed point to be a point not fixed by $a$; moreover in this case completion of the process for, say, the even negative 1-cycles requires the recreation of at least one fixed point between $n-l+1$ and $n$ and hence fewer than $l$ correct negative 1-cycles being created at the next stage. Thus when $l=1$ we have $d(g,a) \geq n+1$, and when $l > 1$ we have $d(g,a) \geq \lceil \frac{n}{l}\rceil$. This completes the proof. \end{proof}
   
 From now on, assume that $m>0$. Then we can take
   $$ a={\overset{\overset{0}{+}}{(1\; 2)}} \cdots \overset{\overset{0}{+}}{(2m-1\;2m)}
   \overset{\overset{0}{-}}{(2m+1)}\cdots \overset{\overset{0}{-}}{(2m+k_e)}
   \overset{\overset{1}{-}}{(2m+k_e+1)}\cdots 
   \overset{\overset{1}{-}}{(n-l)}\overset{\overset{0}{+}}{(n-l+1)}\cdots 
   \overset{\overset{0}{+}}{(n)},$$
   where $2m+k_e+k_o +l=n$.

\begin{prop}
\label{m>0k=0} Suppose $k_e = k_o = 0$. If $2m = n$, then $\diam \C(G,X) \leq 3$. If $n \geq 5$ and $l \geq 2$ then $\diam\graph \leq 5$.
\end{prop}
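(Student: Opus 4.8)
The plan is to pass to the underlying Weyl group $W$ of type $B_n$, use the known diameter bounds there, and then lift a short $W$-path back to $G$ at the cost of just one extra vertex. Since $k_e=k_o=0$, every element of $X$ is a product of $m$ signed transpositions (each of the form $\overset{\overset{\lambda}{+}}{(\alpha\beta)}$ or $\overset{\overset{\lambda}{-}}{(\alpha\beta)}$) together with $l$ positive fixed points, and has no negative $1$-cycles. Projecting by $g\mapsto\hat g$, the class $\hat X$ is the $B_n$-class of Theorem~\ref{finite} with $m$ transpositions, $l$ positive and $0$ negative fixed points, so there $k=\max(l,0)=l$. Hence $d(\hat g,\hat a)\le 2$ when $l=0$ (part (ii)); and when $l\ge 2$ we have $d(\hat g,\hat a)\le 4$ if $n>5$ (part (iv)), while if $n=5$ then necessarily $m=1$, $l=k=3$, so $d(\hat g,\hat a)\le 2$ (part (v)).

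The core is a lifting statement: if $d(\hat g,\hat a)=D\ge 1$ then $d(g,a)\le D+1$, and if $\hat g=\hat a$ then $d(g,a)\le 2$. Granting this, Case A ($2m=n$, so $l=0$ and $D\le 2$) gives $d(g,a)\le 3$, and Case B ($l\ge 2$) gives $d(g,a)\le 5$ (via $D\le 4$ when $n>5$ and $D\le 2$ when $n=5$). To prove the lifting statement, fix a geodesic $\hat g=\hat y_0,\hat y_1,\ldots,\hat y_D=\hat a$ in $\C(W,\hat X)$. I construct lifts from the $a$-end: set $y_D=a$ and, for $i$ decreasing, take $y_i\in X$ to be the lift of $\hat y_i$ in which every transposition carries a plus sign (``all-plus''), with integer labels chosen so that $y_i$ commutes with the already-built $y_{i+1}$. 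I then connect $g$ to $y_1$ through one additional vertex $w$, a lift of $\hat y_1$ chosen to commute with both $g$ and $y_1$, giving the path $g,w,y_1,\ldots,y_{D-1},a$ of length $D+1$. (When $D=0$ the all-minus lift $w$ of $\hat a=\hat g$ yields $g,w,a$.)

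Existence of all these lifts is a block-by-block check. Two commuting involutions in $\hat X$ generate a group of order at most $4$, so $\{1,\ldots,n\}$ splits into blocks on which the pair is a common transposition, a common fixed point, a transposition opposite two fixed points, or a ``square'' (one side $(\alpha\beta)(\gamma\delta)$, the other $(\alpha\gamma)(\beta\delta)$). On each block the commuting condition for the lifts is governed by Lemmas~\ref{1cycle},~\ref{2cycle} and~\ref{doubletrans}, and because the blocks are disjoint these conditions are independent. For consecutive all-plus lifts, two all-plus transpositions on a square commute once the single linear label equation of Lemma~\ref{doubletrans}(i) is solved over $\zz$, on a common pair the labels are simply forced to agree, and the other block types impose nothing. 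For the connector $w$ I exploit that a minus sign makes a transposition commute with any transposition on the same pair (Lemma~\ref{2cycle}(i)) and with two positive fixed points (Lemma~\ref{2cycle}(ii)); so I take $w$ all-minus wherever $g$ is sign-uniform on a square, and mixed (with its single plus-label pinned to match $y_1$) wherever $g$ is sign-mixed, thereby satisfying Lemma~\ref{doubletrans} against $g$ and the same-matching conditions against the all-plus $y_1$ at once.

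I expect the main obstacle to be precisely this last point: forcing $w$ to commute with both $g$ and $y_1$ simultaneously. The tension is that $w$ carries the matching of $y_1$ yet must respect the possibly mixed sign pattern of $g$ across a square. The resolution is the observation, read off from Lemma~\ref{doubletrans}, that on a square the two sides must be either both sign-uniform or both sign-mixed; this dictates an admissible sign pattern for $w$ from that of $g$, after which the remaining integer label equations are solvable. Some care is also needed to confirm that building the all-plus path strictly from the $a$-end avoids any two-sided simultaneous label constraint (each transposition lies in a single block, and each $y_i$ is constrained only by its successor), and to treat the degenerate cases $D\in\{0,1\}$ directly.
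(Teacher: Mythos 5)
Your argument is correct in substance, but it takes a much longer road than the paper to the same destination. Both proofs reduce to Theorem~\ref{finite} and pay one extra edge for the labels, and your identification of the relevant parts is right ((ii) for $2m=n$; (iv) for $l\geq 2$, $n>5$; and (v) for $n=5$, where $l\geq 2$ forces $m=1$, $k=3$, giving diameter $2$) --- indeed your bookkeeping of the $B_n$ class is more explicit than the paper's. The difference is in the lifting. You lift an entire geodesic of $\C(W,\hat X)$ vertex by vertex, which requires the block decomposition of a commuting pair, the solvability of the label equations along the path, and the delicate two-sided connector $w$. The paper does all of this with a single auxiliary vertex: given $x=(h,\bv)$, it takes $y=(hw_0,\mathbf 0)$, where $w_0$ is the central involution of $W$, so that every sign of $h$ is flipped and every label is zero. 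By Lemmas~\ref{1cycle} and~\ref{2cycle}(i),(ii), opposite signs on the same pair kill all label constraints, so $y$ commutes with $x$ whatever the labels of $x$ are; and $y$ lies in the zero-label copy $\{(\sigma,\mathbf 0):\sigma\in\hat X\}$ of $\C(W,\hat X)$, which contains $a$, giving $d(x,a)\leq 1+\diam\C(W,\hat X)$ immediately. (Strictly one should flip signs only on the $2m$ letters lying in transpositions when $l>0$, so that $hw_0$ remains in $\hat X$; the paper is slightly loose on this point.) One conceptual slip in your write-up is worth flagging: the sign of a transposition is part of the element $\hat y_i\in W$, not a free parameter of the lift --- the only freedom in lifting $\sigma\in\hat X$ to $(\sigma,\bv)\in X$ is the choice of $\bv$. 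Your ``all-plus lifts'' are therefore lifts not of the $\hat y_i$ but of their all-plus modifications; your block-by-block analysis does show that these modifications still form a path with suitable labels, so the argument survives, but the all-minus/zero-label trick would have let you dispense with the geodesic lifting, the connector, and this issue all at once.
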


 \begin{proof} Let $x \in X$. We can write
 $a=(g,{\bf 0})$ and $x=(h,\bv)$, where $g, h$ are conjugate elements of the underlying Weyl group $W$. Define an element $y=(h',{\bf 0})$, by $h'=hw_0$, where $w_0$ is the unique central involution of $W$. The result is that every minus sign in $h$ corresponds to a plus sign in $h'$, and  every plus sign corresponds to a minus sign. Since $k=0$, $h$ and $h'$ are conjugate in $W$. So,  by Theorem \ref{finite}(i), $d(g,h')\leq 2$ when $2m=n$ and $d(g,h') \leq 4$ otherwise. Now $d(a,y) = d(g,h')$ and $y$ commutes with $x$ by Lemmas \ref{1cycle} and  \ref{2cycle}. The result follows immediately.  \end{proof}

\begin{prop} \label{mnonzerol=0} Suppose $m \geq 1$, $l=0$, one of $k_e$ and $k_o$ is zero, and either $m > 1$ or $\max\{k_e, k_0\} \geq 3$ (or both). Then \graph\ is connected with diameter at most $n$. If $x \in X$ and $d(x,a) = n$ then $m=1$ and the transposition of $x$ is $\overset{\overset{\lambda}{\ast}}{(1\; 2)}$ for some $\lambda$. \end{prop}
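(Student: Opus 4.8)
The plan is to reduce at once to the case $k_o=0$, the case $k_e=0$ being symmetric under interchanging the roles of the two parities; then every element of $X$ consists of $m$ transpositions and $k_e$ negative $1$-cycles, all of whose labels are even, while $a$ has all labels $0$ and positive transpositions on the pairs $\{1,2\},\dots,\{2m-1,2m\}$. Since conjugation by the centraliser of $a$ is a distance-preserving automorphism of \graph, I would first normalise an arbitrary $x\in X$ by it, permuting the negative $1$-cycle positions and the transposition pairs into a standard order, so that only the labels and the signs and locations of the transpositions remain to be corrected.

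The engine of the argument is a single ``jump'' move governed by Lemma~\ref{2cycle}(iii): an element carrying $\overset{\overset{\lambda}{+}}{(\alpha\beta)}$ together with $1$-cycles $\overset{\overset{\mu}{-}}{(\gamma)}\,\overset{\overset{\nu}{-}}{(\delta)}$ commutes with the element obtained by replacing these by $\overset{\overset{p}{-}}{(\alpha)}\,\overset{\overset{q}{-}}{(\beta)}$ and $\overset{\overset{\lambda'}{+}}{(\gamma\delta)}$ precisely when $p-q=2\lambda$ and $\mu-\nu=2\lambda'$ (with the analogous sum-conditions for a negative transposition, and with all other cycles left unchanged, which by Lemmas~\ref{1cycle} and~\ref{2cycle}(i) forces their labels to agree). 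In particular a transposition at label $0$ landing on an equal-label pair stays at label $0$ and, on departing, can deposit correct cycles $\overset{\overset{0}{-}}{(\alpha)}\,\overset{\overset{0}{-}}{(\beta)}$. I would build a path to $a$ by sweeping the transposition(s) through the incorrect negative $1$-cycles with these jumps, correcting them in batches, and finally restoring the transpositions themselves to $\overset{\overset{0}{+}}{(2i-1\;2i)}$.

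For $m\ge 2$ there are at least two transpositions, so Lemma~\ref{doubletrans} supplies richer moves in which the transpositions are repositioned and re-signed \emph{while} labels are corrected; here the type $B_n$ bound of Theorem~\ref{finite} controls the purely positional work, and the spare transposition gives enough room to keep $d(x,a)<n$. The extremal situation is $m=1$: the lone transposition is then the only tool for altering any $1$-cycle label, so it must visit essentially every position, and this is what produces the bound $n$. I would track a potential counting the number of incorrect negative $1$-cycles together with the label currently carried by the transposition, show that each jump decreases it in a controlled way to obtain $d(x,a)\le n$, and check that equality can hold only when the transposition begins at the label-$0$-compatible pair $\{1,2\}$ with no repositioning to exploit, which is exactly the stated condition.

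The hardest part will be the precise step count in the $m=1$ sweep and the equality analysis. The obstruction is that correcting a $1$-cycle label is never free: routing the change through the transposition via Lemma~\ref{2cycle}(iii) contaminates the transposition's own label, which must then be discharged by further jumps, so a careless sweep overshoots $n$. Attaining the tight bound requires ordering the pairs so that the transposition arrives at label $0$ exactly when it must deposit correct cycles, and showing that whenever the transposition of $x$ does not already sit on $\{1,2\}$ (or whenever $m>1$) the necessary repositioning simultaneously corrects a cycle and hence strictly shortens the path. The hypotheses $m>1$ or $\max\{k_e,k_o\}\ge 3$ enter precisely here: they guarantee that enough like-parity $1$-cycles, or a spare transposition, are always available to keep the sweep moving, thereby avoiding the stuck configurations responsible for disconnectedness in Theorem~\ref{mainhalf}.
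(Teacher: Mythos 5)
Your toolkit is the right one (Lemma \ref{2cycle}(iii) as the ``jump'', Lemma \ref{2cycle}(i) to re-sign and re-label a transposition in place, Lemma \ref{doubletrans} when $m\geq 2$, and reduction to $k_o=0$ by symmetry), and your description of the single commuting move is accurate. But as it stands this is a plan rather than a proof, and the gap sits exactly where you locate it. The potential you propose --- number of incorrect negative $1$-cycles together with the label carried by the transposition --- does not decrease monotonically under a jump: departing from $\{\alpha,\beta\}$ deposits two $1$-cycles whose labels are constrained only by $p-q=2\lambda$ (or $p+q=2\lambda$), so unless the transposition's label has already been discharged the jump \emph{creates} two incorrect cycles while destroying two others, and the ``controlled decrease'' is precisely the assertion that needs proving. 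You also need the sharper dichotomy (distance at most $n-1$ unless the transposition of $x$ sits on $\{1,2\}$) to come out of the count, and you give no base case or terminal configuration from which to anchor it. The paper closes this by a different device: induction on $n$. If the transposition of $x$ contains some $\alpha>2$, one commuting step plants the correct cycle $\overset{\overset{0}{-}}{(\alpha)}$ (choosing the new transposition off $\{1,2\}$), and one recurses in $G_{\{1,\ldots,n\}\setminus\{\alpha\}}$ with the inductive statement ``$d\leq n-1$ when the transposition avoids $\{1,2\}$''; the base case $n=5$, $m=1$, $k=3$ is the explicitly computed graph of Figure \ref{l0}. Without either that induction or a completed step-count, the bound $n$ and the characterisation of the extremal $x$ are not established.

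A second, smaller gap is the $m\geq 2$ case. Theorem \ref{finite} bounds distances in the quotient graph $\hatgraph$, but Lemma \ref{easy} only transfers information in the other direction: a path in the type $B_n$ graph does not automatically lift to $\graph$, since at each step the labels must be chosen compatibly. So ``the type $B_n$ bound controls the purely positional work'' needs an explicit lifting argument. The paper avoids this by first steering $x$ (in at most three steps, via Lemmas \ref{2cycle} and \ref{doubletrans}) to an element with the correct cycles $\overset{\overset{0}{-}}{(n-1)}\,\overset{\overset{0}{-}}{(n)}$, then deleting those coordinates and invoking Proposition \ref{m>0k=0} in $G_{n-2}$, where the lift of the $B$-type path is done explicitly through the element $(hw_0,\mathbf{0})$. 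You would need to supply an analogous lifting step to make your appeal to Theorem \ref{finite} legitimate.
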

\begin{proof}
Let $x \in X$. Without loss of generality $k_o = 0$. We start with the case $m=1$. We will show that if the transposition of $x$ is $\overset{\overset{\lambda}{\ast}}{(1\; 2)}$ for some $\lambda$, then $d(x,a) \leq n$. Otherwise $d(x,a) \leq n-1$. The graph for $n=5$ (and hence $m=1$) is Figure \ref{l0}. We can see from this graph that the inductive hypothesis holds for $n=5$, so assume $n \geq 6$.  Suppose the transposition of $x$ contains some $\alpha$ with $\alpha > 2$. Then by Lemma \ref{2cycle}(iii) $x$ commutes with some $y \in X$ such that $y$ has the 1-cycle  $\overset{\overset{0}{-}}{(\alpha)}$ and such that the transposition of $y$ is not $\overset{\overset{\lambda}{\ast}}{(1\; 2)}$. Ignoring the cycle $\overset{\overset{0}{-}}{(\alpha)}$ we can work within $G_{\{1,\ldots, \alpha-1, \alpha+1, \ldots, n\}}$, to see that inductively $d(y,a) \leq n-2$. Hence $d(x,a) \leq n-1$. If the transposition of $x$ is $\overset{\overset{\lambda}{\ast}}{(1\; 2)}$ then $x$ certainly commutes with an element of $X$ which does not have this transposition. So $d(x,a) \leq n$ as required.\\ 

Now we assume $m \geq 2$ and proceed by induction on $k_e$ to show that $\diam\graph \leq n-1$. Suppose $k_e = 2$. Then $x$ is distance at most 2 from an element $y$ of $X$ which has the transposition $\overset{\overset{0}{+/-}}{(n-1 \;\; n)}$.  To see this, note that if both $n-1$ and $n$ appear in transpositions of $x$, or if both appear in 1-cycles of $x$, then Lemma \ref{2cycle} or Lemma \ref{doubletrans}, as appropriate, implies that $x$ commutes with some $x'$ in $X$ which contains a transposition of the form $\overset{\overset{\lambda}{-/+}}{(n-1\;\; n)}$ for some $\lambda$. Then $x'$ commutes with $y$. If on the other hand, $x$ contains (for example) the 1-cycle $\overset{\overset{\mu}{-}}{(n)}$ and $n-1$ appears in a transposition $\overset{\overset{\sigma}{+/-}}{(\ep\;\; n-1)}$ for some $\ep$ less than $n-1$, then $x$ commutes with $x''$ in $X$ containing the transpositions $\overset{\overset{\lambda}{-}}{(\ep\;\; n-1)}$ and $\overset{\overset{\sigma}{-}}{(\ep'\;\; n)}$ for some $\lambda$ and $\ep'$. Lemma \ref{doubletrans} now implies that $x$ commutes with an appropriate $y$, in particular one containing the transpositions $\overset{\overset{\sigma+\lambda}{-}}{(\ep\; \ep')}$ and $\overset{\overset{0}{-}}{(n-1\;\; n)}$. Now $y$ in turn commutes with some $z$ in $X$ with the 1-cycles $\overset{\overset{0}{-}}{(n-1)}$ and $\overset{\overset{0}{-}}{(n)}$. If we ignore these cycles and work in $G_{n-2}$, then Table \ref{tableG4} implies that when $n=6$ $d(z,a) \leq 2$, and when $n>6$ Proposition \ref{m>0k=0} tells us that $d(z, a) \leq 3$. Therefore $\diam\graph \leq n-1$. 

Finally, suppose $m\geq 2$ and $k_e > 2$.   Suppose there is some transposition of $x$ containing an element $\alpha$ with $\alpha > 2m$. Then by Lemma \ref{2cycle}(iii) $x$ commutes with some $y \in X$ such that $y$ has the 1-cycle  $\overset{\overset{0}{-}}{(\alpha)}$. By induction $d(y,a) \leq n-2$. Hence $d(x,a) \leq n-1$. The final possibility is that the elements of the transpositions of $x$ are $\{1, 2, \ldots, 2m\}$.  Since $m > 1$ we can use Lemma~\ref{doubletrans} to show that $x$ commutes with some $y$ in $X$ containing the transposition $\overset{\overset{0}{\ast}}{(1\; 2)}$. Working in $G_{\{3, 4, \ldots, n\}}$ (using the case $m=1$ and induction on $m$) we see that $d(y,a) \leq n-2$. Hence $d(x,a) \leq n-1$, which completes the proof of Proposition \ref{mnonzerol=0}.  
\end{proof}

\begin{lemma} \label{kois2}
	Suppose $m=1$, $l=0$, $k_o = 2$, $k_e \geq 3$ and $x \in X$. 
	Then $\graph$ is connected with diameter at most $n+1$. 
	\end{lemma}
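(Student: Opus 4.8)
The plan is to reduce to the case $k_o=0$, which is already handled by Proposition \ref{mnonzerol=0}. Here
$$a = \overset{\overset{0}{+}}{(1\;2)}\,\overset{\overset{0}{-}}{(3)}\cdots\overset{\overset{0}{-}}{(k_e+2)}\,\overset{\overset{1}{-}}{(n-1)}\,\overset{\overset{1}{-}}{(n)},$$
with $n=k_e+4$. I would first bring $x$, in a bounded number of moves, to an element $y\in X$ whose two odd negative $1$-cycles are exactly those of $a$, namely $\overset{\overset{1}{-}}{(n-1)}\overset{\overset{1}{-}}{(n)}$. The remaining cycles of $y$ (its single transposition together with its $k_e$ even negative $1$-cycles) are then supported on $\{1,\dots,n-2\}$ and form an involution of $G_{n-2}$ of labelled cycle type $(1,k_e,0,0)$. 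Since $k_e\ge 3$, Proposition \ref{mnonzerol=0} applies in $G_{n-2}$ and yields a path of length at most $n-2$ from $y|_{\{1,\dots,n-2\}}$ to $a|_{\{1,\dots,n-2\}}$; because that path never disturbs positions $n-1,n$, each of its elements commutes with $\overset{\overset{1}{-}}{(n-1)}\overset{\overset{1}{-}}{(n)}$ and the path lifts to $G_n$, ending at $a$. The bound $n+1$ should then emerge by balancing the length of the first phase against this $G_{n-2}$-estimate.

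The whole difficulty concentrates in the first phase, because labels of negative $1$-cycles cannot be altered individually along an edge (Lemma \ref{1cycle}): the only lever is the one transposition. Two facts drive the construction. First, by Lemma \ref{2cycle}(iii) a transposition on a pair may be traded for two negative $1$-cycles one of whose labels is freely prescribable (in particular $1$), and, conversely, two negative $1$-cycles of equal parity may be absorbed into a transposition; this lets me deposit a correctly labelled odd cycle at the cost of relocating the transposition. Second, Lemma \ref{2cycle}(i) lets me switch a transposition between positive and negative type at distance one. Combining them, a pair of odd cycles sitting at $\{n-1,n\}$ can be relabelled to $\overset{\overset{1}{-}}{(n-1)}\overset{\overset{1}{-}}{(n)}$ by passing to a positive transposition there (whose value is forced to half their difference), toggling it to negative type, and returning to a pair with prescribed sum $2$, i.e. to labels $(1,1)$. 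Throughout I would exploit that conjugation by the centraliser of $a$ is distance-preserving and fixes $a$, using it to normalise the positions of the cycles of $x$ within the blocks of $a$ and so reduce the number of configurations.

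The main obstacle is exactly this first phase: routing the two odd cycles into positions $n-1,n$ and correcting their labels to $1$ while staying inside the slack that keeps the total at $n+1$. The parity invariant revealed by Lemma \ref{2cycle}(iii) — relabelling a fixed odd pair through a single intermediate transposition preserves either their sum or their difference — is precisely why the diameter is $n+1$ rather than the $n$ of Proposition \ref{mnonzerol=0}, and it is what forces the extra move. Verifying that no configuration of the transposition and the two odd cycles of $x$ relative to the blocks of $a$ costs more than this is a genuine case analysis, and where the naive budget is threatened I would fall back on an induction on $k_e$ that reduces to smaller instances of this very lemma. I would anchor the whole argument with the base case $n=7$ (that is, $k_e=3$), handled by the direct and tabular methods used elsewhere in the paper, which simultaneously confirms that the bound $n+1$ is attained and not merely an upper estimate.
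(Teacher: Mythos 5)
Your overall strategy is the same as the paper's: use the transposition as the only lever (via Lemma \ref{2cycle}) to normalise the two odd negative $1$-cycles to $\overset{\overset{1}{-}}{(n-1)}\,\overset{\overset{1}{-}}{(n)}$ in a few moves, then finish with Proposition \ref{mnonzerol=0} applied in $G_{n-2}$, exploiting its refinement that the distance is at most $n-3$ there unless the transposition sits on $(1\,2)$. You have also correctly located the difficulty. But the proposal stops exactly where the proof begins: the entire content of the lemma is the verification that \emph{every} configuration of the transposition and the two odd cycles relative to the blocks of $a$ completes the normalisation phase within the available slack, and this is deferred to ``a genuine case analysis'' that is never carried out. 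Worse, your uniform plan provably fails in at least one configuration. If the transposition of $x$ is $\overset{\overset{\lambda}{\pm}}{(1\,2)}$ and its odd cycles lie inside the even block of $a$, then routing those odd cycles to positions $n-1,n$ forces you first to park the transposition on $(n-1\,n)$, where its label is \emph{not} free (it is pinned by Lemma \ref{2cycle}(iii) to the even labels already sitting at $n-1,n$), so the resulting odd pair need not be $(1,1)$ and your positive/negative toggle costs two further steps; the first phase then takes about five moves and the total exceeds $n+1$. This is precisely why the paper does \emph{not} normalise to $n-1,n$ in that case: it instead converts the transposition at $(1\,2)$ into a pair of odd cycles at $1,2$, runs Proposition \ref{mnonzerol=0} towards an auxiliary element $b$ carrying $\overset{\overset{0}{+}}{(n-1\;n)}$ in place of the odd cycles, and uses $d(b,a)=2$. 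Your proposed fallback (induction on $k_e$) also needs care: deleting a position can drop the ambient class to one with $k_o=1$ or $\max(k_e,k_o)\le 2$, which is disconnected by Theorem \ref{mainhalf}, so the reduction is only legitimate for certain positions. Finally, the lemma claims only an upper bound of $n+1$; your closing remark about the bound being ``attained'' is neither required nor established.
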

	
	\begin{proof}
		Let $x \in X$. The distance of $x$ from $a$ will largely hinge on the whereabouts of $n$ and $n-1$. But first we deal with the case where the transposition of $x$ is $\overset{\overset{\lambda}{+/-}}{(12)}$. Here $x$ commutes with an element $y$ of $X$ having cycles $\overset{\overset{\mu}{-}}{(1)}\overset{\overset{\mu'}{-}}{(2)}$ with $\mu$ and $\mu'$ odd. Using Proposition \ref{mnonzerol=0} on the remaining cycles of $y$, we see that $y$ is distance at most $n-2$ from the element $b$ of $X$ whose cycles are the same as $a$ except that we have $
		\overset{\overset{\mu}{-}}{(1)}
		\overset{\overset{\mu'}{-}}{(2)}
		\overset{\overset{0}{+}}{(n-1 \; n)}$ instead of $
		\overset{\overset{1}{-}}{(n-1)}
		\overset{\overset{1}{-}}{(n)}
		\overset{\overset{0}{+}}{(1 \; 2)}$. Clearly $d(b,a) = 2$. Hence $d(x,a) \leq n+1$. This also shows that any element with odd negative 1-cycles $\overset{\overset{\mu}{-}}{(1)}\overset{\overset{\mu'}{-}}{(2)}$ is distance at most $n$ from $a$. Assume from now on that the transposition of $x$ is not  $\overset{\overset{\lambda}{+/-}}{(12)}$, and that its odd negative 1-cycles are not $\overset{\overset{\mu}{-}}{(1)}\overset{\overset{\mu'}{-}}{(2)}$.\\
		
		 If the transposition of $x$ is $\overset{\overset{\lambda}{+/-}}{(n-1 \; n)}$ then $x$ is distance 2 from some $y \in X$ with cycles $\overset{\overset{1}{-}}{(n-1)}\overset{\overset{1}{-}}{(n)}$. Ignoring these cycles we use Proposition \ref{mnonzerol=0} in $G_{n-2}$ to see that $d(y,a) \leq n-2$ if the transposition of $y$ is $\overset{\overset{\lambda}{+/-}}{(12)}$ and $d(y,a) \leq n-3$ otherwise. Thus $d(x,a) \leq n$. 
		
	
	If $x$ has cycles $\overset{\overset{\mu}{-}}{(n-1)}\overset{\overset{\mu'}{-}}{(n)}$ where $\mu \equiv \mu' \mod{2}$, then $x$ is distance 3 from some $y$ in $X$ with cycles $\overset{\overset{1}{-}}{(n-1)}\overset{\overset{1}{-}}{(n)}$. Thus $d(x,a) \leq n+1$ if the transposition of $x$ is $\overset{\overset{\lambda}{+/-}}{(12)}$ and $d(x,a) \leq n$ otherwise. 
	
	If one of $n-1$ and $n$, say $n$, appears in the transposition of $x$ and the other appears in an even negative 1-cycle $\overset{\overset{\lambda}{-}}{(n-1)}$, then $x$ commutes with some $x' \in X$ with the cycles  $\overset{\overset{\lambda}{-}}{(n-1)}\overset{\overset{\lambda}{-}}{(n)}$. Then $x'$ is distance 2 from some $y \in X$ with the cycles $\overset{\overset{1}{-}}{(n-1)}\overset{\overset{1}{-}}{(n)}$, and such that the transposition of $y$ is not $\overset{\overset{\sigma}{+/-}}{(12)}$ for any $\sigma$. By Proposition \ref{mnonzerol=0} again, $d(x,a) \leq n$.
	
	If $x$ contains  $\overset{\overset{\lambda}{-}}{(n-1)}\overset{\overset{\mu}{-}}{(n)}\overset{\overset{2\sigma - \mu}{-}}{(\alpha)}$ where $\lambda$ is even, $\mu$ is odd, $\sigma$ is an integer and $\alpha < n-1$, then $x$ commutes with an element of $X$ containing $\overset{\overset{\lambda}{-}}{(n-1)}\overset{\overset{\sigma}{-}}{(n\; \alpha)}$, which commutes with an element of $X$ containing $\overset{\overset{\lambda}{-}}{(n-1)}\overset{\overset{\lambda}{-}}{(n)}\overset{\overset{2\sigma - \lambda}{-}}{(\alpha)}$, which commutes with an element of $X$ containing $\overset{\overset{0}{-}}{(n-1\; n)}\overset{\overset{2\sigma - \lambda}{-}}{(\alpha)}$, which finally commutes with an element $y$ of $X$ containing $\overset{\overset{1}{-}}{(n-1)}\overset{\overset{1}{-}}{(n)}\overset{\overset{2\sigma - \lambda}{-}}{(\alpha)}$, such that the transposition of $y$ is the same as the transposition of $x$, namely not $\overset{\overset{\lambda}{+/-}}{(1\; 2)}$. By Proposition~\ref{mnonzerol=0}, $d(y,a) \leq n-3$. Hence $d(x,a) \leq n+1$.
	
	The final case to consider is where $n$ is contained in the transposition of $x$ and $n-1$ is in an odd negative 1-cycle. So $x$ contains $\overset{\overset{\lambda}{-}}{(\alpha \; n)}\overset{\overset{\mu}{-}}{(\beta)}\overset{\overset{2\sigma - \mu}{-}}{(n-1)}$ for some $\alpha, \beta$ and integers $\lambda, \mu, \sigma$ with $\mu$ odd. If $\beta \notin \{1,2\}$ then subject to appropriate conjugation we can set $\beta = 3$. Using Proposition~\ref{mnonzerol=0} in $G_{\{1,\ldots, n\}\setminus\{3, n-1\}}$ we see that $x$ is distance at most $n-3$ from the element $y$ where $y = \overset{\overset{0}{+}}{(1  2)}\overset{\overset{2\sigma}{-}}{(n)}\overset{\overset{0}{-}}{(4)}\cdots \overset{\overset{0}{-}}{(n-2)}\overset{\overset{\mu}{-}}{(3)}\overset{\overset{2\sigma - \mu}{-}}{(n-1)}$. Then $y$ commutes with $\overset{\overset{\sigma}{-}}{(3\;  n-1)}\overset{\overset{2\sigma}{-}}{(n)}\overset{\overset{0}{-}}{(4)}\cdots \overset{\overset{0}{-}}{(n-2)}\overset{\overset{1}{-}}{(1)}\overset{\overset{1}{-}}{(2)}$, which commutes with $\overset{\overset{0}{+}}{(4  5)}\overset{\overset{2\sigma}{-}}{(n)}\overset{\overset{2\sigma}{-}}{(n-1)}\overset{\overset{0}{-}}{(3)}\overset{\overset{0}{-}}{(6)}\cdots \overset{\overset{0}{-}}{(n-2)}\overset{\overset{1}{-}}{(1)}\overset{\overset{1}{-}}{(2)}$ which is distance 2 from $a$. Thus $d(x,a) \leq n+1$. The last case is where without loss of generality $\beta = 1$ and we can assume $\alpha$ is 2 or 5. Let $\alpha'$ be the other element of $\{2,5\}$. Then $x$ is distance 2 from some $x'$ in $X$ containing $\overset{\overset{\sigma}{-}}{(1\; n-1)}\overset{\overset{\tau}{-}}{(\alpha)}\overset{\overset{\tau'}{-}}{(\alpha')}\overset{\overset{\nu}{-}}{(3)}\overset{\overset{2\kappa-\nu}{-}}{(4)}$ where $\tau'$ is determined by $x$ but we may choose $\tau$ arbitrarily, and $\nu, \kappa$ are integers with $\nu$ odd. Now let $y = \overset{\overset{0}{+}}{(n-1\;  n)}\overset{\overset{0}{-}}{(1)}\overset{\overset{0}{-}}{(2)}\overset{\overset{0}{-}}{(5)}\cdots \overset{\overset{0}{-}}{(n-2)}\overset{\overset{\nu}{-}}{(3)}\overset{\overset{2\kappa - \nu}{-}}{(4)}$. Then $d(x,x') = 2$ and $d(y,a) = 3$. What is $d(x',y)$? If $k_e > 3$ then $n\geq 8$. Set $\tau = 0$. Now working in $G_{\{1,\ldots, n\}\setminus \{\alpha, 3, 4\}}$ we see from Proposition~\ref{mnonzerol=0} that $d(x',y) \leq n-4$. If $k_e = 3$ then $n=7$. This time set $\tau = \tau'$. Then $x'$ commutes with $\overset{\overset{0}{+}}{(2 5)}\overset{\overset{0}{-}}{(1)}\overset{\overset{2\sigma}{-}}{(6)}\overset{\overset{2\sigma'}{-}}{(7)}\overset{\overset{\nu}{-}}{(3)}\overset{\overset{2\kappa-\nu}{-}}{(4)}$ for some $\sigma'$, which commutes with $\overset{\overset{\sigma+\sigma'}{-}}{(6 7)}\overset{\overset{0}{-}}{(1)}\overset{\overset{0}{-}}{(2)}\overset{\overset{0}{-}}{(5)}\overset{\overset{\nu}{-}}{(3)}\overset{\overset{2\kappa-\nu}{-}}{(4)}$ which commutes with $y$, so $d(x',y) = 3$. Hence in all cases $d(x,a) \leq n+1$, which completes the proof of Lemma \ref{kois2}.
\end{proof}

   \begin{thm}\label{connectm=1l=0} Suppose $n \geq 7$, $m \geq 1$, $l=0$ and $k_e$ and $k_o$ are both at least 2. Then \graph\ is connected with diameter at most $n+2$. If $m\geq 2$, then $\diam\graph\leq n$.    
 \end{thm}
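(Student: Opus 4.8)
The plan is to reduce the general case $m \geq 1$, $k_e, k_o \geq 2$ to the previously handled cases by using $n$ and $n-1$ as ``sacrificial'' coordinates, exactly as in the proof of Lemma~\ref{kois2}, but now with the extra flexibility afforded by $k_e \geq 2$ rather than $k_e \geq 3$. The key structural observation is that an arbitrary $x \in X$ can be moved in a bounded number of steps to an element whose coordinates $n-1$ and $n$ sit in two negative $1$-cycles of the \emph{same} parity; once this is achieved we strip those two cycles off and apply Proposition~\ref{mnonzerol=0} (the $l=0$, one-of-$k_e,k_o$-zero case) to the remaining element living in $G_{n-2}$, which has diameter at most $n-2$.

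First I would treat the main term $m \geq 2$, aiming for the sharper bound $\diam\graph \leq n$. The case analysis mirrors Lemma~\ref{kois2}: depending on whether $n-1$ and $n$ appear in transpositions, in even negative $1$-cycles, or in odd negative $1$-cycles of $x$, I would use Lemma~\ref{2cycle} and Lemma~\ref{doubletrans} to exhibit short commuting paths bringing $x$ to an element $y$ carrying the pair $\overset{\overset{1}{-}}{(n-1)}\overset{\overset{1}{-}}{(n)}$ (or an even pair, as convenient). The point of assuming $m \geq 2$ is that after removing these two coordinates we still have at least one transposition left, so the inductive bound from Proposition~\ref{mnonzerol=0} applies cleanly in $G_{n-2}$ and the path lengths add up to at most $n$. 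The parity constraints $k_e, k_o \geq 2$ guarantee that the target cycles we need to create are actually available in the class $X$, so every intermediate element really lies in $X$.

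For the case $m=1$, where only the weaker bound $n+2$ is claimed, I would again use the coordinate pair $\{n-1,n\}$ but now the stripped-down element lives in a class with $m=1$ in $G_{n-2}$, which is precisely the regime governed by Proposition~\ref{mnonzerol=0} (with its possible $n-2$ worst case) or by Lemma~\ref{kois2} when one of the residual parities drops to $2$. Assembling a path of the shape $x \to \cdots \to y \to \cdots \to z \to a$, where $y$ carries the cleaned-up pair on $\{n-1,n\}$, $z$ commutes with $a$, and the middle segment is bounded by the $G_{n-2}$ diameter, yields $d(x,a) \leq n+2$ after summing the (bounded) head and tail with the inductive middle. The hypothesis $n \geq 7$ is what keeps the base of the induction inside the range already verified (Lemma~\ref{kois2} needs $k_e \geq 3$, and the small exceptional configurations in Theorem~\ref{mainhalf}(iv),(v) are excluded).

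The main obstacle I anticipate is the bookkeeping in the $m=1$, $k_e = k_o = 2$ sub-configuration at the bottom of the induction, where the supply of correct negative $1$-cycles of each parity is minimal and one must be careful that creating the pair $\overset{\overset{1}{-}}{(n-1)}\overset{\overset{1}{-}}{(n)}$ does not force destruction of the only available even pair elsewhere; this is exactly the phenomenon that made the final cases of Lemma~\ref{kois2} delicate, and it is where the parity-matching commutation rules of Lemma~\ref{2cycle}(iii) and Lemma~\ref{doubletrans}(i),(iii),(vi) must be invoked in just the right order. Verifying that the extremal path lengths really reach but do not exceed $n+2$ (rather than collapsing to $n$) will require a lower-bound example analogous to the $\overset{\overset{2}{-}}{(1)}\cdots$ element used in Theorem~\ref{disc}, which I would exhibit to confirm the bound is attained.
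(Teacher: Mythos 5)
There is a genuine gap in your reduction. Your central move is to bring the coordinates $n-1$ and $n$ into two negative $1$-cycles of the same parity, strip them off, and apply Proposition~\ref{mnonzerol=0} in $G_{n-2}$. But Proposition~\ref{mnonzerol=0} requires one of $k_e$, $k_o$ to be zero, and after removing two same-parity cycles the residual class in $G_{n-2}$ has parameters $(k_e, k_o-2)$ (or $(k_e-2,k_o)$); these are both nonzero unless $\min(k_e,k_o)=2$ exactly. Worse, the residual class can be \emph{disconnected}: if $k_o=3$ the residue has $k_o=1$ (disconnected by Theorem~\ref{mainhalf}(ii)), and if $n=8$, $m=1$, $k_e=k_o=3$ the residue is the $n=6$, $m=1$, $k_e=k_o=2$ class excluded by Theorem~\ref{mainhalf}(v). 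So for general $k_e,k_o$ your two-coordinate strip lands in a subgroup where no path to the target exists, and the argument cannot be completed as stated. Your appeal to Lemma~\ref{kois2} ``when one of the residual parities drops to $2$'' does not cover these cases, since that lemma needs $k_o=2$ \emph{and} $k_e\geq 3$.

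The paper avoids this by only using the two-coordinate strip when $k_o=2$ (where the residue genuinely has $k_o=0$ and Proposition~\ref{mnonzerol=0} applies), and otherwise running an induction that removes \emph{one} negative $1$-cycle at a time (for $m=1$, induction on $k_o$ down to the base case of Lemma~\ref{kois2}), or one cycle of \emph{each} parity simultaneously (in the $m>1$, $k_o>2$ subcase), so that both $k_e\geq 2$ and $k_o\geq 2$ are preserved at every intermediate stage and the class never becomes disconnected. To repair your argument you would need to set it up as such an induction on $n$ (or on $k_o$) rather than as a one-shot reduction to $G_{n-2}$. Two smaller points: the $m=1$ bound $n+2$ in the paper arises from one extra step needed when the transposition of $x$ is already in the correct position $(1\,2)$, a case your sketch does not isolate; and the theorem only asserts an upper bound, so the lower-bound example you propose to construct, while interesting (the paper records one in the introduction), is not required for the proof.
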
           
        
       \begin{proof} 
Assume that $k_e \geq k_o$. Suppose first that $m=1$ and let $x \in X$. We use induction on $k_o$ to show that if the transposition of $x$ is not ${\overset{\overset{\lambda}{+/-}}{(1\; 2)}}$, then $d(x,a) \leq n+1$. Otherwise $d(x,a) \leq n+2$.  If $k_o = 2$ the result holds by Lemma~\ref{kois2}. If $k_o > 2$ and the transposition of $x$ is not ${\overset{\overset{\lambda}{+/-}}{(1\; 2)}}$, then $x$ contains cycles ${\overset{\overset{\nu}{+/-}}{(\alpha \beta)}}{\overset{\overset{\lambda}{-}}{(\gamma)}}{\overset{\overset{\lambda'}{-}}{(\delta)}}$ where $\alpha > 2$, $\gamma > 2$,  ${\overset{\overset{\sigma}{-}}{(\alpha)}}$ is a cycle of $a$ and $\lambda, \lambda'$ and $\sigma$ are all congruent modulo 2. Now $x$ commutes with some $x' \in X$ containing the cycles ${\overset{\overset{\sigma}{-}}{(\alpha)}}{\overset{\overset{\nu'}{-}}{(\gamma\; \delta)}}$ for appropriate $\nu'$. If we ignore ${\overset{\overset{\sigma}{-}}{(\alpha)}}$ and work in $G_{\{1, \ldots, n\}\setminus\{\alpha\}}$, then inductively $d(x',a) \leq n$. Hence $d(x,a) \leq n+1$. Suppose the transposition of $x$ is ${\overset{\overset{\lambda}{+/-}}{(1\; 2)}}$. Then
$x$ commutes with some $y$ in $X$ that does not contain this transposition, and we have seen that $d(y,a) \leq n+1$. Hence $d(x,a) \leq n+2$. This completes the case $m=1$.\\

If $m > 1$ and $k_o = 2$ then it is easy to see that $x$ is distance at most 2 from an element of $X$ containing the transposition $\overset{\overset{0}{+}}{(n-1\;\; n)}$. Thus $x$ is distance at most 3 from an element $y$ of $X$ containing $\overset{\overset{1}{-}}{(n-1)}
\overset{\overset{1}{-}}{(n)}$. Ignoring these 1-cycles we may work in $G_{n-2}$ and apply Proposition~\ref{mnonzerol=0} to see that $d(z,a) \leq n-3$. Hence $d(x,a) \leq n$. \\


Now suppose $m>1$ and $k_o > 2$. If $x$ has a transposition containing an element $\alpha$ with $\alpha > 2m$, then $x$ commutes with an element $y$ containing $\overset{\overset{0}{-}}{(\alpha)}$ or $\overset{\overset{1}{-}}{(\alpha)}$ (choose whichever of these is a cycle of $a$). Then we can ignore this cycle and work in $G_{\{1,\ldots, n\}\setminus \{\alpha\}}$. Inductively, using the base case $k_o = 2$, we see that $d(y,a) \leq n-1$. Hence $d(x,a) \leq n$. Finally we deal with the case that every transposition of $x$ is of the form ${\overset{\overset{\lambda}{+/-}}{(\alpha\; \beta)}}$ where $\alpha < \beta \leq 2m$. Because $k_e \geq k_o \geq 3$, it must be the case that $x$ contains cycles: ${\overset{\overset{\sigma_1}{+/-}}{(\alpha_1\; \alpha_2)}}{\overset{\overset{\sigma_2}{+/-}}{(\alpha_3\; \alpha_4)}}\overset{\overset{\lambda_1}{-}}{(\beta_1)}\overset{\overset{\lambda_2}{-}}{(\beta_2)}\overset{\overset{\mu_1}{-}}{(\gamma_1)}\overset{\overset{\mu_2}{-}}{(\gamma_2)}$ where $\lambda_1 \equiv \lambda_2 \mod 2$, $\mu_1 \equiv \mu_2 \mod 2$, $\{\alpha_1, \ldots, \alpha_4\} \subseteq \{1, \ldots, 2m\}$, $2m < \beta_1 < \beta_2 \leq 2m+k_e$ and $2m+k_e < \gamma_1 < \gamma_2 \leq n$. Then $x$ is distance 2 from an element $y$ with the cycles $\overset{\overset{0}{-}}{(\beta)}$ and $\overset{\overset{1}{-}}{(\gamma)}$. Now, working inductively in $G_{\{1, \ldots, n\} \setminus \{\beta, \gamma\}}$ we see that $d(y,a) \leq n-2$. Hence $d(x,a) \leq n$.  \end{proof}
 
 \begin{lemma}
 	\label{n=5} If $n=1$, $m=1$, $l=1$ and $k_e = 2$, then $\diam \graph = 5$.
 \end{lemma}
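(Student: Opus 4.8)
Since $m=1$, $l=1$, $k_e=2$ and $2m+k_e+k_o+l=n$, we must have $k_o=0$ and $n=5$ (so the hypothesis ``$n=1$'' should read $n=5$), and a class representative is
\[
a=\overset{\overset{0}{+}}{(1\;2)}\,\overset{\overset{0}{-}}{(3)}\,\overset{\overset{0}{-}}{(4)}\,\overset{\overset{0}{+}}{(5)}.
\]
Every $x\in X$ thus has a single transposition, two negative $1$-cycles carrying even labels, and one positive fixed point. For the lower bound I would pass to the finite projection of Lemma \ref{easy}: the image $\hat a=\overset{+}{(1\;2)}\overset{-}{(3)}\overset{-}{(4)}\overset{+}{(5)}$ is an involution in the Weyl group of type $B_5$ with one transposition, one positive and two negative $1$-cycles, so in the notation of Theorem \ref{finite} it has $m=1$ and $k=\max\{1,2\}=2$. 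By Theorem \ref{finite}(v), $\diam\hatgraph=5$, so some $\hat x\in\hat X$ satisfies $d(\hat a,\hat x)=5$. Lifting $\hat x$ to any $x\in X$ (say by giving its $1$-cycles label $0$) and invoking Lemma \ref{easy} gives $d(a,x)\ge d(\hat a,\hat x)=5$, whence $\diam\graph\ge5$.

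For the upper bound I would show $d(x,a)\le5$ for every $x\in X$ by a direct construction. Because conjugation by the centraliser of $a$ is a graph automorphism fixing $a$, I would first normalise $x$: conjugating by the permutations of the centraliser (for instance the swap of positions $3$ and $4$) and by the translations commuting with $a$ (those $(1,\bw)$ with $\bw=\bw^{\hat a}$), I can restrict to a short list of representatives for the location of $x$'s transposition and fixed point and for its labels. In each representative case I would then exhibit an explicit sequence $x=x_0,x_1,\dots,x_r=a$ of length $r\le5$ in which successive terms commute, using the three elementary lemmas as moves: Lemma \ref{2cycle}(iii) lets a transposition $\overset{\overset{\lambda}{\pm}}{(\alpha\;\beta)}$ commute with a compatible pair $\overset{\overset{\mu}{-}}{(\alpha)}\overset{\overset{\nu}{-}}{(\beta)}$ and so be relocated or re-split, Lemma \ref{doubletrans} shifts the support of the transposition, and Lemma \ref{1cycle} dictates when two negative $1$-cycles at the same point agree. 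The single positive fixed point is the only free coordinate at each stage and serves as the pivot that makes the graph connected.

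The main obstacle will be controlling the translation (label) parts within five steps. The finite diameter already forces the positional part of a path to have length up to $5$, and the affine labels are additional coordinates that a naive lift of a finite path need not respect: elements whose projections commute in $W$ may fail to commute in $G$. The delicate cases are precisely those in which $\hat x$ is close to $\hat a$---even equal to it---so that the entire distance is created by incorrect even labels on the two negative $1$-cycles or an incorrect label or sign on the transposition; here the finite projection gives no information and every step must be built from the affine lemmas alone. I would organise these cases by the positions of $x$'s transposition and fixed point relative to those of $a$, check in each that the labels and signs can be corrected simultaneously with the positions, and verify that no configuration needs a sixth step. Together with the lower bound this gives $\diam\graph=5$.
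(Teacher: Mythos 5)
Your identification of the typo ($n=5$, hence $k_o=0$) and your lower-bound argument are both correct: projecting to $W$ via Lemma \ref{easy} and invoking Theorem \ref{finite}(v) does give $\diam\graph\geq 5$, and this is a worthwhile supplement, since the paper leaves the lower bound implicit. The problem is the upper bound, which is the substantive half of the lemma: your proposal never actually produces the paths. You correctly describe the strategy the paper follows --- normalise $x$ by the centraliser of $a$, split into cases according to the positions of the transposition and the fixed point, and build commuting chains using Lemmas \ref{1cycle}, \ref{2cycle} and \ref{doubletrans} --- and you correctly flag that the delicate cases are those where the whole distance comes from wrong labels. But ``check in each case that no configuration needs a sixth step'' is precisely the content that has to be supplied, and nothing in the proposal guarantees that five steps suffice.

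For comparison, the paper's proof disposes of the cases as follows: if the transposition of $x$ is $\overset{\overset{\lambda}{+/-}}{(1\,2)}$, the problem reduces to the rank-three graph of Table \ref{tableG3} and $d(x,a)\leq 3$; if the transposition lies inside $\{3,4,5\}$, a two-step move (first make the two even negative $1$-cycles sit on $\{1,2\}$, then merge them into $\overset{\overset{(\lambda+\lambda')/2}{-}}{(1\,2)}$, which commutes with $a$ by Lemma \ref{2cycle}(iii)) again gives $d(x,a)\leq 3$; only when the transposition straddles $\{1,2\}$ and $\{3,4,5\}$ is a genuine length-$5$ chain needed, and the paper writes that chain out term by term. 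Without exhibiting those chains, or giving a uniform argument that the label corrections can always be absorbed into the positional moves, the claim $d(x,a)\leq 5$ remains unproven; as it stands the proposal is a plan for a proof rather than a proof, and the gap is exactly at the step where the lemma's content lies.
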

 
 \begin{proof}
 	Let $x \in X$. If the transposition is  $\overset{\overset{\lambda}{+/-}}{(1 2)}$ then using Table \ref{tableG3} for $G_{\{3,4,5\}}$ we see that $d(x,a) \leq 3$. Suppose the transposition of $x$ is  ${\overset{\overset{\lambda}{+/-}}{(\alpha \; \beta)}}$ where $\{\alpha, \beta\} \subseteq \{3,4,5\}$, and let $\gamma$ be the remaining element of $\{3,4,5\}$. Then $x$ commutes with $x' =  {\overset{\overset{0}{+/-}}{(\alpha \; \beta)}} {\overset{\overset{\lambda}{-}}{(1)}} {\overset{\overset{\lambda'}{-}}{(2)}} {\overset{\overset{0}{+}}{(\gamma)}}$ for some even integers $\lambda$ and $\lambda'$. Now $x'$ commutes with   ${\overset{\overset{(\lambda+\lambda')/2}{-}}{(1 2)}} {\overset{\overset{0}{-}}{(\alpha)}} {\overset{\overset{0}{-}}{(\beta)}} {\overset{\overset{0}{+}}{(\gamma)}}$, which commutes with $a$. So $d(x,a) \leq 3$. The remaining cases are (interchanging 1 and 2 if necessary) when $x =  {\overset{\overset{\sigma}{+/-}}{(1\; \alpha)}} {\overset{\overset{\lambda}{-}}{(2)}} {\overset{\overset{\lambda'}{-}}{(\beta)}} {\overset{\overset{0}{+}}{(\gamma)}}$   or ${\overset{\overset{\sigma}{+/-}}{(1\; \alpha)}} {\overset{\overset{+}{+}}{(2)}} {\overset{\overset{\lambda'}{-}}{(\beta)}} {\overset{\overset{\lambda}{-}}{(\gamma)}}$ for appropriate $\sigma, \lambda$ and $\lambda'$. The following is a path of length at most 5 from either of these to $a$:$\quad$ $x$ , ${\overset{\overset{0}{-/+}}{(1\; \alpha)}} {\overset{\overset{0}{+}}{(2)}} {\overset{\overset{\lambda'}{-}}{(\beta)}} {\overset{\overset{\lambda}{-}}{(\gamma)}}$,  
 		${\overset{\overset{(\lambda'+\lambda)/2}{-}}{(\beta\; \gamma)}} {\overset{\overset{0}{-}}{(1)}} {\overset{\overset{0}{+}}{(2)}} {\overset{\overset{0}{-}}{(\alpha)}}$,  
    		${\overset{\overset{0}{+}}{(\beta\; \gamma)}} {\overset{\overset{0}{-}}{(1)}} {\overset{\overset{0}{-}}{(2)}} {\overset{\overset{0}{+}}{(\alpha)}}$, 	${\overset{\overset{0}{+}}{(1\; 2)}} {\overset{\overset{0}{-}}{(\beta)}} {\overset{\overset{0}{-}}{(\gamma)}} {\overset{\overset{0}{+}}{(\alpha)}}$, $a$. Hence in all cases $d(x,a) \leq 5$, which completes the proof.
 \end{proof}
 We observe, because we will need it for Lemma \ref{n=6} later, that the proof of Lemma \ref{n=5} shows that $d(x,a) \leq 4$ in all cases except where (modulo interchanging 1 and 2, or 3 and 4) the transposition of $x$ is ${\overset{\overset{0}{+}}{(1\; 3)}}$.
 
 \begin{thm}
 	\label{m>0l>0} Suppose $m \geq 1$, $l \geq 1$ and $\max\{k_e, k_o, l\} \geq 2$. Then $\graph$ is connected with diameter at most $n$. 
 \end{thm}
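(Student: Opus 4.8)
The plan is to argue by induction on $n$, exploiting the positive $1$-cycle guaranteed by $l \geq 1$ as a ``buffer'' that allows cycles to be traded cheaply. The base cases of small rank ($n \leq 6$) are disposed of by Lemmas~\ref{n=5} and~\ref{n=6} together with the explicit tables, so I would assume $n \geq 7$ and that the inductive bound holds for every admissible labelled cycle type of rank less than $n$.

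For the inductive step, given $x \in X$ I would seek an element $y \in X$ commuting with $x$ (so $d(x,y) \leq 1$) that agrees with $a$ on a single cycle, which I can then ``freeze'' and delete, passing to the standard parabolic subgroup of rank $n-1$ carrying the deleted position away. Producing $y$ is a case analysis on the location in $x$ of the element carried by the cycle of $a$ we intend to match: if that element already sits in the correct cycle of $x$ we are done; if it sits in a negative $1$-cycle or a transposition, Lemma~\ref{2cycle}(ii),(iii) lets us pair it against the buffering fixed point to manufacture the desired cycle, while the displaced material is reabsorbed into a transposition or a pair of negative $1$-cycles elsewhere so that $y$ retains the labelled cycle type $(m,k_e,k_o,l)$; when $m \geq 2$ the extra transposition gives more room and Lemma~\ref{doubletrans} settles the configurations in which the relevant elements all lie among transpositions. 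Since $d(x,a)\leq 1 + d(y,a)$ and the reduced instance of rank $n-1$ contributes at most $n-1$ — either by induction, when its type still meets the present hypotheses, or by the earlier $l=0$ results (Propositions~\ref{mnonzerol=0}, \ref{m>0k=0} and the $m\geq 2$ part of Theorem~\ref{connectm=1l=0}) — this yields $d(x,a)\leq n$.

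The delicate point, and the step I expect to be the main obstacle, is choosing \emph{which} cycle of $a$ to match so that the reduced instance still enjoys a bound of $n-1$ rather than a weaker one. Deleting the last fixed point (so $l$ drops from $1$ to $0$) can land us in the case $m=1$, $l=0$, $k_e,k_o \geq 2$, where the $m=1$ part of Theorem~\ref{connectm=1l=0} only guarantees diameter $n+1$, which would overshoot. In that situation I would instead delete a \emph{negative} $1$-cycle, keeping $l=1$ and lowering $k_e$ or $k_o$ while preserving $\max\{k_e,k_o,l\} \geq 2$ — possible precisely because one of $k_e,k_o$ exceeds $1$ — so that the recursion stays inside the present theorem with the tight bound. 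Alongside this routing issue, the bookkeeping must respect the parity constraints of Lemma~\ref{2cycle}(iii), under which a split transposition yields two negative $1$-cycles of equal parity, and must check at each reduction that the hypotheses $m \geq 1$, $l \geq 1$, $\max\{k_e,k_o,l\}\geq 2$ (or those of the invoked earlier result) persist, the exceptional small types being exactly the disconnected cases isolated in Theorem~\ref{mainhalf}.
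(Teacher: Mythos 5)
Your overall strategy (minimal counterexample / induction on rank, freezing a cycle that matches $a$ and deleting it) is the same as the paper's, and your observation about routing the recursion through a negative $1$-cycle rather than dropping to $l=0$ is a real issue that the paper also has to navigate. But there is a genuine gap at the heart of the inductive step: the claim that you can always find $y$ commuting with $x$ that \emph{exactly} agrees with $a$ on some cycle is false. If $x$ contains a negative $1$-cycle $\overset{\overset{\lambda}{-}}{(\alpha)}$ with $\lambda$ even and nonzero at a position where $a$ has $\overset{\overset{0}{-}}{(\alpha)}$, then by Lemma~\ref{1cycle} no element commuting with $x$ can carry $\overset{\overset{0}{-}}{(\alpha)}$, and by Lemma~\ref{2cycle}(ii) you cannot pair $\alpha$ against the fixed point either (a transposition $(\alpha\,\delta)$ in $y$ fails to commute with $\overset{\overset{\lambda}{-}}{(\alpha)}\overset{\overset{0}{+}}{(\delta)}$ in $x$). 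Correcting a wrong label costs at least two steps (route through a fixed point or a transposition), and an element all of whose negative $1$-cycles carry wrong labels admits no one-step matching at all. These are exactly the configurations that force the paper into explicit multi-step paths (the $n=7$, $k_e=k_o=2$ path, the $m=2$, $n=8$ contradiction, and the $m=1$, $k_o=1$ case, which needs a six-element subset argument plus the separate Lemma~\ref{n=6}); your sketch does not account for them, and a naive ``$1+(n-1)$'' budget breaks precisely there.

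A second, related accounting problem: several of the reduced instances you would recurse into do not have diameter bounded by their rank. For example, deleting the unique transposition when $m=1$ lands in the $m=0$, $l=1$ case, where Theorem~\ref{disc} gives diameter $n'+1$ in rank $n'$ — one over budget. The paper absorbs this overshoot by arranging that the endpoint $b$ of the recursive path is adjacent to $a$ (so the last edge is not paid for twice) and, in the $m=1$ subcase, by noting that $x=x'$ so the initial edge is free. Your proposal, as written, pays both the initial edge to $y$ and the full recursive diameter, which only closes when the reduced instance genuinely has diameter at most $n-1$; you would need to verify this instance by instance, and in the cases above it fails without the extra savings.
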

 
 \begin{proof} Suppose $n$ is minimal such that $\graph$ is a counterexample, and let $x \in X$ such that $d(x,a) > n$. By Lemma \ref{n=5} we can assume $n \geq 6$. If $l \geq 2$ then $x \in X$ commutes with some $y \in X$ containing $\overset{\overset{0}{+}}{(n)}$. Ignoring this 1-cycle we can work in $G_{n-1}$ to find a path to $a$, which inductively is of length at most $n-1$, which implies $d(x,a) \leq n$, contrary to our choice of $x$. Hence $l = 1$. \\
 
If elements $\alpha$ and $\beta$ lying between $2m + 1$ and $2m+k_e$ are contained in transpositions of $x$, then $x$ commutes with some $x' \in X$ having the transposition ${\overset{\overset{\lambda}{+/-}}{(\alpha \; \beta)}}$ for some $\lambda$ (if $m=1$ then we can set $x=x'$). If we ignore this transposition of $x'$ we can work in $G_{\{1, \ldots, n\}\setminus \{\alpha, \beta\}}$, which is either the case $l=1$ with a smaller $m$, so inductively the graph has diameter at most $n-2$, or (if $m=1$) we can use Theorem \ref{disc}, in which case the graph has diameter $n-1$.
In either case, we see that $x$ is distance at most $n-1$ from the element $b$ of $X$ whose cycles are the same as $a$ except that $b$ has $\overset{\overset{0}{-}}{(1)}\overset{\overset{0}{-}}{(2)}\overset{\overset{0}{+}}{(\alpha \; \beta)}$ instead of $\overset{\overset{0}{-}}{(12)}\overset{\overset{0}{-}}{(\alpha)}\overset{\overset{0}{-}}{(\beta)}$. Since $b$ commutes with $a$ we have $d(x,a) \leq n$. The same argument holds if $2m+k_e < \alpha < \beta < n$. If $1 \leq \alpha < \beta \leq 2m$ then similar reasoning shows again $x$ is distance at most $n-1$ from an element $b$ containing $\overset{\overset{0}{+}}{(\alpha \; \beta)}$ that commutes with $a$. So $d(x,a) \leq n$. Thus none of these pairs $\alpha, \beta$ exist in $x$. This implies $m \leq 2$. Moreover if $m=2$ then $k_0 \neq 0$ and without loss of generality the transpositions of $x$ contain $1, 5, n-1$ and $n$. \\

If there is some $\beta$ in a transposition of $x$ with $2m+k_e < \beta < n$ and if $k_o \geq 2$, then $x$ commutes with some $y \in X$ containing $\overset{\overset{1}{-}}{(\beta)}$. Inductively we can work in $G_{\{1,\ldots, n\} \setminus\{\beta\}}$ to see that $d(y,a) \leq n-1$. Hence $d(x,a) \leq n$. Similarly, as long as either $k_e > 2$ or $k_o \geq 2$ (or both), if there is some $\alpha$ in a transposition of $x$ with $2m < \alpha \leq 2m+k_e$ then inductively $d(x,a) \leq n$. This means that if $m=2$ then $k_e = 2$ and $k_o = 1$. \\

Suppose that $m=2$, $k_e = 2$ and $k_o = 1$, so that $n=8$. We have observed that the transpositions of $x$ must contain $1, 5, 7$ and $8$. If 6 is not contained in an even negative 1-cycle of $x$, then $x$ commutes with some $y$ containing a transposition  ${\overset{\overset{\lambda}{+/-}}{(\alpha \; \beta)}}$ for some $\lambda$, where $\{\alpha \beta\} \subset \{2,3,4\}$. As at the start of this proof, $d(y,a) \leq 7$. More explicitly, inductively $y$ is distance at most 6 from an element $b$ containing ${\overset{\overset{0}{+}}{(\alpha \; \beta)}}$ that commutes with $a$. Hence $d(x,a) \leq 8$, a contradiction. Thus $m=1$. \\

If $x$ contains $\overset{\overset{0}{+}}{(\alpha)}$ where $2+k_e < \alpha < n$ then $x$ commutes with some $x' \in X$ containing $\overset{\overset{1}{-}}{(\alpha)}$. So, using the result for $G_{\{1,\ldots, n\} \setminus \{\alpha\}}$, we get $d(x',a) \leq n-1$. Thus $d(x,a) \leq n$. The same reasoning holds if $k_e > 2$ and $x$ contains $\overset{\overset{0}{+}}{(\alpha)}$ for some $\alpha$ where $2m < \alpha \leq 2m+k_e$. Suppose first that $k_o \neq 1$. Since $n \geq 6$ we must have $k_o \geq 2$ or $k_o = 0$ and $k_e \geq 3$. Thus the transposition of $x$ cannot contain any $\alpha$ with $2m < \alpha < n$. Hence without loss of generality $x$ contains $\overset{\overset{\lambda}{*}}{(1\;n)}\overset{\overset{0}{+}}{(2)}$. 
 	 Suppose that $x$ contains $\overset{\overset{\sigma}{-}}{(\beta)}
 	  \overset{\overset{\sigma'}{-}}{(\beta')}$   where
 	   $\sigma \equiv \sigma'$ mod ${2}$ and either $3\leq \beta < \beta' \leq k_e + 2$ or $k_e+3\leq \beta < \beta' < n$. Then, $x$ commutes with some $y \in X$ where $y$ contains $\overset{\overset{\lambda}{-}}{(\beta\;\beta')} \overset{\overset{\sigma}{-}}{(1)}$
 	   for some $\lambda$. Now $a$ commutes with some element $z$ containing $\overset{\overset{\lambda}{-}}{(\beta\;\beta')} \overset{\overset{\sigma}{-}}{(1)}\overset{\overset{\sigma}{-}}{(2)}$. Using Theorem~\ref{disc} on $G_{\{1, \ldots, n\}\setminus\{1, \beta, \beta'\}}$ (that is, removing $\overset{\overset{\lambda}{-}}{(\beta\;\beta')} \overset{\overset{\alpha}{-}}{(1)}$, from $y$ and $z$) we see that $d(y,z) \leq n-2$.
 	  Therefore, $d(x,a) \leq n$. Finally, we are reduced to the possibility that $x$ contains $\overset{\overset{\lambda}{*}}{(1\;n)} \overset{\overset{0}{+}}{(2)}$ and any pair $\overset{\overset{\sigma}{-}}{(\beta)} \overset{\overset{\sigma'}{-}}{(\beta')}$ where $\beta < \beta'$ and $\sigma \equiv \sigma' \mod{2}$ satisfies $3\leq \beta \leq k_e + 2 < \beta' < n$. Since $n>5$ and $k_o \neq 1$, the only way this can occur is when $k_e = k_o = 2$ and $n=7$. Conjugating by an element of the centraliser of $a$ if necessary, we can assume that $x= \overset{\overset{\lambda}{*}}{(1\;7)} \overset{\overset{\sigma}{-}}{(3)}\overset{\overset{\sigma'}{-}}{(5)} \overset{\overset{\mu}{-}}{(4)}\overset{\overset{\mu'}{-}}{(6)} \overset{\overset{0}{+}}{(2)}$ where $\sigma$ and $\sigma'$ are even, and $\mu$, $\mu'$ are odd. The following is a path from $x$ to $a$ in the graph: $x$, $\overset{\overset{\sigma - \sigma'}{+}}{(3\;5)} \overset{\overset{0}{-}}{(1)}\overset{\overset{\lambda'}{-}}{(7)} \overset{\overset{\mu}{-}}{(4)}\overset{\overset{\mu'}{-}}{(6)} \overset{\overset{0}{+}}{(2)}$, 
 	  $\overset{\overset{0}{-}}{(3\;5)} \overset{\overset{0}{-}}{(1)}\overset{\overset{0}{-}}{(2)} \overset{\overset{\mu}{-}}{(4)}\overset{\overset{\mu'}{-}}{(6)} \overset{\overset{0}{+}}{(7)}$, 
 	$\overset{\overset{0}{+}}{(1\;2)} \overset{\overset{0}{-}}{(3)}\overset{\overset{0}{-}}{(5)} \overset{\overset{\mu'}{-}}{(6)}\overset{\overset{1}{-}}{(7)} \overset{\overset{0}{+}}{(4)}$, 
 	$\overset{\overset{0}{+}}{(1\;2)} \overset{\overset{0}{-}}{(3)}\overset{\overset{0}{-}}{(4)} \overset{\overset{\mu'}{-}}{(6)}\overset{\overset{1}{-}}{(7)} \overset{\overset{0}{+}}{(5)}$, 
 	$\overset{\overset{0}{+}}{(1\;2)} \overset{\overset{0}{-}}{(3)}\overset{\overset{0}{-}}{(4)} \overset{\overset{\mu'}{-}}{(5)}\overset{\overset{1}{-}}{(7)} \overset{\overset{0}{+}}{(6)}$, $a$. So $d(x,a) \leq 6$, another contradiction.\\

 The final case to consider is where $m=1$ and $k_o = 1$. Assume first that $n\geq 7$. Suppose that $x$ contains $\overset{\overset{\lambda}{-}}{(1)}
  	  \overset{\overset{\lambda'}{-}}{(2)}$ with $\lambda, \lambda'$ both even. Then $x$ commutes with some $y \in X$ containing $\overset{\overset{\nu}{-}}{(1\;2)}$ for some $\nu$. As observed at the start of this proof, $d(y,a) \leq n-1$. Hence $d(x,a) \leq n$. Now suppose $x$ does not contain $\overset{\overset{\lambda}{-}}{(1)}
  	  \overset
  	  {\overset{\lambda'}{-}}{(2)}$. Then there is a set  $A = \{\alpha_1, \ldots, \alpha_6\}$ containing 1, 2 and $n-1$ such that $x=\overset{\overset{\sigma}{+/-}}
  	  {(\alpha_1\;\alpha_2)}\overset{\overset{\lambda}{-}}{(\alpha_3)}\overset{\overset{\lambda'}{-}}{(\alpha_4)}\overset{\overset{\mu}{-}}{(\alpha_5)}\overset{\overset{0}{+}}{(\alpha_6)}\bar x$  where $\lambda$ and $\lambda'$ are both even and $\mu$ is odd. By replacing $x$ with a conjugate under the centraliser of $a$, we can further assume that $A = \{1,2,3,4,n-1,\beta\}$ for some $\beta$. Let $z = 
  	  \overset{\overset{0}{+}}{(1\;2)}
  	  \overset{\overset{0}{-}}{(3)}
  	  \overset{\overset{0}{-}}{(4)}
  	  \overset{\overset{1}{-}}{(n-1)}
  	  \overset{\overset{0}{+}}{(\beta)}\bar x$ . Now, using the case $k_o=1, n=6$ on $G_{\{1, 2, 3, 4, n-1, \beta\}}$ we see that  $d(x,z) \leq 6$. Next we apply Theorem \ref{disc} to $\overset{\overset{0}{+}}{(\beta)}x'$ and $\overset{\overset
  	  { 0}{-}}{(5)}\overset{\overset{0}{-}}
  	  {(6)}\dots \overset{\overset{0}{-}}{(n-2)}
  	    \overset {\overset{0}{+}}{(n)} \in G_{\{5, \dots, n-2, n\}}$ (noting that here there are no odd negative 1-cycles) to see that $d(z,a) \leq n-5$. Hence $d(x,a) \leq 6 + n-5 = n-1$, another contradiction.\\
  	    
  	   The remaining possibility is that $n=6$, $m=1$, $k_e = 2, k_o = 1$. But Lemma \ref{n=6} immediately after this proof shows that this graph has diameter 6, which is the final contradiction completing the proof of Theorem \ref{m>0l>0}.
 	\end{proof}
 	
 \begin{lemma}\label{n=6}
 	If $n=6$, $m=1$, $k_e = 2$ and $k_o = 1$ then $\diam \graph \leq 6$.
 \end{lemma}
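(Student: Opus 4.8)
The plan is to fix an arbitrary $x \in X$ and exhibit a path of length at most $6$ from $x$ to the representative
$$a = \overset{\overset{0}{+}}{(1\;2)}\overset{\overset{0}{-}}{(3)}\overset{\overset{0}{-}}{(4)}\overset{\overset{1}{-}}{(5)}\overset{\overset{0}{+}}{(6)}$$
in \graph. Throughout I will test commuting using Lemmas \ref{1cycle}, \ref{2cycle} and \ref{doubletrans}, and I will freely replace $x$ by a conjugate under the centraliser of $a$, since this preserves $d(x,a)$; in particular this lets me interchange the two even negative $1$-cycles (positions $3$ and $4$) of $a$.

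The key idea is to reduce to the configuration $m=1$, $l=1$, $k_e=2$, $k_o=0$ already treated in Lemma \ref{n=5}, since that is exactly what remains after deleting one odd negative $1$-cycle from the labelled cycle type considered here. Concretely, I will first move $x$, in as few steps as possible, to an element $y \in X$ that contains the cycle $\overset{\overset{1}{-}}{(5)}$ of $a$; deleting the point $5$ and working in $G_{\{1,2,3,4,6\}}$, the elements $y$ and $a$ then restrict to the $n=5$ situation of Lemma \ref{n=5}, and any path may keep $\overset{\overset{1}{-}}{(5)}$ fixed throughout, so $d(y,a)$ is controlled by that lemma. Crucially I will use not merely the bound of Lemma \ref{n=5} but its refinement recorded in the remark immediately after it: in $G_5$ the distance to the representative is at most $4$ except when, up to the obvious symmetries, the transposition is $\overset{\overset{0}{+}}{(1\;3)}$, in which case the distance is at most $5$.

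The case analysis is organised by where the point $5$ sits in $x$ and by the parities of the relevant labels. If $5$ already lies in an odd negative $1$-cycle $\overset{\overset{\mu}{-}}{(5)}$ of $x$, then a short sequence correcting its label to $1$ (passing through a suitable intermediate element, as permitted by Lemma \ref{1cycle}) produces $\overset{\overset{1}{-}}{(5)}$. If instead $5$ lies in an even negative $1$-cycle, in the fixed point, or in the transposition of $x$, then Lemma \ref{2cycle} and Lemma \ref{doubletrans} let me trade cycles so as to install $\overset{\overset{1}{-}}{(5)}$, each such move costing one or two edges and being legal precisely when the parity and translation conditions of those lemmas (for instance $\mu+\nu=2\lambda$ in Lemma \ref{2cycle}(iii)) are satisfiable. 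After the reduction I append the $G_5$-path supplied by Lemma \ref{n=5}.

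The main obstacle is the bookkeeping of the total length. The reduction to $\overset{\overset{1}{-}}{(5)}$ costs up to $2$ edges, and the generic residual $G_5$-distance is at most $4$, giving $6$ overall; but in the exceptional $\overset{\overset{0}{+}}{(1\;3)}$ residual the $G_5$-distance can be $5$, so a budget of $6$ forces the reduction to have cost at most $1$ in exactly those configurations. The crux of the argument is therefore to show that the expensive reduction and the expensive residual never co-occur — equivalently, that whenever $x$ is driven into the $(1\;3)$-exceptional residual the point $5$ was already favourably placed — and otherwise to fall back on an explicit length-$6$ path, verified edge-by-edge through Lemmas \ref{2cycle} and \ref{doubletrans} in the spirit of the paths written out in Lemmas \ref{n=5} and \ref{kois2}, in the handful of worst-case configurations where the two interact. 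I expect this reconciliation, rather than any single commuting computation, to be the hard part.
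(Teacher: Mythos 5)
Your strategy is genuinely different from the paper's: the paper pivots on where the points $1$ and $2$ sit (using Theorem \ref{disc} applied to $G_{\{3,4,5,6\}}$, the data in Table \ref{tableG4}, and several explicit paths), and it invokes Lemma \ref{n=5} only in the single sub-case where the fixed point of $x$ is already at $5$ --- which is precisely the situation where $\overset{\overset{1}{-}}{(5)}$ can be installed in one edge. Your plan of always installing $\overset{\overset{1}{-}}{(5)}$ first and then quoting Lemma \ref{n=5} has a genuine gap in its cost accounting. Because $k_o=l=1$, Lemma \ref{2cycle}(ii),(iii) force the transposition to exchange only with pairs of negative $1$-cycles whose labels have \emph{equal} parity, hence only with the two even ones; and Lemma \ref{1cycle} forbids changing the label of a negative $1$-cycle across a single edge. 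Consequently the parity of the $1$-cycle at position $5$ can change only by routing $5$ through the unique fixed-point slot. In particular, if $5$ lies in the transposition of $x$, every neighbour of $x$ has $5$ either still in its transposition or in an \emph{even} negative $1$-cycle, so reaching an element containing $\overset{\overset{1}{-}}{(5)}$ costs at least three edges (transposition $\to$ even $1$-cycle $\to$ fixed point $\to$ odd $1$-cycle), not the ``one or two'' you budget; three plus the generic residual bound of four already gives $7$.

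Even where the reduction does cost two edges, the claim that the expensive reduction and the expensive residual never co-occur is exactly where the proof lives, and it fails for the naive reduction: starting from $x = \overset{\overset{0}{+}}{(1\,3)}\,\overset{\overset{\lambda}{-}}{(2)}\,\overset{\overset{\lambda'}{-}}{(5)}\,\overset{\overset{\mu}{-}}{(4)}\,\overset{\overset{0}{+}}{(6)}$ (with $\lambda,\lambda'$ even, $\mu$ odd), the two-step installation of $\overset{\overset{1}{-}}{(5)}$ that keeps the transposition at $\{1,3\}$ lands exactly in the exceptional residual of the remark following Lemma \ref{n=5}, giving $2+5=7$. One can sometimes dodge this by moving the transposition during the reduction, but deciding when that is possible, and producing bespoke paths when it is not, is the entire content of the lemma and is precisely what your proposal defers to ``reconciliation''. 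As written, the argument establishes the bound only in the favourable configurations --- essentially those the paper also dispatches via Lemma \ref{n=5} --- and leaves the hard cases open.
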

 
     \begin{proof}
     	We have $a = \overset{\overset{0}{+}}{(1\;2)}
     	  	  \overset{\overset{0}{-}}{(3)}
     	  	  \overset{\overset{0}{-}}{(4)}
     	  	  \overset{\overset{1}{-}}{(5)}
     	  	  \overset{\overset{0}{+}}{(6)}$. Let $x \in X$. Suppose $x$ contains $\overset{\overset{\sigma}{+/-}}{(1\;2)}$ for some $\sigma$. By Theorem \ref{disc}, the graph of $\overset{\overset{0}{-}}{(3)}\overset{\overset{0}{-}}{(4)}\overset{\overset{1}{-}}{(5)}\overset{\overset{0}{+}}{(6)}$ in $G_{\{3,4,5,6\}}$ has diameter 5. Hence $d(x,a) \leq 5$. If $x$ contains $\overset{\overset{\lambda}{-}}{(1)}\overset{\overset{\lambda'}{-}}{(2)}$ where $\lambda, \lambda'$ are even, then $x$ commutes with some $x'$ containing $\overset{\overset{\sigma}{+/-}}{(1\;2)}$ for some $\sigma$, and we have just seen that $d(x',a) \leq 5$. Hence $d(x,a) \leq 6$. \\
We next consider the cases where 1 and 2 are in different types of 1-cycle of $x$. Because interchanging 1 and 2 does not affect the distance of $x$ from $a$, there are just three cases to consider here: $x$ contains $\overset{\overset{\lambda}{-}}{(1)}\overset{\overset{0}{+}}{(2)}$, $\overset{\overset{\lambda}{-}}{(1)}\overset{\overset{\mu}{-}}{(2)}$ or $\overset{\overset{\mu}{-}}{(1)}\overset{\overset{0}{+}}{(2)}$, where $\lambda$ is even and $\mu$ is odd. In the first case we have $x = \overset{\overset{\sigma}{+/-}}{(\alpha \; \beta)} \overset{\overset{\lambda}{-}}{(1)}
\overset{\overset{\lambda'}{-}}{(\gamma)}
 \overset{\overset{\mu}{-}}{(\delta)}
 \overset{\overset{0}{+}}{(2)}$ where $\{\alpha, \beta, \gamma, \delta\} = \{3,4,5,6\}$, $\lambda, \lambda'$ are even and $\mu$ is odd. Here $x$ commutes with $\overset{\overset{0}{-/+}}{(\alpha \; \beta)} \overset{\overset{\lambda}{-}}{(1)}  \overset{\overset{\lambda}{-}}{(2)}
  \overset{\overset{\mu}{-}}{(\delta)} \overset{\overset{0}{+}}{(\gamma)}$, which commutes with the element $b$ given by $b = \overset{\overset{0}{+}}{(1\; 2)} \overset{\overset{0}{-}}{(\alpha)} \overset{\overset{0}{-}}{(\beta)}\overset{\overset{1}{-}}{(\gamma)}
  \overset{\overset{0}{+}}{(\delta)}$. Glancing at Table \ref{tableG4} we observe that in $G_{\{3,4,5,6\}}$ elements at distance 4 or 5 from $\overset{\overset{0}{-}}{(3)}\overset{\overset{0}{-}}{(4)}\overset{\overset{1}{-}}{(5)}\overset{\overset{0}{+}}{(6)}$ require at least one even label to be nonzero. So $d(b,a) \leq 3$. hence $d(x,a) \leq 5$. For the second case, where $x$ contains $\overset{\overset{\lambda}{-}}{(1)}\overset{\overset{\mu}{-}}{(2)}$, note that $x$ commutes with some $x'$ containing $\overset{\overset{\lambda}{-}}{(1)}\overset{\overset{0}{+}}{(2)}$. Hence, using the first case we get $d(x,a) \leq 6$. The third case is where $x$ contains $\overset{\overset{\mu}{-}}{(1)}\overset{\overset{0}{+}}{(2)}$. Here, $x$ commutes with some $x'$ of the form $\overset{\overset{\sigma}{+/-}}{(\alpha \; \beta)} \overset{\overset{\lambda}{-}}{(\gamma)}
  \overset{\overset{\lambda'}{-}}{(5)}
   \overset{\overset{\mu}{-}}{(1)}
   \overset{\overset{0}{+}}{(2)}$. The following is a path from $x'$ to $a$:
   $$x' = \overset{\overset{\sigma}{+/-}}{(\alpha \; \beta)} \overset{\overset{\lambda}{-}}{(\gamma)}
     \overset{\overset{\lambda'}{-}}{(5)}
      \overset{\overset{\mu}{-}}{(1)}
      \overset{\overset{0}{+}}{(2)}, \overset{\overset{0}{-/+}}{(\alpha \; \beta)} \overset{\overset{\lambda}{-}}{(\gamma)}
     \overset{\overset{0}{-}}{(2)}
      \overset{\overset{\mu}{-}}{(1)}
      \overset{\overset{0}{+}}{(5)}, \overset{\overset{0}{+}}{(\alpha \; \beta)} \overset{\overset{\lambda}{-}}{(\gamma)}
           \overset{\overset{0}{-}}{(2)}
            \overset{\overset{1}{-}}{(5)}
            \overset{\overset{0}{+}}{(1)}, \overset{\overset{0}{+}}{(\alpha \; \beta)} \overset{\overset{0}{-}}{(1)}
\overset{\overset{0}{-}}{(2)} \overset{\overset{1}{-}}{(5)}
 \overset{\overset{0}{+}}{(\gamma)}, \overset{\overset{0}{+}}{(1 \; 2)} \overset{\overset{0}{-}}{(\alpha)}\overset{\overset{0}{-}}{(\beta)}
\overset{\overset{1}{-}}{(5)}  \overset{\overset{0}{+}}{(\gamma)}, a.$$ So $d(x,a) \leq 6$. \\

It remains to take care of the possibility that exactly one of 1 and 2 lies in the transposition of $x$. Here, without loss of generality, we can assume $x$ contains $ \overset{\overset{\sigma}{+/-}}{(1 \; \alpha)}$ for $\alpha > 2$. If $x$ contains $\overset{\overset{\mu}{-}}{(2)}$ for $\mu$ odd, or $\overset{\overset{0}{+}}{(2)}$, then $x$ commutes with some $x'$ containing $\overset{\overset{\lambda}{-}}{(1)}\overset{\overset{0}{+}}{(2)}$, for even $\lambda$, and as shown earlier in this proof, $d(x',a) \leq 5$. Hence $d(x,a) \leq 6$. We may thus assume that $x = \overset{\overset{\sigma}{+/-}}{(1 \; \alpha)}\overset{\overset{\lambda}{-}}{(2)}\overset{\overset{\lambda'}{-}}{(\beta)}\overset{\overset{\mu}{-}}{(\gamma)}\overset{\overset{0}{+}}{(\delta)}$. If $\delta = 5$ then $x$ commutes with $x'$ given by $\overset{\overset{\sigma}{+/-}}{(1 \; \alpha)}\overset{\overset{\lambda}{-}}{(2)}\overset{\overset{\lambda'}{-}}{(\beta)}\overset{\overset{1}{-}}{(5)}\overset{\overset{0}{+}}{(\gamma)}$. By Theorem \ref{n=5} applied to $G_{\{1,2,3,4,6\}}$ we see that $d(x',a) \leq 5$ and so $d(x,a) \leq 6$. So we can assume that $\delta \neq 5$. Now $x$ is distance 4 from the element $b$ given by $b = \overset{\overset{0}{+}}{(1 \; 2)}\overset{\overset{0}{-}}{(\beta)}\overset{\overset{0}{-}}{(\delta)}\overset{\overset{1}{-}}{(\alpha)}\overset{\overset{0}{+}}{(\gamma)}$, as shown by the following path.
$$x = \overset{\overset{\sigma}{+/-}}{(1 \; \alpha)}\overset{\overset{\lambda}{-}}{(2)}\overset{\overset{\lambda'}{-}}{(\beta)}\overset{\overset{\mu}{-}}{(\gamma)}\overset{\overset{0}{+}}{(\delta)}, \overset{\overset{0}{-/+}}{(1 \; \alpha)}\overset{\overset{\lambda'}{-}}{(\delta)}\overset{\overset{\lambda'}{-}}{(\beta)}\overset{\overset{\mu}{-}}{(\gamma)}\overset{\overset{0}{+}}{(2)}, 
\overset{\overset{0}{+}}{(\beta \; \delta)}\overset{\overset{0}{-}}{(1)}\overset{\overset{0}{-}}{(\alpha)}\overset{\overset{\mu}{-}}{(\gamma)}\overset{\overset{0}{+}}{(2)}, 
\overset{\overset{0}{+}}{(\beta \; \delta)}\overset{\overset{0}{-}}{(1)}\overset{\overset{0}{-}}{(2)}\overset{\overset{\mu}{-}}{(\gamma)}\overset{\overset{0}{+}}{(\alpha)}, \overset{\overset{0}{+}}{(1 \; 2)}\overset{\overset{0}{-}}{(\beta)}\overset{\overset{0}{-}}{(\delta)}\overset{\overset{1}{-}}{(\alpha)}\overset{\overset{0}{+}}{(\gamma)} = b.$$
If $\alpha = 5$, then $d(b,a) = 1$ so $d(x,a) \leq 5$. If $\alpha = 6$, then $d(b,a) \leq 2$, so $d(x,a) \leq 6$. If $\gamma = 5$, then $d(b,a) \leq 2$ so $d(x,a) \leq 6$. Since interchanging 3 and 4 does not affect the distance from $a$, there is only one case left to deal with: $\alpha = 3$ and $\beta = 5$. So $x = \overset{\overset{\sigma}{+/-}}{(1 \;3)}\overset{\overset{\lambda}{-}}{(2)}\overset{\overset{\lambda'}{-}}{(5)}\overset{\overset{\mu}{-}}{(\gamma)}\overset{\overset{0}{+}}{(\delta)}$. But $x$ is the same distance from $a$ as $y = \overset{\overset{\sigma}{+/-}}{(2 \; 3)}\overset{\overset{\lambda}{-}}{(1)}\overset{\overset{\lambda'}{-}}{(5)}\overset{\overset{\mu}{-}}{(\gamma)}\overset{\overset{0}{+}}{(\delta)}$, and $y$ commutes with $z$ given by $z = \overset{\overset{\sigma'}{+/-}}{(1 \; 5)}\overset{\overset{\lambda}{-}}{(2)}\overset{\overset{\lambda''}{-}}{(3)}\overset{\overset{\mu}{-}}{(\gamma)}\overset{\overset{0}{+}}{(\delta)}$ for appropriate $\lambda''$ and $\sigma'$. By the `$\alpha = 5$' case above, $d(z,a) \leq 5$. Therefore $d(x,a) = d(y,a) \leq d(z,a) + 1 \leq 6$. This completes the proof. 
     \end{proof}
     
\paragraph{Proof of Theorems \ref{summary} and \ref{main}} 
Suppose $m=0$. If $l=0$ then \graph\ is disconnected by Theorem \ref{mainhalf}(i). Otherwise $\diam\graph\leq n+1$ by Theorem \ref{disc}. Suppose $m=1$ and $l=0$. If  one of $k_e$ and $k_o$ is 1 or the largest of $k_e$ and $k_o$ is 2, then \graph\ is disconnected by Theorem \ref{mainhalf} (ii), (iv) and (v). Otherwise $\diam\graph \leq n+2$ by Propositions \ref{m>0k=0}, \ref{mnonzerol=0}, Lemma \ref{kois2} and Theorem \ref{connectm=1l=0}. Suppose $m>1$ and $l=0$. If either of $k_e$ or $k_o$ is 1, then \graph\ is disconnected by Theorem \ref{mainhalf}(ii). Otherwise $\diam \graph \leq n$ by Propositions \ref{m>0k=0}, \ref{mnonzerol=0} and Theorem \ref{connectm=1l=0}. Finally suppose $m \geq 1$ and $l>0$. If $\max\{k_e,k_o,l\} = 1$ then \graph\ is disconnected by Theorem \ref{mainhalf}(iii). Otherwise $\diam\graph\leq n$ by Lemma \ref{n=5}, Theorem \ref{m>0l>0} and Lemma \ref{n=6}.  \qed

\section{Examples}\label{examples}

We first summarise the information on commuting involution graphs for $G_2$, $G_3$ and $G_4$. For fixed $a \in X$, the $i^{\mathrm{th}}$ disc $\Delta_i(a)$ is the set of elements of $X$ which are distance $i$ from $a$.  Since a length preserving graph automorphism interchanges classes with $k_e \geq k_o$ and those with $k_e \leq k_o$, we list here only those classes with $k_e \geq k_o$. In describing the elements we omit positive 1-cycles. In $G_2$ there are two connected graphs, both of diameter 2. If $a = \overset{\overset{0}{-}}{(1)}$ then $\Delta_1(a) = \{\overset{\overset{\lambda}{-}}{(2)}: \lambda \text{ even}\}$ with the remaining elements of $X$ comprising the second disc. If $a =  \overset{\overset{0}{+}}{(12)}$ then $\Delta_1(a) = \{\overset{\overset{\sigma}{-}}{(12)} : \sigma \in \zz\}$, with all other elements of $X$ lying in the second disc. Tables \ref{tableG3} and \ref{tableG4} give, for each connected graph and each disc, a list of orbit representatives under the action of $C_G(a)$. 
 We use $\lambda, \lambda'$ and so on to represent arbitrary even numbers, with $[\lambda]$ being an arbitrary nonzero even number. We use $\mu, \mu'$ and so on for arbitrary odd numbers, with $[\mu]$ being any odd number other than 1. Finally $\sigma$, $\sigma'$ and so on will be arbitrary integers with $[\sigma]$ an arbitrary nonzero integer. 
\begin{table}[h!]
  \begin{center}
  \begin{tabular}{|c|c|c|c|c|} \hline
 $a$ & $\Delta_1$ &  $\Delta_2$ & $\Delta_3$ & $\Delta_4$ \\
  \hline
   $\overset{\overset{0}{-}}{(1)}$ &  $\overset{\overset{\lambda}{-}}{(2)}$ & $\overset{\overset{[\lambda]}{-}}{(1)}$ & & \\
  \hline
   $\overset{\overset{0}{-}}{(1)}\overset{\overset{0}{-}}{(2)}$  & $\overset{\overset{0}{-}}{(1)}\overset{\overset{\lambda}{-}}{(3)}$ & $\overset{\overset{\lambda'}{-}}{(2)}\overset{\overset{\lambda}{-}}{(3)}$, $\overset{\overset{0}{-}}{(1)}\overset{\overset{[\lambda']}{-}}{(2)}$ & $\overset{\overset{[\lambda]}{-}}{(1)}\overset{\overset{[\lambda']}{-}}{(2)}$ & \\
   \hline
   $\overset{\overset{0}{-}}{(1)}\overset{\overset{1}{-}}{(2)}$ &  $\overset{\overset{0}{-}}{(1)}\overset{\overset{\mu}{-}}{(3)}$, $\overset{\overset{1}{-}}{(2)}\overset{\overset{\lambda}{-}}{(3)}$ &  $\overset{\overset{0}{-}}{(1)}\overset{\overset{\mu}{-}}{(2)}$,
   $\overset{\overset{\lambda}{-}}{(2)}\overset{\overset{\mu}{-}}{(3)}$,
   $\overset{\overset{\lambda}{-}}{(1)}\overset{\overset{1}{-}}{(2)}$,
   $\overset{\overset{\mu}{-}}{(1)}\overset{\overset{\lambda}{-}}{(3)}$ 
    &  $\overset{\overset{\mu}{-}}{(1)}\overset{\overset{\lambda}{-}}{(2)}$,  $\overset{\overset{[\lambda]}{-}}{(1)}\overset{\overset{\mu}{-}}{(3)}$,  $\overset{\overset{[\mu]}{-}}{(2)}\overset{\overset{\lambda}{-}}{(3)}$ & 
     $\overset{\overset{[\lambda]}{-}}{(1)}\overset{\overset{[\mu]}{-}}{(2)}$\\ \hline
  \end{tabular}\caption{Connected Graphs for $G_3$ (with $k_e \geq k_o$)}\label{tableG3}
 \end{center} \end{table}

 \newcommand{\gappys}{\hspace*{-3mm}}
 \newcommand{\gappy}{\hspace*{-4mm}}
 
\begin{table}[h!]
  \begin{center}
   \begin{tabular}{|c|c|c|c|c|c|}
 \hline $a$ & $\Delta_1$ &  $\Delta_2$ & $\Delta_3$ & $\Delta_4$ & $\Delta_5$ \\
   \hline
   $\overset{\overset{0}{-}}{(1)}$ &  $\overset{\overset{\lambda}{-}}{(2)}$ & $\overset{\overset{[\lambda]}{-}}{(1)}$ & & & \\     \hline
$\overset{\overset{0}{-}}{(1)}\overset{\overset{0}{-}}{(2)}$ & $\overset{\overset{0}{-}}{(1)}\overset{\overset{\lambda}{-}}{(3)}$,  $\overset{\overset{\lambda}{-}}{(3)}\overset{\overset{\lambda'}{-}}{(4)}$ & $X \setminus (\Delta_1(a) \cup \{a\})$ & & & \\   
         \hline
$\overset{\overset{0}{-}}{(1)}\overset{\overset{0}{-}}{(2)}\overset{\overset{0}{-}}{(3)}$  & $\overset{\overset{0}{-}}{(1)}\overset{\overset{0}{-}}{(2)}\overset{\overset{\lambda}{-}}{(4)}$ & \gappys $\overset{\overset{0}{-}}{(1)}\overset{\overset{\lambda}{-}}{(3)}\overset{\overset{\lambda'}{-}}{(4)}$, $\overset{\overset{0}{-}}{(1)}\overset{\overset{0}{-}}{(2)}\overset{\overset{[\lambda]}{-}}{(3)}$ \gappy & \gappys $\overset{\overset{\lambda}{-}}{(2)}\overset{\overset{\lambda'}{-}}{(3)}\overset{\overset{\lambda''}{-}}{(4)}$, $\overset{\overset{0}{-}}{(1)}\overset{\overset{[\lambda]}{-}}{(2)}\overset{\overset{[\lambda']}{-}}{(3)}$ \gappy & \gappys $\overset{\overset{[\lambda]}{-}}{(1)}\overset{\overset{[\lambda']}{-}}{(2)}\overset{\overset{[\lambda'']}{-}}{(3)}$ \gappy & \\   
             \hline
             $\overset{\overset{0}{-}}{(1)}\overset{\overset{1}{-}}{(2)}$&
\hspace*{-2mm} $\overset{\overset{0}{-}}{(1)}\overset{\overset{\mu}{-}}{(3)}$   $\overset{\overset{1}{-}}{(2)}\overset{\overset{\lambda}{-}}{(3)}$, $\overset{\overset{\lambda}{-}}{(3)}\overset{\overset{\mu}{-}}{(4)}$ \hspace*{-2mm} & $X \setminus (\Delta_1(a) \cup \{a\})$& & &  \\   
                 \hline
$\overset{\overset{0}{-}}{(1)}\overset{\overset{0}{-}}{(2)}\overset{\overset{1}{-}}{(3)}$ &
\hspace*{-2mm} $\overset{\overset{0}{-}}{(1)}\overset{\overset{0}{-}}{(2)}\overset{\overset{\mu}{-}}{(4)}$,
$\overset{\overset{0}{-}}{(1)}\overset{\overset{1}{-}}{(3)}\overset{\overset{\lambda}{-}}{(4)}$ \hspace*{-2mm}
 & \begin{tabular}{c} \gappys
 $\overset{\overset{0}{-}}{(1)}\overset{\overset{\lambda}{-}}{(3)}\overset{\overset{\mu}{-}}{(4)}$,
 $\overset{\overset{0}{-}}{(1)}\overset{\overset{0}{-}}{(2)}\overset{\overset{[\mu]}{-}}{(3)}$\gappy \\ \gappys
 $\overset{\overset{0}{-}}{(1)}\overset{\overset{\mu}{-}}{(2)}\overset{\overset{\lambda}{-}}{(4)}$,
 $\overset{\overset{[\lambda]}{-}}{(1)}\overset{\overset{1}{-}}{(3)}\overset{\overset{\lambda'}{-}}{(4)}$ \gappy\\ \gappys
 $\overset{\overset{[\lambda]}{-}}{(1)}\overset{\overset{0}{-}}{(2)}\overset{\overset{1}{-}}{(3)}$ \gappy \end{tabular}& 
\begin{tabular}{c}
 \gappys $\overset{\overset{0}{-}}{(1)}\overset{\overset{\mu}{-}}{(2)}\overset{\overset{\lambda}{-}}{(3)}$, 
$\overset{\overset{[\lambda]}{-}}{(1)}\overset{\overset{[\lambda']}{-}}{(2)}\overset{\overset{1}{-}}{(3)}$ \gappy\\ \gappys
$\overset{\overset{[\lambda]}{-}}{(1)}\overset{\overset{\mu}{-}}{(2)}\overset{\overset{\lambda'}{-}}{(4)}$,
$\overset{\overset{[\lambda]}{-}}{(1)}\overset{\overset{0}{-}}{(2)}\overset{\overset{\mu}{-}}{(4)}$ \gappy\\ \gappys
$\overset{\overset{0}{-}}{(1)}\overset{\overset{[\mu]}{-}}{(3)}\overset{\overset{\lambda}{-}}{(4)}$,
$\overset{\overset{\mu}{-}}{(1)}\overset{\overset{\lambda}{-}}{(3)}\overset{\overset{\lambda'}{-}}{(4)}$ \gappy \\ \gappys
$\overset{\overset{[\lambda]}{-}}{(1)}\overset{\overset{\lambda'}{-}}{(3)}\overset{\overset{\mu}{-}}{(4)}$	\gappy \end{tabular}
& 
\begin{tabular}{c} \gappys
$\overset{\overset{0}{-}}{(1)}\overset{\overset{[\lambda]}{-}}{(2)}\overset{\overset{[\mu]}{-}}{(3)}$ \gappy \\ \gappys
$\overset{\overset{[\lambda]}{-}}{(1)}\overset{\overset{\mu}{-}}{(2)}\overset{\overset{\lambda'}{-}}{(3)}$\gappy \\ \gappys
$\overset{\overset{[\lambda]}{-}}{(1)}\overset{\overset{[\lambda']}{-}}{(2)}
\overset{\overset{\mu}{-}}{(4)}$\gappy \\ \gappys
$\overset{\overset{[\lambda]}{-}}{(1)}\overset{\overset{[\mu]}{-}}{(3)}\overset{\overset{\lambda'}{-}}{(4)}$ \gappy
\end{tabular}& \hspace*{-1mm}$\overset{\overset{[\lambda]}{-}}{(1)}\overset{\overset{[\lambda']}{-}}{(2)}
\overset{\overset{[\mu]}{-}}{(3)}$\hspace*{-1mm}\\   
                     \hline
 $\overset{\overset{0}{+}}{(12)}\overset{\overset{0}{+}}{(34)}$  & \hspace*{-5mm} \begin{tabular}{cc} \gappys $\overset{\overset{\sigma}{-}}{(12)}\overset{\overset{0}{+}}{(34)}$, $\overset{\overset{\sigma}{-}}{(12)}\overset{\overset{\sigma'}{-}}{(34)}$\gappy\\ \gappys $\overset{\overset{\sigma}{+}}{(13)}\overset{\overset{\sigma}{+}}{(24)}$, $\overset{\overset{\sigma}{-}}{(13)}\overset{\overset{\sigma}{-}}{(24)}$\gappy \end{tabular} \hspace*{-5mm} & $X \setminus (\Delta_1(a) \cup \{a\})$  &  & & \\   
       \hline
  \end{tabular}\caption{Connected Graphs for $G_4$ (with $k_e \geq k_o$)}\label{tableG4}
   \end{center} \end{table}

 Figure \ref{l0} is the collapsed adjacency graph for $a = \overset{\overset{0}{+}}{(12)}
 \overset{\overset{0}{-}}{(3)}\overset{\overset{0}
 	{-}}{(4)}\overset{\overset{0}{-}}{(5)}$ in $G_5$, which has diameter 5.
 In the graph $\sigma$ is an arbitrary integer, $[\sigma]$ is an arbitrary non-zero integer and $\lambda, \lambda'$ and  $\lambda''$ are arbitrary even integers. For simplicity we have only included shortest paths -- that is, we have omitted edges between nodes in the same disc.

  \begin{figure}[h!]
  \includegraphics[width=0.9\textwidth]{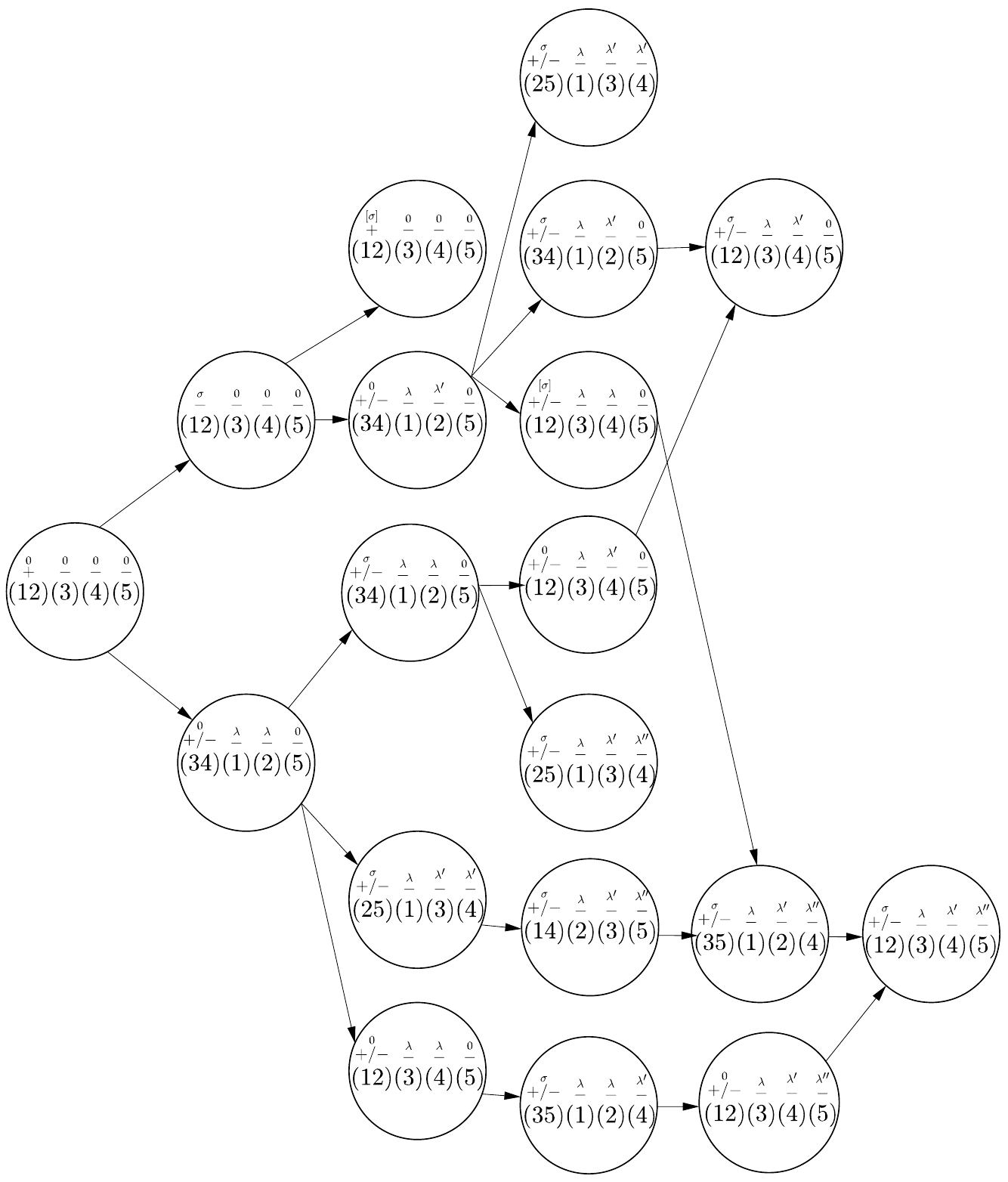}
   \caption{$n=5$, $k=3$ and $m=1$} \label{l0}
   \end{figure}

  \afterpage{\clearpage}

 \newpage

  \end{document}